\providecommand{\keywords}[1]{\textbf{Keywords.} #1}
\providecommand{\MSC}[1]{\textbf{2010 Mathematics Subject Classification.} #1}
\newtheorem{theorem}{Theorem}[section]
\newtheorem{corollary}[theorem]{Corollary}
\newtheorem{proposition}[theorem]{Proposition}
\newtheorem{example}[theorem]{Example}
\theoremstyle{definition}
\newtheorem{definition}[theorem]{Definition}
\newtheorem{remark}[theorem]{Remark}
\renewcommand\epsilon{\varepsilon}
\renewcommand\mapsto{\longmapsto}
\newcommand{\R}{\field{R}\xspace}
\newcommand{\C}{\field{C}\xspace}
\newcommand{\N}{\field{N}\xspace}
\newcommand{\field}[1]{\ensuremath{\mathbb{#1}}}
\newcommand{\ens}[1]{ \left\{#1\right\} }
\newcommand\diag{\mathrm{diag} \xspace}
\newcommand{\Tinf}{T_{\mathrm{inf}}}
\newcommand\infT{T_{\mathrm{inf}}}
\newcommand\supT{T_{\mathrm{sup}}}
\newcommand\TRus{T_{\mathrm{[Rus]}}(\Lambda)}
\newcommand\TCN{T_{\mathrm{[CN]}}(\Lambda)}
\newcommand\pt[1]{\frac{\partial #1}{\partial t}}
\newcommand\px[1]{\frac{\partial #1}{\partial x}}
\newcommand\pxi[1]{\frac{\partial #1}{\partial \xi}}
\newcommand\pas[1]{\frac{\partial #1}{\partial s}}
\newcommand\Tau{\mathcal{T}}
\newcommand\ssin{s^{\mathrm{in}}}
\newcommand\ssout{s^{\mathrm{out}}}
\newcommand\ssinb{\bar{s}^{\mathrm{in}}}
\newcommand{\rank}{\mathrm{rank} \,}
\newcommand\Id{\mathrm{Id}}
\newcommand{\lin}[1]{\mathcal{L}(#1)}
\newcommand{\dom}[1]{ D(#1) }
\def\norm#1{\left\|#1\right\|}
\newcommand\st{\quad \middle| \quad}
\newcommand\equi{\, \sim \,}
\newcommand{\ps}[3]{ {\left\langle #1 , #2 \right\rangle}_{#3} }
\newcommand\ddt{\frac{d}{dt}}
\newcommand\setCN{\mathcal{B}}
\newcommand\critset{\Sigma}
\def\ds{\displaystyle}
\newcommand\dds{\frac{d}{ds}}
\newcommand{\syst}[2]{
\ifthenelse{\equal{#2}{}}{\left(\Lambda,#1,Q\right)}
{\ifthenelse{\equal{#2}{b}}{\left(\Lambda,-,Q,#1\right)}{}}
{\ifthenelse{\equal{#2}{c}}{\left(\Lambda,-,Q^0,#1\right)}{}}
}
\title{Equivalent one-dimensional first-order linear hyperbolic systems and range of the minimal null control time with respect to the internal coupling matrix}
\author{
Long Hu\thanks{School of Mathematics, Shandong University, Jinan, Shandong 250100, China.  E-mail: \texttt{hul@sdu.edu.cn}}
\and
Guillaume Olive\thanks{Faculty of Mathematics and Computer Science, Jagiellonian University, ul. {\L}ojasiewicza 6, 30-348 Krak\'{o}w, Poland. E-mail: \texttt{math.golive@gmail.com} or \texttt{guillaume.olive@uj.edu.pl}}
}
\date{September 16, 2021}
\begin{document}

\maketitle

\begin{abstract}
In this paper, we are interested in the minimal null control time of one-dimensional first-order linear hyperbolic systems by one-sided boundary controls.
Our main result is an explicit characterization of the smallest and largest values that this minimal null control time can take with respect to the internal coupling matrix.
In particular, we obtain a complete description of the situations where the minimal null control time is invariant with respect to all the possible choices of internal coupling matrices.
The proof relies on the notion of equivalent systems, in particular the backstepping method, a canonical $LU$-decomposition for boundary coupling matrices and a compactness-uniqueness method adapted to the null controllability property.
\end{abstract}

\keywords{Hyperbolic systems, Boundary controllability, Minimal null control time, Equivalent systems, Backstepping method, $LU$-decomposition, Compactness-uniqueness method}

\vspace{0.2cm}
\MSC{35L40, 93B05}


\section{Introduction and main result}

\subsection{Problem description}

In this article we are interested in the null controllability properties of the following class of one-dimensional first-order linear hyperbolic systems, which appears for instance in linearized Saint-Venant equations and many other physical models of balance laws (see e.g. \cite[Chapter 1]{BC16} and many references therein):

\begin{equation}\label{syst}
\begin{dcases}
\pt{y}(t,x)+\Lambda(x) \px{y}(t,x)=M(x) y(t,x), \\
y_-(t,1)=u(t), \quad y_+(t,0)=Qy_-(t,0),  \\
y(0,x)=y^0(x).
\end{dcases}
\end{equation}

In \eqref{syst}, $t>0$, $x \in (0,1)$, $y(t,\cdot)$ is the state at time $t$, $y^0$ is the initial data and $u(t)$ is the control at time $t$.
We denote by $n\geq 2$ the total number of equations of the system.
The matrix $\Lambda \in C^{0,1}([0,1])^{n \times n}$ is assumed to be diagonal:
\begin{equation}\label{Lambda diag}
\Lambda =\diag(\lambda_1,\ldots,\lambda_n),
\end{equation}
with $m \geq 1$ negative speeds and $p \geq 1$ positive speeds ($m+p=n$)  such that:
\begin{equation}\label{hyp speeds}
\lambda_1(x)<\cdots<\lambda_m(x) <0<\lambda_{m+1}(x)<\cdots<\lambda_{m+p}(x), \quad \forall x \in [0,1].
\end{equation}
Finally, the matrix $M \in L^{\infty}(0,1)^{n \times n}$ couples the equations of the system inside the domain and the constant matrix $Q \in \R^{p \times m}$ couples the equations of the system on the boundary $x=0$.

All along this paper, for a vector (or vector-valued function) $v \in \R^n$ and a matrix (or matrix-valued function) $A \in \R^{n \times n}$, we use the notation
$$
v=\begin{pmatrix} v_- \\ v_+ \end{pmatrix},
\quad
A=\begin{pmatrix} A_{--} & A_{-+} \\ A_{+-} & A_{++} \end{pmatrix},
$$
where $v_- \in \R^m$, $v_+ \in \R^{p}$ and $A_{--} \in \R^{m \times m}$, $A_{-+} \in \R^{m \times p}$, $A_{+-} \in \R^{p \times m},$ $A_{++} \in \R^{p \times p}$.

We recall that the system \eqref{syst} is well posed in $(0,T)$ for every $T>0$: for every $y^0 \in L^2(0,1)^n$ and $u \in L^2(0,T)^m$, there exists a unique solution
$$y \in C^0([0,T];L^2(0,1)^n) \cap C^0([0,1];L^2(0,T)^n)$$
to the system \eqref{syst}.
By solution we mean ``solution along the characteristics'', this will be detailed in Section \ref{sect sol char} below.

The regularity $C^0([0,T];L^2(0,1)^n)$ of the solution allows us to consider control problems in the space $L^2(0,1)^n$:

\begin{definition}
Let $T>0$.
We say that the system \eqref{syst} is:
\begin{itemize}
\item
\textbf{exactly controllable in time $T$} if, for every $y^0,y^1 \in L^2(0,1)^n$, there exists $u \in L^2(0,T)^m$ such that the corresponding solution $y$ to the system \eqref{syst} in $(0,T)$ satisfies $y(T,\cdot)=y^1$.

\item
\textbf{null controllable in time $T$} if the previous property holds at least for $y^1=0$.
\end{itemize}
\end{definition}

Clearly, exact controllability implies null controllability, but the converse is not true in general.

These notions also depend on the time $T$ and, since controllability in time $T_1$ implies controllability in time $T_2$ for every $T_2 \geq T_1$, it is natural to try to find the smallest possible control time, the so-called ``minimal control time''.
This problem was recently completely solved in \cite{HO21} for the notion of exact controllability and we will investigate here what happens for the null controllability.

\begin{definition}
For any $\Lambda,M$ and $Q$ as above, we denote by $\Tinf\syst{M}{} \in [0,+\infty]$ the minimal null control time of the system \eqref{syst}, that is
$$
\Tinf\syst{M}{}=
\inf\ens{T>0 \st \text{the system \eqref{syst} is null controllable in time $T$}}.
$$
\end{definition}

The time $\Tinf\syst{M}{}$ is named ``minimal'' null control time according to the current literature, despite it is not always a minimal element of the set.
We keep this naming here, but we use the notation with the ``inf'' to avoid eventual confusions.
The time $\Tinf\syst{M}{} \in [0,+\infty]$ is thus the unique time that satisfies the following two properties:
\begin{itemize}
\item
If $T>\Tinf\syst{M}{}$, then the system \eqref{syst} is null controllable in time $T$.
\item
If $T<\Tinf\syst{M}{}$, then the system \eqref{syst} is not null controllable in time $T$.
\end{itemize}

Finally, let us introduce the elementary times $T_1(\Lambda), \ldots,T_n(\Lambda)$ defined by
\begin{equation}\label{comp Ti}
T_i(\Lambda)=
\begin{dcases}
\int_0^1 \frac{1}{-\lambda_i(\xi)} \, d\xi & \text{ if } i \in \ens{1,\ldots,m}, \\
\int_0^1 \frac{1}{\lambda_i(\xi)} \, d\xi & \text{ if } i \in \ens{m+1,\ldots,n}.
\end{dcases}
\end{equation}
For the rest of this article it is important to keep in mind that the assumption \eqref{hyp speeds} implies the following order relation among $T_i(\Lambda)$:
\begin{equation}\label{order times}
\begin{dcases}
T_1(\Lambda) \leq \cdots \leq T_m(\Lambda), \\
T_{m+p}(\Lambda) \leq \cdots \leq T_{m+1}(\Lambda).
\end{dcases}
\end{equation}

\subsection{The $LCU$ decomposition}

An important feature of the present article is that no assumption will be required on the boundary coupling matrices $Q$.
To be able to handle such a general case and state our main result we introduce a notion of canonical form.

\begin{definition}\label{def canon}
We say that a matrix $Q^0 \in \R^{p \times m}$ is in canonical form if either $Q^0=0$ or there exist an integer $\rho \geq 1$, row indices $r_1,\ldots,r_{\rho} \in \ens{1,\ldots,p}$ with $r_1<\cdots<r_{\rho}$ and distinct column indices $c_1, \ldots, c_{\rho} \in \ens{1,\ldots,m}$ such that
$$
\begin{dcases}
q^0_{ij}=1 & \text{ if } (i,j) \in \ens{(r_1,c_1), \ldots, (r_{\rho},c_{\rho})},
\\
q^0_{ij}=0 & \text{ otherwise. }
\end{dcases}
$$
For $Q^0=0$ we set $\rho=0$ for convenience.
\end{definition}

Note that we necessarily have $\rho=\rank Q^0$.

\begin{example}\label{ex Qzero}
The matrices
$$
Q_1^0=
\begin{pmatrix}
0 & \fbox{$1$} & 0 \\
0 & 0 & \fbox{$1$} \\
0 & 0 & 0 \\
\fbox{$1$} & 0 & 0
\end{pmatrix},
\quad
Q_2^0=
\begin{pmatrix}
\fbox{$1$} & 0 & 0 & 0 \\
0 & \fbox{$1$} & 0 & 0 \\
0 & 0 & 0 & 0 \\
0 & 0 & \fbox{$1$} & 0
\end{pmatrix}.
$$
are both in canonical form, with
$$
\begin{array}{c}
\text{for $Q_1^0$: } \quad
(r_1,c_1)=(1,2), \quad
(r_2,c_2)=(2,3), \quad
(r_3,c_3)=(4,1),
\\
\text{for $Q_2^0$: } \quad
(r_1,c_1)=(1,1), \quad
(r_2,c_2)=(2,2), \quad
(r_3,c_3)=(4,3).
\end{array}
$$
\end{example}

Using the Gaussian elimination we can transform any matrix into a canonical form, this is what we will call in this article the ``$LCU$ decomposition'' (for Lower--Canonical--Upper decomposition).
More precisely, we have

\begin{proposition}\label{Gauss elim}
For every $Q \in \R^{p \times m}$, there exists a unique $Q^0 \in \R^{p \times m}$ such that the following two properties hold:
\begin{enumerate}[(i)]
\item
There exists an upper triangular matrix $U \in \R^{m \times m}$ with only ones on its diagonal and there exists an invertible lower triangular matrix $L \in \R^{p \times p}$ such that
$$LQU=Q^0.$$

\item
$Q^0$ is in canonical form.
\end{enumerate}
We call $Q^0$ the canonical form of $Q$.
\end{proposition}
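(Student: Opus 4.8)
The plan is to run a constrained Gaussian elimination on $Q$, using only the allowed operations — right-multiplication by an upper-triangular unipotent $U$ (column operations that add a multiple of column $j$ to column $j'$ with $j<j'$), and left-multiplication by an invertible lower-triangular $L$ (row operations that scale rows, and add a multiple of row $i$ to row $i'$ with $i<i'$) — and to show this produces a matrix in canonical form, then to prove uniqueness separately. First I would set up the elimination inductively by scanning rows from top ($i=1$) to bottom ($i=p$). Having processed rows $1,\ldots,i-1$ and recorded the pivot columns $c_1<\cdots$ used so far among a set $C$, look at row $i$ restricted to the columns \emph{not} in $C$: if it is entirely zero, move to row $i+1$; otherwise let $c$ be the \emph{smallest} column index not in $C$ where row $i$ has a nonzero entry, scale row $i$ (lower-triangular, diagonal entry) to make $q_{ic}=1$, then use column operations (adding multiples of column $c$ to columns $c'>c$, which is upper-triangular since we always eliminate to the right) to kill all other entries of row $i$ in columns $>c$ that are not in $C$ — and crucially, also argue the entries in columns of $C$ that lie to the right of $c$ get cleared too, or were already zero. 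Then use lower-triangular row operations (adding multiples of row $i$ to rows $i'>i$) to zero out column $c$ below row $i$. Add $c$ to $C$, record $(r,c)=(i,c)$ with $r=i$, and continue. At the end, the pivot rows are automatically increasing (we scanned top-down), the pivot columns are distinct by construction, every pivot entry is $1$, and everything else is $0$: that is exactly Definition 1.4. One must check that the column operations needed to clean row $i$ are genuinely upper-triangular — this holds because we choose $c$ minimal among the available columns, so all entries to be killed sit in columns strictly to the right of $c$ — and that cleaning row $i$ does not disturb the already-finished rows $1,\ldots,i-1$: adding a multiple of column $c$ to column $c'>c$ only changes entries in those rows in columns $c'$, which are either previously-set zeros or, if $c'$ is a pivot column, need a short separate argument that the pivot structure forces the relevant entries to vanish. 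The cleanest bookkeeping is probably to prove by induction the invariant ``after step $i$, the submatrix on rows $1,\ldots,i$ is in canonical form and columns in $C$ have their only nonzero entries at the recorded pivots,'' which makes the non-disturbance automatic.

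The main obstacle is uniqueness, not existence. Suppose $L_1 Q U_1 = Q_1^0$ and $L_2 Q U_2 = Q_2^0$ are both in canonical form; then $Q_2^0 = L Q_1^0 U$ with $L=L_2L_1^{-1}$ invertible lower-triangular and $U=U_1^{-1}U_2$ upper-triangular unipotent, so it suffices to show: if $Q_2^0 = L Q_1^0 U$ with $L,U$ of the stated types and both $Q_i^0$ canonical, then $Q_1^0=Q_2^0$. I would prove this by comparing the two canonical forms column by column, or better, via a rank/leading-entry argument: for a matrix $A$, the data $(\rho,\{(r_k,c_k)\})$ of its canonical form is determined by the sequence of ranks of the top-left-anchored submatrices, which are invariants under left multiplication by lower-triangular invertibles and right multiplication by upper-triangular unipotents. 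Concretely, for indices $a\in\{0,\ldots,p\}$ and $b\in\{0,\ldots,m\}$ consider $\rank$ of the submatrix of $A$ on rows $\{1,\ldots,a\}$ and columns $\{b+1,\ldots,m\}$ (the ``lower-left-justified'' windows, reflecting that $U$ acts on the right by upper-triangular and $L$ on the top by lower-triangular); I would verify these numbers are unchanged when $A\mapsto LAU$, and that for a canonical $Q^0$ these rank numbers recover the pivot set uniquely — each new pivot $(r_k,c_k)$ is exactly the position where one of these windowed ranks jumps. Since $Q_1^0$ and $Q_2^0$ have identical windowed-rank profiles, they have identical pivot sets, hence are equal. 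The delicate point in this argument is choosing the \emph{right} family of submatrices so that it is simultaneously invariant under both one-sided operations and fine enough to pin down the canonical form; getting the directions (left vs. right, rows-from-top vs. columns-from-right) consistent with the upper/lower triangular constraints is where care is needed, and it is essentially forced by which entries the triangular operations are allowed to modify.
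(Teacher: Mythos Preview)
Your existence argument via constrained Gaussian elimination is essentially what the paper does: the paper only sketches it through a verbal description (rightward column substitutions for $U$, then downward row substitutions and normalization for $L$) and worked examples. Your interleaved row/column version, organized around the running invariant ``columns in $C$ have their only nonzero entries at the recorded pivots,'' is a clean way to handle the bookkeeping and is correct; note that with this invariant the entries of row $i$ lying in previous pivot columns are \emph{already zero} (from the earlier downward row clears), so the hedge ``get cleared too, or were already zero'' resolves to the latter.

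For uniqueness the paper gives no argument and simply cites \cite{DJM06} and \cite[Appendix A]{HO21}, so your rank-invariant approach is more explicit than what the paper provides. However, your specific choice of windows is wrong: the rank of the submatrix on rows $\{1,\ldots,a\}$ and columns $\{b+1,\ldots,m\}$ is \emph{not} invariant under right multiplication by an upper-triangular unipotent $U$. Take $A=\begin{pmatrix}1&0\end{pmatrix}$ and $U=\left(\begin{smallmatrix}1&1\\0&1\end{smallmatrix}\right)$: the rank of column $\{2\}$ jumps from $0$ to $1$. The correct windows are the \emph{top-left} blocks, rows $\{1,\ldots,a\}$ and columns $\{1,\ldots,b\}$: right multiplication by upper-triangular $U$ preserves the span of the first $b$ columns (each new column $j'\le b$ is a combination of old columns $\le j'$), and left multiplication by lower-triangular invertible $L$ preserves the span of the first $a$ rows. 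For a canonical $Q^0$ this rank equals $\#\{k:r_k\le a,\ c_k\le b\}$, and the pivot positions are recovered via the second difference $f(a,b)-f(a-1,b)-f(a,b-1)+f(a-1,b-1)$. With this one correction your uniqueness argument goes through; you correctly flagged the choice of windows as the delicate point, you just had the column direction reversed.
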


We mention that, because of possible zero rows or columns of $Q^0$, the matrices $L$ and $U$ are in general not unique.

With this proposition, we can extend the definition of the indices $(r_1,c_1), \ldots, (r_{\rho},c_{\rho})$ to any nonzero matrix:

\begin{definition}\label{def indices}
For any nonzero matrix $Q \in \R^{p \times m}$, we denote by $(r_1,c_1), \ldots, (r_{\rho},c_{\rho})$ the positions of the nonzero entries of its canonical form ($r_1<\cdots<r_{\rho}$).
\end{definition}

As previously mentioned, the existence of the $LCU$ decomposition is a direct consequence of the Gaussian elimination, where the matrix $U$ corresponds to rightward column substitutions, whereas the matrix $L$ corresponds to downward row substitutions and then normalization to $1$ of the remaining nonzero entries.
Let us present some examples which will make this point clearer.

\begin{example}\label{ex Q}
We illustrate how to find the decomposition of Proposition \ref{Gauss elim} in practice.
Consider
$$
Q_1=
\begin{pmatrix}
0 & 1 & 2 \\
0 & 2 & 5 \\
0 & 1 & 2 \\
4 & -4 & 4
\end{pmatrix},
\quad
Q_2=
\begin{pmatrix}
1 & 1 & -1 & 2 \\
3 & 5 & -1 & 8 \\
0 & 1 & 1 & 1 \\
-1 & 3 & 6 & 4
\end{pmatrix}.
$$
Let us deal with $Q_1$ first.
We look at the first row, we take the first nonzero entry as pivot.
We remove the entries to the right on the same row by doing the column substitution $C_3 \leftarrow  C_3 -2C_2$, which gives
\begin{equation}\label{step ck=m}
Q_1U_1=
Q_1
\begin{pmatrix}
1 & 0 & 0 \\
0 & 1 & -2 \\
0 & 0 & 1 
\end{pmatrix}
=
\begin{pmatrix}
0 & 1 & 0 \\
0 & 2 & 1 \\
0 & 1 & 0 \\
4 & -4 & 12
\end{pmatrix}
.
\end{equation}
We now look at the next row and take as new pivot the first nonzero entry that is not in the column of the previous pivot, that is, not in $C_2$.
Since there is no entry to the right of this new pivot, there is nothing to do and we move to the next row.
Since this next row has no nonzero element which is not in $C_2,C_3$, we move again to the next and last row.
We take as new pivot the first nonzero entry that is not in $C_2$ or $C_3$ and we remove the entries to the right on the same row by doing the column substitutions $C_2 \leftarrow  C_2 +C_1$ and $C_3 \leftarrow C_3-3C_1$, which gives
$$
Q_1U_1U_2=
Q_1U_1
\begin{pmatrix}
1 & 1 & -3 \\
0 & 1 & 0 \\
0 & 0 & 1 
\end{pmatrix}
=
\begin{pmatrix}
0 & 1 & 0 \\
0 & 2 & 1 \\
0 & 1 & 0 \\
4 & 0 & 0
\end{pmatrix}
.
$$
Working then on the rows with downward substitutions only (starting with the first row) and finally normalizing to $1$ the remaining nonzero entries, we see that $Q_1$ becomes $Q_1^0$ of Example \ref{ex Qzero}.
Similarly, it can be checked that the canonical form of $Q_2$ is in fact $Q_2^0$ of Example \ref{ex Qzero}.
\end{example}

\begin{remark}
Observe that we only need to compute the matrix $U$ in order to find the indices $(r_1,c_1), \ldots, (r_{\rho},c_{\rho})$.
\end{remark}

The uniqueness of the $LCU$ decomposition is less straightforward and we refer for instance to the arguments in the proof of \cite[Theorem 1]{DJM06} or to \cite[Appendix A]{HO21} for a proof.

\begin{remark}
In the Gaussian elimination process described above, we absolutely do not want to perform any permutation of the rows.
This is because we have ordered the speeds of our system in a particular way (recall \eqref{hyp speeds}).
The fact that we use right multiplication by upper triangular matrices and left multiplication by lower triangular matrices is also dictated by this choice of order (for instance in \cite{HO21} the speeds were ordered differently and right multiplication by lower triangular matrices was considered instead).
\end{remark}

\subsection{Literature}\label{sect lit}

Boundary controllability of one-dimensional first-order hyperbolic systems has been widely investigated since the late 1960s.
Two pioneering works are \cite{Rus67} and the celebrated survey paper \cite{Rus78}, in which the author established the null controllability of the system \eqref{syst} in any time $T \geq \TRus$, where $\TRus$ is the sum of the largest transport times from opposite directions, that is,
$$\TRus=T_{m+1}(\Lambda)+T_m(\Lambda).$$
An important feature of this result is that it is valid whatever are the internal and boundary coupling matrices $M$ and $Q$.
In other words, the time $\TRus$ gives an upper bound for the minimal null control time $\Tinf(\Lambda,M,Q)$ with respect to these matrices.

In general, no better time can be expected.
More precisely, it is easy to see that there exist matrices $M$ and $Q$ such that $\Tinf(\Lambda,M,Q)=\TRus$ (simply take $M=0$ and $Q$ the matrix whose entries are all equal to zero except for $q_{1,m}=1$).
However, for most of the matrices $M$ and $Q$, this upper bound is too large.
Indeed, by just slightly restricting the class of such matrices (in particular, for $Q$), it is possible to have a strictly better upper bound than $\TRus$.

This fact was already observed in \cite{Rus78}, where the author tried to find the minimal null control time in the particular case of conservation laws ($M=0$), rightly by looking more closely at the properties of the boundary coupling matrix.
He could not solve this problem though and he left it as an open problem (\cite[Remark p. 656]{Rus78}).
This was eventually solved few years later in \cite{Wec82}, where the author gave an explicit expression of the minimal null control time in terms of some indices related to $Q$.

Concerning systems of balance laws ($M \neq 0$), finding the minimal null control time for arbitrary $M$ and $Q$ is still an open challenging problem.
Recently, there has been a resurgence on the characterization of such a time.
A first result in this direction has been obtained in \cite{CN19} with an improvement of the upper bound $\TRus$ for a certain class of boundary coupling matrices $Q$.
More precisely, they considered the class $\mathcal{B}$ defined by
\begin{equation}\label{def setCN}
\setCN=
\ens{Q \in \R^{p \times m} \st \text{\eqref{cond CN19} is satisfied for every $i \in \ens{1,\ldots,\min\ens{p,m-1}}$}},
\end{equation}
where the condition \eqref{cond CN19} is:
\begin{equation}\label{cond CN19}
\text{the $i \times i$ matrix formed from the first $i$ rows and the first $i$ columns of $Q$ is invertible,}
\end{equation}
(it is understood that the set $\setCN$ is empty when $m=1$).
For this class of boundary coupling matrices, the authors then showed that the upper bound $\TRus$ can be reduced to the time $\TCN$ defined by
\begin{equation}\label{def TCN}
\TCN=
\begin{dcases}
\max \ens{ \max_{k \in \ens{1,\ldots,p}} T_{m+k}(\Lambda)+T_k(\Lambda), \quad T_{m}(\Lambda)} & \text{ if } m \geq p,
\\
\max_{k \in \ens{1,\ldots,m}} T_{m+k}(\Lambda)+T_k(\Lambda) & \text{ if } m<p.
\end{dcases}
\end{equation}
This was first done for some generic internal coupling matrices or under rather stringent conditions (\cite[Theorem 1.1 and 1.5]{CN19}) but the same authors were then able to remove these restrictions in \cite[Theorem 1 and 3]{CN21}.

On the other hand, when the boundary coupling matrix $Q$ is full row rank, the problem of finding the minimal null control time, and not only an upper bound, has also been recently completely solved in \cite{HO21}.
More precisely, it is proved in \cite[Theorem 1.12 and Remark 1.3]{HO21} that
$$
\rank Q=p
\quad \Longrightarrow \quad
\Tinf(\Lambda,M,Q)=
\max\ens{
\max_{k \in \ens{1,\ldots,p}}
T_{m+k}(\Lambda)+T_{c_k}(\Lambda),
\quad
T_m(\Lambda)
}.
$$
We see in this case that the minimal null control time has the remarkable property to be independent of the internal coupling matrix $M$.
In particular, this is the same time as the one found for conservation laws in \cite{Wec82}, yet with a more explicit expression.
For $m>p$, this generalizes the aforementioned results of \cite{CN19,CN21} in two ways: firstly, this is a result for arbitrary full row rank boundary coupling matrices (not only for $Q \in \setCN$) and, secondly, this obviously establishes that no better time can be obtained (even for $Q \in \setCN$ this is not proved in \cite{CN19,CN21}).
We mention this because the results of the present paper will share these two features.

For the special case of $2 \times 2$ systems, the minimal null control time has also been found in \cite{CVKB13} when the boundary coupling matrix (which is then a scalar) is not zero and in \cite{HO20-pre} when the boundary coupling is reduced to zero.
Notably, in the second situation, the minimal null control time depends on the behavior of the internal coupling matrix $M$ (\cite[Theorem 1.5]{HO20-pre}).

Finally, we would like to mention the related works \cite{CHOS21,CN21-pre,AKM21-pre} concerning time-dependent systems and \cite{Li10,LR10,Hu15,CN20-pre1, CN20-pre2} for quasilinear systems.

%
%
%
%
%
%
%
%

As we have discussed, finding what exactly is the minimal null control time turns out to be a difficult task.
Instead, in this article we propose to look for the smallest and largest values that the minimal null control time $\Tinf(\Lambda,M,Q)$ can take with respect to the internal coupling matrix $M$.
Our main result is an explicit and easy-to-compute formula for both of these times.
We will also completely characterize all the parameters $\Lambda$ and $Q$ for which $\Tinf(\Lambda,M,Q)$ is invariant with respect to all $M \in L^{\infty}(0,1)^{n \times n}$.
We will show that our results generalize all the known works that have been previously quoted.
In the course of the proof we will obtain some new results even for conservation laws ($M=0$), notably with an explicit feedback law stabilizing the system in the minimal time.

Our proof relies on the notion of equivalent systems, in particular the backstepping method with the results of \cite{HDMVK16, HVDMK19}, the introduction of a canonical $LU$-decomposition for boundary coupling matrix $Q$ in the same spirit as in \cite{HO21}, as well as a compactness-uniqueness method adapted to the null controllability inspired from the works \cite{CN21,DO18}.

\subsection{Main result and comments}

As we have seen in the previous section, to explicitly characterize $\Tinf(\Lambda,M,Q)$ for arbitrary $M$ and $Q$ is still a challenging open problem.
Instead, we propose to find the smallest and largest values that it can take with respect to the internal coupling matrix $M$.

\begin{definition}\label{def Topt Tunif}
We define
\begin{gather*}
\infT(\Lambda,Q)=\inf \ens{\Tinf\syst{M}{} \st M \in L^{\infty}(0,1)^{n \times n}}, \\
\supT(\Lambda,Q)=\sup \ens{\Tinf\syst{M}{} \st M \in L^{\infty}(0,1)^{n \times n}}.
\end{gather*}
\end{definition}

The main result of the present paper is the following explicit characterization of these two quantities:

\begin{theorem}\label{main thm}
Let $\Lambda \in C^{0,1}([0,1])^{n \times n}$ satisfy \eqref{Lambda diag}-\eqref{hyp speeds} and let $Q \in \R^{p \times m}$ be fixed.
\begin{enumerate}[(i)]
\item\label{item infT}
We have
\begin{equation}\label{caract infT}
\infT(\Lambda,Q)=
\max\ens{
\max_{k \in \ens{1,\ldots,\rho}}
T_{m+r_k}(\Lambda)+T_{c_k}(\Lambda), \quad
T_{m+1}(\Lambda)
, \quad
T_m(\Lambda)
},
\end{equation}
where we recall that the indices $(r_k,c_k)$ are defined in Definition \ref{def indices}.

\item\label{item supT}
We have
\begin{equation}\label{caract supT}
\supT(\Lambda,Q)=
\max\ens{
\max_{k \in \ens{1,\ldots,\rho_0}}
T_{m+k}(\Lambda)+T_{c_k}(\Lambda),
\quad
T_{m+\rho_0+1}(\Lambda)+T_m(\Lambda)
},
\end{equation}
where $\rho_0$ is the largest integer $i \in \ens{1,\ldots,p}$ such that
$$
\text{the $i \times m$ matrix formed from the first $i$ rows of $Q$ has rank $i$,}
$$
with $\rho_0=0$ if the first row of $Q$ is equal to zero.
\end{enumerate}

\end{theorem}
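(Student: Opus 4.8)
The plan is to reduce the whole statement to the known full-row-rank result quoted from \cite{HO21}, using the $LCU$ decomposition to put $Q$ in a canonical form and the notion of equivalent systems (via backstepping) to reduce the internal coupling matrix $M$ to a convenient normal form. The guiding idea is that $\Tinf(\Lambda,M,Q)$ only depends on $Q$ through certain indices attached to its canonical form, and only on $M$ through the rank-related quantities that measure how the ``missing'' positive equations couple into the controlled part.

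For part \eqref{item infT}, the strategy is two-sided. For the upper bound $\Tinf(\Lambda,M,Q) \le \text{(RHS of \eqref{caract infT})}$, I would first fix a specific favourable $M$; the natural candidate is an $M$ engineered so that, after a backstepping/Volterra change of variables, the system decouples into a cascade whose controllability time is exactly governed by the pairs $T_{m+r_k}(\Lambda)+T_{c_k}(\Lambda)$ together with the two ``trivial'' times $T_{m+1}(\Lambda)$ (time needed to flush the fastest uncontrolled positive component that cannot be influenced) and $T_m(\Lambda)$ (time needed for the slowest negative component to leave through $x=1$). Concretely, I expect to use the $LCU$ decomposition $LQU=Q^0$: the upper-triangular $U$ and lower-triangular $L$ correspond to linear changes of the state components which are themselves realized by equivalent-system transformations, so one may as well assume $Q=Q^0$ is in canonical form, and then choose $M$ so that the backstepping target system is the block-triangular system whose null control time can be read off directly (this is where the $(r_k,c_k)$ enter, since $q^0_{r_k c_k}=1$ links positive equation $r_k$ to negative equation $c_k$). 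For the lower bound, one must show that \emph{no} choice of $M$ can beat the RHS: the two terms $T_{m+1}(\Lambda)$ and $T_m(\Lambda)$ are unconditional obstructions (the state on certain characteristic strips near $x=0$ is simply not seen by the control before those times, independent of $M$ — a finite-speed-of-propagation / domain-of-dependence argument), while the terms $T_{m+r_k}(\Lambda)+T_{c_k}(\Lambda)$ come from a compactness–uniqueness argument showing that if $T < T_{m+r_k}(\Lambda)+T_{c_k}(\Lambda)$ then null controllability fails for every $M$, because the coupling forced by $q^0_{r_k c_k}=1$ propagates information along a path of total transit time $T_{m+r_k}(\Lambda)+T_{c_k}(\Lambda)$ that cannot be short-circuited by any bounded internal coupling.

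For part \eqref{item supT}, the structure is analogous but the roles are reversed: now I want the \emph{largest} time over $M$, so the adversary picks the worst $M$. The key observation is that the first $\rho_0$ rows of $Q$ are linearly independent (that is the definition of $\rho_0$), and by Gaussian elimination one can arrange the canonical positions $c_1,\dots,c_{\rho_0}$ for these rows; the $(\rho_0+1)$-st row either does not exist ($\rho_0=p$) or is a linear combination of the previous ones, meaning the positive component indexed just after the independent block is, in the worst case, effectively ``decoupled at the boundary'' and must be driven to zero purely through the interior dynamics over the full round-trip time $T_{m+\rho_0+1}(\Lambda)+T_m(\Lambda)$. So the upper bound $\Tinf \le \text{(RHS of \eqref{caract supT})}$ holds for all $M$ (Russell-type argument applied to the reduced system after using the first $\rho_0$ independent rows, together with the general bound $\TRus$-type estimate on the leftover block), and the matching lower bound is achieved by exhibiting a specific bad $M$ — again via an equivalent-system/backstepping construction — for which the compactness–uniqueness method produces exactly this obstruction. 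Here one reuses the full-row-rank formula of \cite{HO21}: restricting attention to the $\rho_0$ independent rows, $\Tinf$ for that subsystem is the max of $T_{m+k}(\Lambda)+T_{c_k}(\Lambda)$, and the extra term $T_{m+\rho_0+1}(\Lambda)+T_m(\Lambda)$ is the price of the first dependent (or absent) row.

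I expect the main obstacle to be the lower bounds in both parts, i.e. proving non-controllability below the claimed times \emph{uniformly in $M$}. The finite-speed-of-propagation terms ($T_{m+1}(\Lambda)$, $T_m(\Lambda)$, and the $T_{m+\rho_0+1}(\Lambda)+T_m(\Lambda)$ term) should be comparatively soft, handled by tracking characteristics and domains of dependence. The delicate terms are the $T_{m+r_k}(\Lambda)+T_{c_k}(\Lambda)$ obstructions: these require the compactness–uniqueness method adapted to null controllability (in the spirit of \cite{CN21,DO18}), combined with a careful analysis — through the $LCU$ decomposition — of which boundary couplings are ``genuine'' and therefore impose an irreducible transit time that backstepping cannot remove. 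Making the equivalent-system reductions rigorous (checking that the Volterra/backstepping transformations are invertible and preserve the null control time, and that the $L$ and $U$ factors — which are non-unique because of zero rows/columns of $Q^0$ — can be chosen compatibly with the transformation) is the other technically demanding point, though the needed backstepping machinery is available from \cite{HDMVK16,HVDMK19}.
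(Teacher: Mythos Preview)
Your proposal has the right ingredients --- backstepping, the $LCU$ decomposition, and compactness--uniqueness --- but you deploy them in the wrong places, and this creates a genuine gap for item \eqref{item supT}.

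First, the lower bounds in both parts are \emph{not} where the difficulty lies, and they do not use compactness--uniqueness at all. After the reductions $(\Lambda,M,Q,-)\sim(\Lambda,-,Q,G)\sim(\Lambda,-,Q^0,\widetilde G)$, the obstruction $T\ge T_{m+r_k}(\Lambda)+T_{c_k}(\Lambda)$ is proved by a direct characteristic argument: if $T$ is strictly smaller, one traces the $(m+r_k)$-th characteristic back from $(T,x)$ to the boundary $x=0$, then the $c_k$-th characteristic back further, and for $x$ in a nonempty open set one lands at $t=0$ with the initial datum $y^0_{c_k}$ appearing explicitly; since $y^0_{c_k}$ is arbitrary in $L^2$ but the remaining terms define a bounded operator with range in $L^\infty$, one gets a contradiction. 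No compactness is needed. Likewise, the lower bound for $\supT$ is obtained by exhibiting a single explicit $M$ (equivalently, a single explicit $G$ with $g_{m+\rho_0+1,m}=1$) and showing directly, again by characteristics on a $2\times 2$ subsystem, that null controllability fails below $T_{m+\rho_0+1}(\Lambda)+T_m(\Lambda)$.

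The actual hard step, and the one place compactness--uniqueness is essential, is the \emph{upper bound} for $\supT$. After all the equivalent-system reductions one can arrange $Q=Q^0$ canonical, $G_{+-}\in\mathcal C(Q^0)$, and $G_{--}$ strictly lower triangular --- but one \emph{cannot} in general remove $G_{--}$ by any invertible transformation: the paper exhibits an explicit $3\times 3$ counterexample where the systems with and without $G_{--}$ are not equivalent (they differ in controllability at the critical time). So your ``Russell-type argument on the reduced system'' cannot close the gap, because the reduction you need does not exist at the level of equivalent systems. What the paper does instead is prove, via a compactness--uniqueness argument adapted to null controllability, that $\Tinf(\Lambda,-,Q^0,G)=\Tinf(\Lambda,-,Q^0,\begin{psmallmatrix}0\\G_{+-}\end{psmallmatrix})$ for \emph{every} $G$: the difference of the two adjoint flows satisfies $\theta_+=0$ identically, and the residual $\theta_-$ is controlled by a compact integral operator acting on the boundary trace $z_-(\cdot,0)$, which feeds into an abstract perturbation lemma combining a Fattorini--Hautus test with a finite-dimensional reduction. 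Only after $G_{--}$ has been eliminated \emph{this way} can one run the explicit characteristic computation that gives the upper bound \eqref{cond T supT} for all remaining $G$. Your proposal misses this mechanism entirely, and without it the upper bound in \eqref{caract supT} is not established.
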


In the statement of Theorem \ref{main thm}, we use the convention that the undefined quantities are simply not taken into account, which more precisely gives:
\begin{itemize}
\item
If $\rho=0$, then $\infT(\Lambda,0)=\max\ens{T_{m+1}(\Lambda), \quad T_m(\Lambda)}$.

\item
If $\rho_0=0$, then $\supT(\Lambda,Q)=T_{m+1}(\Lambda)+T_m(\Lambda)$.

\item
If $\rho_0=p$, then $\supT(\Lambda,Q)=\max\ens{\max_{k \in \ens{1,\ldots,p}} T_{m+k}(\Lambda)+T_{c_k}(\Lambda), \quad T_m(\Lambda)}$.
\end{itemize}

An equivalent definition of $\rho_0$ is (when the first row of $Q$ is not equal to zero)
\begin{equation}\label{r less than rhozero}
\rho_0=\max\ens{i \in \ens{1,\ldots,p} \st r_k=k, \quad \forall k \in \ens{1,\ldots,i}}.
\end{equation}
We emphasize that $\rho_0$ is defined for any $Q$, it is not a condition like $Q \in \setCN$.

By investigating the possibilities of equality $\infT(\Lambda,Q)=\supT(\Lambda,Q)$, we obtain the following important consequence of Theorem \ref{main thm}:

\begin{corollary}\label{cor main thm}
Let $\Lambda \in C^{0,1}([0,1])^{n \times n}$ satisfy \eqref{Lambda diag}-\eqref{hyp speeds} and let $Q \in \R^{p \times m}$ be fixed.
The map $M \mapsto \Tinf(\Lambda,M,Q)$ is constant on $L^{\infty}(0,1)^{n \times n}$ if, and only if, $\Lambda$ and $Q$ satisfy
\begin{equation}\label{CNS Tinf indep M}
\rho_0=p \quad \text{ or } \quad \left(0<\rho_0<p \quad \text{ and } \quad \max_{k \in \ens{1,\ldots,\rho_0}}
T_{m+k}(\Lambda)+T_{c_k}(\Lambda)
\geq
T_{m+\rho_0+1}(\Lambda)+T_m(\Lambda)
\right).
\end{equation}
\end{corollary}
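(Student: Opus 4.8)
The plan is to derive Corollary \ref{cor main thm} from Theorem \ref{main thm} by a direct comparison of the two formulas \eqref{caract infT} and \eqref{caract supT}. The first step is the elementary observation that, $L^{\infty}(0,1)^{n \times n}$ being nonempty, the map $M \mapsto \Tinf(\Lambda,M,Q)$ is constant if and only if its infimum and supremum over $M$ coincide, that is, if and only if $\infT(\Lambda,Q) = \supT(\Lambda,Q)$. After this reduction no controllability argument remains: everything becomes an arithmetic comparison of two maxima of the elementary times $T_i(\Lambda)$.

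Before comparing, I would record the structural facts that make the comparison work. The \emph{strict} inequalities in \eqref{hyp speeds} upgrade \eqref{order times} to the strict inequalities $T_{m+j}(\Lambda) < T_{m+i}(\Lambda)$ for $1 \le i < j \le p$ and $T_i(\Lambda) < T_j(\Lambda)$ for $1 \le i < j \le m$, and they give $T_i(\Lambda) > 0$ for every $i$ since each $\lambda_i$ is continuous and does not vanish on the compact $[0,1]$. From \eqref{r less than rhozero} one has $r_k = k$ for all $k \le \rho_0$; moreover, if $\rho_0 < \rho$ then the maximality of $\rho_0$ forces $r_{\rho_0+1} \neq \rho_0+1$, hence $r_{\rho_0+1} \ge \rho_0+2$ (so in particular $\rho_0 + 2 \le p$), and therefore $r_k \ge \rho_0+2$ for all $k \ge \rho_0+1$. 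Consequently, in \eqref{caract infT} the terms indexed by $k \le \rho_0$ are exactly the terms $T_{m+k}(\Lambda)+T_{c_k}(\Lambda)$ of \eqref{caract supT}, while every term indexed by $k \ge \rho_0+1$ is bounded by $T_{m+\rho_0+2}(\Lambda)+T_m(\Lambda)$, which is \emph{strictly} smaller than $T_{m+\rho_0+1}(\Lambda)+T_m(\Lambda)$.

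I would then split into three cases according to the value of $\rho_0$. If $\rho_0 = p$, then $\rho = p$ and $r_k = k$ for every $k$; since $T_{m+1}(\Lambda) \le T_{m+1}(\Lambda)+T_{c_1}(\Lambda)$, formula \eqref{caract infT} reduces to $\max\{ \max_{1 \le k \le p} T_{m+k}(\Lambda)+T_{c_k}(\Lambda), \, T_m(\Lambda) \}$, which is precisely the value of \eqref{caract supT} in this case (by the convention recorded after Theorem \ref{main thm}), so $\infT = \supT$. If $\rho_0 = 0$, then (handling $\rho = 0$ trivially) each coupling term of \eqref{caract infT} is at most $T_{m+2}(\Lambda)+T_m(\Lambda) < T_{m+1}(\Lambda)+T_m(\Lambda)$, and both $T_{m+1}(\Lambda)$ and $T_m(\Lambda)$ are strictly below $T_{m+1}(\Lambda)+T_m(\Lambda)$ because every $T_i(\Lambda)$ is positive; hence $\infT < \supT = T_{m+1}(\Lambda)+T_m(\Lambda)$ and the map is not constant. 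Finally, if $0 < \rho_0 < p$, write $B = \max_{1 \le k \le \rho_0} T_{m+k}(\Lambda)+T_{c_k}(\Lambda)$, so that $\supT = \max\{ B, \, T_{m+\rho_0+1}(\Lambda)+T_m(\Lambda) \}$; by the facts above, \eqref{caract infT} equals the maximum of $B$, of $T_m(\Lambda)$, of $T_{m+1}(\Lambda)$ (which is $\le T_{m+1}(\Lambda)+T_{c_1}(\Lambda) \le B$ since $\rho_0 \ge 1$), and of terms strictly below $T_{m+\rho_0+1}(\Lambda)+T_m(\Lambda)$. Therefore, if $B \ge T_{m+\rho_0+1}(\Lambda)+T_m(\Lambda)$ then also $B > T_m(\Lambda)$ and one gets $\infT = B = \supT$, whereas if $B < T_{m+\rho_0+1}(\Lambda)+T_m(\Lambda)$ then $\infT < T_{m+\rho_0+1}(\Lambda)+T_m(\Lambda) = \supT$. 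Putting the three cases together yields exactly the dichotomy \eqref{CNS Tinf indep M}.

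I do not expect a genuine obstacle: granted Theorem \ref{main thm}, this is a bookkeeping argument. The only two points that need real care are the use of the \emph{strictness} of \eqref{hyp speeds} --- which is what prevents spurious equalities and forces non-constancy in the two ``bad'' configurations --- and the combinatorial fact $r_k \ge \rho_0+2$ for $k > \rho_0$ coming from the maximality of $\rho_0$, which is what renders the surplus terms of \eqref{caract infT} harmless against $\supT$.
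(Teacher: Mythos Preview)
Your argument is correct and follows precisely the route the paper indicates: the paper does not spell out a proof of Corollary \ref{cor main thm} but merely says that it is obtained ``by investigating the possibilities of equality $\infT(\Lambda,Q)=\supT(\Lambda,Q)$'' from Theorem \ref{main thm}, and your case analysis is exactly that investigation carried out in full. The two points you single out as delicate --- the strictness of the inequalities among the $T_i(\Lambda)$ coming from \eqref{hyp speeds}, and the combinatorial consequence $r_k \ge \rho_0+2$ for $k>\rho_0$ of the maximality in \eqref{r less than rhozero} --- are indeed the only places where care is needed, and you handle both correctly.
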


\begin{remark}\label{rem times are reached}
In the proof of Theorem \ref{main thm}, we will show in fact that the infimum in \eqref{caract infT} and the supremum in \eqref{caract supT} are reached for some special matrices $M$.
More precisely, we will show that:
\begin{itemize}
\item
The infimum in \eqref{caract infT} is reached for $M=0$.

\item
The supremum in \eqref{caract supT} is reached for $M=0$ if the condition \eqref{CNS Tinf indep M} holds.

\item
If the condition \eqref{CNS Tinf indep M} fails, then the supremum in \eqref{caract supT} is reached for the matrix $M$ whose entries are all equal to zero, except for
$$
m_{m+i, m}(x)=\frac{\lambda_{m+i}(x)-\lambda_m(x)}{-\lambda_m(x)}\ell^{i,\rho_0+1}, \quad \forall i \in \ens{\rho_0+1,\ldots,p},
$$
where $L^{-1}=(\ell^{ij})_{1 \leq i,j \leq p}$ and $L$ is any matrix $L$ coming from the $LCU$ decomposition of $Q$.
\end{itemize}

\end{remark}

\begin{example}\label{ex compt times}
For $Q_1 \in \R^{4 \times 3}$ and $Q_2 \in \R^{4 \times 4}$ of Example \ref{ex Q} we have (recall \eqref{order times})
$$
\begin{array}{rl}
\infT(\Lambda,Q_1) &=
\max\ens{
T_4(\Lambda)+T_2(\Lambda), \quad T_5(\Lambda)+T_3(\Lambda), \quad T_7(\Lambda)+T_1(\Lambda), \quad
T_4(\Lambda)
, \quad
T_3(\Lambda)
}
\\
&=\max\ens{
T_4(\Lambda)+T_2(\Lambda), \quad T_5(\Lambda)+T_3(\Lambda)
},
\end{array}
$$

$$
\begin{array}{rl}
\supT(\Lambda,Q_1) &=
\max\ens{
T_4(\Lambda)+T_2(\Lambda), \quad T_5(\Lambda)+T_3(\Lambda), \quad T_6(\Lambda)+T_3(\Lambda)
}
\\
&=\max\ens{
T_4(\Lambda)+T_2(\Lambda), \quad T_5(\Lambda)+T_3(\Lambda)
}
\\
&=\infT(\Lambda,Q_1),
\end{array}
$$
and
$$
\begin{array}{rl}
\infT(\Lambda,Q_2) &=
\max\ens{
T_5(\Lambda)+T_1(\Lambda), \quad T_6(\Lambda)+T_2(\Lambda), \quad T_8(\Lambda)+T_3(\Lambda), \quad
T_5(\Lambda)
, \quad
T_4(\Lambda)
}
\\
&=\max\ens{
T_5(\Lambda)+T_1(\Lambda), \quad T_6(\Lambda)+T_2(\Lambda), \quad T_8(\Lambda)+T_3(\Lambda), \quad
T_4(\Lambda)
},
\end{array}
$$

$$
\supT(\Lambda,Q_2) =
\max\ens{
T_5(\Lambda)+T_1(\Lambda), \quad T_6(\Lambda)+T_2(\Lambda), \quad T_7(\Lambda)+T_4(\Lambda)
}.
$$

\end{example}

\begin{remark}
If during the computations of the indices $(r_k,c_k)$ we arrive at the last column, that is if we have
\begin{equation}\label{ck=m}
c_{k_0}=m,
\end{equation}
for some $k_0 \in \ens{1,\ldots,p}$, then there is no need to find the next indices to be able to compute $\infT(\Lambda,Q)$ and $\supT(\Lambda,Q)$ since we know that the corresponding times will not be taken into account (because of \eqref{order times}).
For instance, for the matrix $Q_1$ of Example \ref{ex Q} we can stop after the very first step \eqref{step ck=m} since it gives $c_2=3$, there is no need to go on and compute $U_2$.
\end{remark}

\begin{remark}\label{rem lit}
Theorem \ref{main thm} and its corollary generalize all the results of the literature that we are aware of on the null controllability of systems of the form \eqref{syst} (except for the special case $n=2$, which has been completely solved in \cite{CVKB13,HO20-pre}):

\begin{itemize}
\item
When the matrix $Q$ is full row rank, that is,
\begin{equation}\label{Q full row rank}
\rank Q=p,
\end{equation}
exact and null controllability are equivalent properties for the system \eqref{syst} (see e.g. \cite[Remark 1.3]{HO21}) and it has been shown in \cite[Theorem 4.1]{HO21} that $\Tinf(\Lambda,M,Q)$ is independent of $M$ in that situation.
Under the rank condition \eqref{Q full row rank}, it is clear that $\rho_0=p$ and the condition \eqref{CNS Tinf indep M} is thus satisfied.
It then follows from Corollary \ref{cor main thm} that $\Tinf(\Lambda,M,Q)$ is independent of $M$.
Therefore, our result encompasses the one of \cite{HO21}.

\item
When $m \leq p$ and $Q \in \setCN$ (defined in \eqref{def setCN}), it has been established in \cite[Theorem 1]{CN21} that
$$
\supT(\Lambda,Q)
\leq
\TCN,
$$
where we recall that $\TCN$ is given by \eqref{def TCN}.
In that case, we see that $r_k=c_k=k$ for every $k \in \ens{1,\ldots,m-1}$ and either $\rho_0=m-1$ or $\rho_0=m$.
In all cases, we can check that
$$
\max\ens{
\max_{k \in \ens{1,\ldots,\rho_0}}
T_{m+k}(\Lambda)+T_{c_k}(\Lambda),
\quad
T_{m+\rho_0+1}(\Lambda)+T_m(\Lambda)
}
=\TCN.
$$
Therefore, item \ref{item supT} of Theorem \ref{main thm} generalizes \cite[Theorem 1]{CN21}, which corresponded only to the inequality ``$\leq$'' and only valid for matrices $Q \in \setCN$, but excluded for instance the matrices presented in Example \ref{ex Q}.
We mention that, since the speeds are ordered, we cannot simply renumber the unknowns so that, after this transformation, the new matrix $Q$ belongs to $\setCN$.

\item
In fact, when $\rho_0=m$ in the previous point, the minimal null control time does not depend on $M$.
More generally, if the condition \eqref{ck=m} holds for some $k_0 \leq \rho_0$, then the condition \eqref{CNS Tinf indep M} is satisfied (because of \eqref{order times}) and it follows from Corollary \ref{cor main thm} that
$$
\Tinf(\Lambda,M,Q)=
\max_{k \in \ens{1,\ldots,k_0}}
T_{m+k}(\Lambda)+T_{c_k}(\Lambda).
$$
For instance, this condition is satisfied when the matrix $Q$ has the block decomposition
$$
Q=\begin{pmatrix}
Q'
\\
Q''
\end{pmatrix},
\quad \rank Q'=m,
$$
where $Q' \in \R^{m \times m}$ and $Q'' \in \R^{(p-m) \times m}$.
\end{itemize}

\end{remark}


\subsection{Equivalent systems}

The proof of our main result will first consist in transforming our initial system \eqref{syst} into ``equivalent'' systems (from a controllability point of view) which have a simpler coupling structure.
Let us make this notion of equivalent systems precise here.
We will introduce it for a slightly broader class of systems than \eqref{syst} because of the nature of the transformations that we will use in the sequel, this will be clear from Section \ref{sect backstepping}.
All the systems of this paper will have the following form:
\begin{equation}\label{syst gen}
\begin{dcases}
\pt{y}(t,x)+\Lambda(x) \px{y}(t,x)=M(x) y(t,x)+G(x)y_-(t,0), \\
y_-(t,1)=u(t), \quad y_+(t,0)=Qy_-(t,0),  \\
y(0,x)=y^0(x),
\end{dcases}
\end{equation}
where $M \in L^{\infty}(0,1)^{n \times n}$ and $Q \in\R^{p \times m}$ as before, and $G \in L^{\infty}(0,1)^{n \times m}$.
Therefore, \eqref{syst gen} is similar to \eqref{syst} but it has the extra term with $G$.
This system is well posed and the notions of controllability are similarly defined (see Section \ref{sect sol char} below).

In what follows, we will refer to a system of the general form \eqref{syst gen} as
$$(\Lambda,M,Q,G).$$
When a system does not contain a parameter ($M$ or $G$) we will use the notation $-$ rather than writing $0$, for instance we will use $(\Lambda,M,Q,-)$ when the system does not contain $G$.
The minimal null control time of the system $(\Lambda,M,Q,G)$ will be denoted by $\Tinf(\Lambda,M,Q,G)$ (for consistency, we will keep using the notation $\Tinf(\Lambda,M,Q)$ rather than $\Tinf(\Lambda,M,Q,-)$).

Let us now give the precise definition of what we mean by equivalent systems in this work:

\begin{definition}\label{def syst equiv}
We say that two systems $(\Lambda,M_1,Q_1,G_1)$ and $(\Lambda,M_2,Q_2,G_2)$ are equivalent, and we write
$$(\Lambda,M_1,Q_1,G_1) \sim (\Lambda,M_2,Q_2,G_2),$$
if there exists an invertible bounded linear transformation
$$L:L^2(0,1)^n \longrightarrow L^2(0,1)^n,$$
such that, for every $T>0$, the induced map $\tilde{L}:C^0([0,T];L^2(0,1)^n) \longrightarrow C^0([0,T];L^2(0,1)^n)$ defined by $(\tilde{L}y)(t)=L(y(t))$ for every $t \in [0,T]$ satisfies
$$\tilde{L}(S_1)=S_2,$$
where $S_i$ ($i=1,2$) denotes the space of all the solutions $y$ to the system $(\Lambda,M_i,Q_i,G_i)$ in $(0,T)$.
\end{definition}

It is not difficult to check that $\sim$ is an equivalence relation and that two equivalent systems share the same controllability properties:

\begin{proposition}\label{basic prop equiv systs}
Let $(\Lambda,M_1,Q_1,G_1) \sim (\Lambda,M_2,Q_2,G_2)$ be two equivalent systems.
Then, for every $T>0$, the system $(\Lambda,M_1,Q_1,G_1)$ is null controllable in time $T$ if, and only if, the system $(\Lambda,M_2,Q_2,G_2)$ is null controllable in time $T$.
\end{proposition}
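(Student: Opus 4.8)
The plan is to exploit the fact that the equivalence transformation $L$ is an isomorphism of $L^2(0,1)^n$ that carries solutions of one system onto solutions of the other, and then to chase the definition of null controllability through this correspondence. First I would fix $T>0$ and assume, without loss of generality by symmetry of the equivalence relation $\sim$, that the system $(\Lambda,M_1,Q_1,G_1)$ is null controllable in time $T$; the goal is to deduce null controllability of $(\Lambda,M_2,Q_2,G_2)$. Let $L$ be the invertible bounded operator from Definition \ref{def syst equiv} and $\tilde L$ the induced map on $C^0([0,T];L^2(0,1)^n)$ with $\tilde L(S_1)=S_2$.

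The key step is to track initial data and final data under $\tilde L$. Given any target initial datum $z^0 \in L^2(0,1)^n$ for the second system, set $y^0 = L^{-1}z^0$. Since the first system is well posed and null controllable in time $T$, there is a control $u \in L^2(0,T)^m$ whose solution $y \in S_1$ satisfies $y(0,\cdot)=y^0$ and $y(T,\cdot)=0$. Then $z := \tilde L y$ lies in $S_2$, so it is the solution to $(\Lambda,M_2,Q_2,G_2)$ associated with some control (the one read off from its boundary trace $z_-(t,1)$, which is in $L^2(0,T)^m$ by the $C^0([0,1];L^2(0,T)^n)$ regularity), and $z(0,\cdot)=L(y(0,\cdot))=L(y^0)=z^0$ while $z(T,\cdot)=L(y(T,\cdot))=L(0)=0$. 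This shows $(\Lambda,M_2,Q_2,G_2)$ is null controllable in time $T$, and the converse direction is identical with the roles of the two systems swapped.

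A couple of points need a sentence of care rather than real work. One must note that every element of $S_i$ is genuinely the solution of the $i$-th system for the control given by its own trace at $x=1$, which is built into the definition of $S_i$ as ``the space of all solutions in $(0,T)$'' together with the well-posedness/trace regularity recalled in the excerpt; and one must note that $L$ linear implies $L(0)=0$, which is what sends zero final data to zero final data. The only genuine subtlety — and the step I expect to be the mildest obstacle — is making sure the transformation is compatible across different times, i.e. that $\tilde L$ restricted to $[0,T']$ for $T'<T$ is consistent with the one on $[0,T]$; this is immediate from the pointwise-in-$t$ definition $(\tilde L y)(t)=L(y(t))$, which does not see $T$ at all, so the argument above is in fact time-uniform. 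No compactness, no PDE estimates, and no properties of $M,Q,G$ beyond well-posedness are used; the proposition is essentially a formal consequence of Definition \ref{def syst equiv}.
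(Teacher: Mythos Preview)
Your proposal is correct and is precisely the routine verification the paper has in mind: the authors do not give a proof at all, merely remarking before the proposition that ``it is not difficult to check that $\sim$ is an equivalence relation and that two equivalent systems share the same controllability properties.'' Your argument spells this out cleanly, and the two points you flag (that every element of $S_2$ carries its own $L^2$ control via the trace regularity built into Definition~\ref{def sol char}, and that $L$ linear gives $L(0)=0$) are exactly the small checks needed.
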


In particular, two equivalent systems have the same minimal null control time.
However, the converse is not true in general, an example has been detailed in Appendix \ref{sect counterexample}.

\begin{remark}
Let us emphasize that the notion of equivalent systems that we introduced here does not care how the control from one system is obtained from the control of the other system.
It is different from the notion of (feedback) equivalence introduced in the seminal work \cite{Bru70} in finite dimension, which was designed to transfer the stabilization properties of one system to another and thus required a more specific link between the two systems.
\end{remark}

\subsection{Outline of the proof}

Since the proof of our main result involves many transformations, let us give a quick overview of the main steps before going into detail:

\begin{enumerate}[1)]
\item
First of all, we show in Section \ref{sect backstepping} that 
$$
(\Lambda,M,Q,-)
\equi
(\Lambda,-,Q,G),
$$
for some $G$.
It is nothing but a fundamental result of \cite{HVDMK19,HDMVK16} that we rephrase here with the notion of equivalent systems.
Consequently, we only have to focus on systems of the form $(\Lambda,-,Q,G)$ in the sequel, which have the advantage of having a simpler coupling structure.

\item
In Section \ref{sect boundary coupling matrix}, we show that the boundary coupling matrix $Q$ can always be assumed in canonical form (Definition \ref{def canon}):
$$
(\Lambda,-,Q,G)
\equi
(\Lambda,-,Q^0,\widetilde{G}),
$$
for some $\widetilde{G}$.
This is an important step that greatly simplifies the coupling structure of the system.

\item
Notably, this allows us to characterize in Section \ref{sect infT} the smallest value of the minimal null control time.
More precisely, we first establish that
$$
\inf\ens{\Tinf(\Lambda,-,Q^0,\widetilde{G}) \st \widetilde{G} \in L^{\infty}(0,1)^{n \times m}}
$$
is equal to the quantity on the right-hand side of the equality \eqref{caract infT}.
This is done by using a similar argument to the one in \cite{HO20-pre}.
We then show how to deduce the corresponding result for the initial system $(\Lambda,M,Q,-)$, thus proving the first part of our main result.

\item
In view of the proof of the second part of our main result, we first show in Section \ref{sect reduc to casc} how to use the canonical form of $Q^0$ to prove that
$$
(\Lambda,-,Q^0,\widetilde{G})
\equi
\left(\Lambda,-,Q^0,
\begin{pmatrix}
\widetilde{G}_{--} \\
\widehat{G}_{+-}
\end{pmatrix}
\right),
$$
for some $\widehat{G}_{+-}$ which has the following structure:
\begin{equation}\label{cond Gtilde}
\hat{g}_{m+i, c_k}=0, \quad \forall k \in \ens{1,\ldots,\rho}, \, \forall i \geq r_k.
\end{equation}

\item
In Section \ref{sect compactness-uniqueness} we then prove that the coupling term $\widetilde{G}_{--}$ has no influence on the minimal null control time:
$$
\Tinf\left(\Lambda,-,Q^0,
\begin{pmatrix}
\widetilde{G}_{--} \\
\widehat{G}_{+-}
\end{pmatrix}
\right)
=
\Tinf\left(\Lambda,-,Q^0,
\begin{pmatrix}
0 \\
\widehat{G}_{+-}
\end{pmatrix}
\right).
$$
Unlike all the other steps, the proof is not based on the construction of a suitable transformation, it is based on a general compactness-uniqueness method adapted to the null controllability property and inspired from the previous works \cite{CN21,DO18}.

\item
Finally, in Section \ref{sect caract supT}, we characterize the largest value of the minimal null control time.
More precisely, we first show that
$$
\sup\ens{\Tinf\left(\Lambda,-,Q^0,
\begin{pmatrix}
0 \\
\widehat{G}_{+-}
\end{pmatrix}
\right) \st \text{$\widehat{G}_{+-}$ satisfies \eqref{cond Gtilde}}}
$$
is equal to the quantity on the right-hand side of the equality \eqref{caract supT}.
We then show how to deduce the corresponding result for the initial system $(\Lambda,M,Q,-)$, thus proving the second part of our main result.

\end{enumerate}

\begin{remark}
All the steps described above are constructive, except for the one invoking a compactness-uniqueness argument.
It would be interesting to be able to replace this step by a constructive approach (if possible).
\end{remark}

\section{Notations and solution along the characteristics}\label{sect sol char}

Before proceeding to the proof of our main result, we introduce in this section some notations and recall some results concerning the well-posedness of the non standard systems of the form \eqref{syst gen}.

\subsection{The characteristics}\label{sect caract}

We start with the characteristic curves associated with the system \eqref{syst gen}.

\begin{itemize}
\item
First of all, throughout this paper it is convenient to extend $\lambda_1,\ldots,\lambda_n$ to functions of $\R$ (still denoted by the same) such that $\lambda_1,\ldots,\lambda_n \in C^{0,1}(\R)$ and 
\begin{equation}\label{hyp speeds bis}
\lambda_1(x)<\cdots<\lambda_m(x) \leq -\epsilon <0< \epsilon \leq \lambda_{m+1}(x)<\cdots<\lambda_{m+p}(x), \quad \forall x \in \R,
\end{equation}
for some $\epsilon>0$ small enough.
Since all the results of the present paper depend only on the values of $\lambda_1,\ldots,\lambda_n$ in $[0,1]$, they do not depend on such an extension.
\end{itemize}

In what follows, $i \in \ens{1,\ldots,n}$ is fixed.

\begin{itemize}
\item
Let $\chi_i$ be the flow associated with $\lambda_i$, i.e. for every $(t,x) \in \R \times \R$, the function $s \mapsto \chi_i(s;t,x)$ is the solution to the ordinary differential equation (ODE)
\begin{equation}\label{ODE xi}
\begin{dcases}
\pas{\chi_i}(s;t,x)=\lambda_i(\chi_i(s;t,x)), \quad \forall s \in \R, \\
\chi_i(t;t,x)=x.
\end{dcases}
\end{equation}
The existence and uniqueness of a (global) solution to the ODE \eqref{ODE xi} follows from the (global) Cauchy-Lipschitz theorem (see e.g. \cite[Theorem II.1.1]{Har02}).
The uniqueness also yields the important group property
\begin{equation}\label{chi invariance}
\chi_i\left(\sigma;s,\chi_i(s;t,x)\right)=\chi_i(\sigma;t,x), \quad \forall \sigma,s \in \R.
\end{equation}

\item
Let us now introduce the entry and exit times $\ssin_i(t,x),\ssout_i(t,x) \in \R$ of the flow $\chi_i(\cdot;t,x)$ inside the domain $[0,1]$, i.e. the respective unique solutions to
$$
\begin{dcases}
\chi_i(\ssin_i(t,x);t,x)=1, \quad \chi_i(\ssout_i(t,x);t,x)=0, & \text{ if } i \in \ens{1,\ldots,m}, \\
\chi_i(\ssin_i(t,x);t,x)=0, \quad \chi_i(\ssout_i(t,x);t,x)=1, & \text{ if } i \in \ens{m+1,\ldots,n}.
\end{dcases}
$$
Their existence and uniqueness are guaranteed by the condition \eqref{hyp speeds bis}.

\item
Since $\lambda_i$ does not depend on time, we have an explicit formula for the inverse function $\theta \mapsto \chi_i^{-1}(\theta;t,x)$.
Indeed, it solves
$$
\begin{dcases}
\frac{\partial (\chi_i^{-1})}{\partial \theta}(\theta;t,x)=\frac{1}{\pas{\chi_i}\left(\chi_i^{-1}(\theta;t,x);t,x\right)}=\frac{1}{\lambda_i(\theta)}, \quad \forall \theta \in \R,\\
\chi_i^{-1}(x;t,x)=t,
\end{dcases}
$$
which gives
$$
\chi_i^{-1}(\theta;t,x)=t+\int_x^\theta \frac{1}{\lambda_i(\xi)} \, d\xi.
$$
It follows that
\begin{equation}\label{explicit formula ssin}
\begin{dcases}
\ssin_i(t,x)=t-\int_x^1 \frac{1}{-\lambda_i(\xi)} \, d\xi, \quad \ssout_i(t,x)=t+\int_0^x \frac{1}{-\lambda_i(\xi)} \, d\xi, & \text{ if } i \in \ens{1,\ldots,m}, \\
\ssin_i(t,x)=t-\int_0^x \frac{1}{\lambda_i(\xi)} \, d\xi, \quad \ssout_i(t,x)=t+\int_x^1 \frac{1}{\lambda_i(\xi)} \, d\xi, & \text{ if } i \in \ens{m+1,\ldots,n}.
\end{dcases}
\end{equation}

\item
We have the following monotonic properties:
\begin{equation}\label{ssin monotonicity}
\begin{dcases}
\pt{\ssin_i}>0, \quad \px{\ssin_i}>0, \quad \pt{\ssout_i}>0, \quad \px{\ssout_i}>0, & \text{ if } i \in \ens{1,\ldots,m}, \\
\pt{\ssin_i}>0, \quad \px{\ssin_i}<0, \quad \pt{\ssout_i}>0, \quad \px{\ssout_i}<0, & \text{ if } i \in \ens{m+1,\ldots,n},
\end{dcases}
\end{equation}
and the following inverse formula, valid for every $s,t \in \R$:
\begin{equation}\label{inv sin}
\begin{dcases}
s<\ssout_i(t,1) \quad \Longleftrightarrow \quad \ssin_i(s,0)<t, & \text{ if } i \in \ens{1,\ldots,m}, \\
s<\ssout_i(t,0) \quad \Longleftrightarrow \quad \ssin_i(s,1)<t, & \text{ if } i \in \ens{m+1,\ldots,n}.
\end{dcases}
\end{equation}

\item
Note as well that (recall \eqref{comp Ti})
\begin{equation}\label{def Ti gen}
T_i(\Lambda)=
\begin{dcases}
\ssout_i(0,1) & \text{ if } i \in \ens{1,\ldots,m}, \\
\ssout_i(0,0) & \text{ if } i \in \ens{m+1,\ldots,n}.
\end{dcases}
\end{equation}

\item
Finally, we introduce the non negative and increasing function $\phi_i \in C^{1,1}(\R)$ defined by
\begin{equation}\label{def phi}
\phi_i(x)=
\begin{dcases}
\int_0^x \frac{1}{-\lambda_i(\xi)} \, d\xi & \text{ if } i \in \ens{1,\ldots,m}, \\
\int_0^x \frac{1}{\lambda_i(\xi)} \, d\xi & \text{ if } i \in \ens{m+1,\ldots,n}.
\end{dcases}
\end{equation}
Note that it is a bijection from $[0,1]$ to $[0,T_i(\Lambda)]$.

\end{itemize}

\subsection{Solution along the characteristics}

Let us now introduce the notion of solution for systems of the form \eqref{syst gen}.
To this end, we have to restrict our discussion to the domain where the system evolves, i.e. on $(0,T)\times(0,1)$, $T>0$ being fixed.
For every $(t,x) \in (0,T)\times(0,1)$, we have
$$(s,\chi_i(s;t,x)) \in (0,t)\times(0,1), \quad \forall s \in (\ssinb_i(t,x),t),$$
where we introduced
$$\ssinb_i(t,x)=\max\ens{0,\ssin_i(t,x)}<t.$$

We now proceed to formal computations in order to introduce the notion of solution for non smooth functions $y$.
Writing the $i$-th equation of the system \eqref{syst gen} along the characteristic $\chi_i(s;t,x)$ for $s \in [\ssinb_i(t,x),t]$, and using the chain rules yields the ODE
\begin{equation}\label{sol along char}
\left\{\begin{array}{l}
\ds \dds y_i\left(s,\chi_i(s;t,x)\right)=
\sum_{j=1}^n m_{ij}\left(\chi_i(s;t,x)\right)y_j\left(s,\chi_i(s;t,x)\right)
+\sum_{j=1}^m g_{ij}\left(\chi_i(s;t,x)\right)y_j\left(s,0\right), \\
\ds y_i\left(\ssinb_i(t,x),\chi_i(\ssinb_i(t,x);t,x)\right)=b_i\left(y^0_i,u_i,y_-(\cdot,0)\right)(t,x),
 \end{array}\right.
\end{equation}
where the initial condition $b_i(y^0_i,u_i,y_-(\cdot,0))(t,x)$ is given by the appropriate boundary or initial conditions in \eqref{syst gen}:
\begin{itemize}
\item
for $i \in \ens{1,\ldots,m}$,
\begin{equation}\label{def bi i<m}
b_i\left(y^0_i,u_i,y_-(\cdot,0)\right)(t,x)=
\begin{dcases}
u_i(\ssin_i(t,x)) & \text{ if } \ssin_i(t,x)>0, \\
y^0_i(\chi_i(0;t,x)) & \text{ if } \ssin_i(t,x)<0,
\end{dcases}
\end{equation}

\item
for $i \in \ens{m+1,\ldots,n}$,
\begin{equation}\label{def bi i>m}
b_i\left(y^0_i,u_i,y_-(\cdot,0)\right)(t,x)=
\begin{dcases}
\sum_{j=1}^m q_{i-m,j} y_j(\ssin_i(t,x),0)& \text{ if } \ssin_i(t,x)>0, \\
y^0_i(\chi_i(0;t,x)) & \mbox{ if } \ssin_i(t,x)<0.
\end{dcases}
\end{equation}
\end{itemize}
Integrating the ODE \eqref{sol along char} over $s \in [\ssinb_i(t,x),t]$, we obtain the following system of integral equations:
\begin{multline}\label{sol char def 1}
y_i(t,x)=b_i\left(y^0_i,u_i,y_-(\cdot,0)\right)(t,x)+\sum_{j=1}^n \int_{\ssinb_i(t,x)}^{t} m_{ij}(\chi_i(s;t,x))y_j(s,\chi_i(s;t,x)) \, ds
\\
+\sum_{j=1}^m \int_{\ssinb_i(t,x)}^t g_{ij}\left(\chi_i(s;t,x)\right)y_j\left(s,0\right) \, ds.
\end{multline}
This leads to the following notion of solution called ``solution along the characteristics'':

\begin{definition}\label{def sol char}
Let $T>0$, $y^0 \in L^2(0,1)^n$ and $u \in L^2(0,T)^m$ be fixed.
We say that a function $y:(0,T)\times(0,1) \longrightarrow \R^n$ is a solution to the system \eqref{syst gen} in $(0,T)$ if
$$y \in C^0([0,T];L^2(0,1)^n) \cap C^0([0,1];L^2(0,T)^n),$$
and if the integral equation \eqref{sol char def 1} is satisfied for every $i \in \ens{1,\ldots,n}$ and for a.e. $(t,x) \in (0,T) \times (0,1)$.
\end{definition}

Using the Banach fixed-point theorem and suitable estimates, we can establish that the system \eqref{syst gen} is globally well posed in this sense:

\begin{theorem}
For every $T>0$, $y^0 \in L^2(0,1)^n$ and $u \in L^2(0,T)^m$, there exists a unique solution $y \in C^0([0,T];L^2(0,1)^n) \cap C^0([0,1];L^2(0,T)^n)$ to the system \eqref{syst gen} in $(0,T)$.
Moreover, we have
\begin{equation}\label{estim well posed}
\norm{y}_{C^0([0,T];L^2(0,1)^n)}
+\norm{y}_{C^0([0,1];L^2(0,T)^n)}
\leq C\left(\norm{y^0}_{L^2(0,1)^n}+\norm{u}_{L^2(0,T)^m}\right),
\end{equation}
for some $C>0$ that does not depend on $y^0$ nor $u$.
\end{theorem}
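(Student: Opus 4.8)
The plan is to recast \eqref{syst gen} as a fixed-point equation, apply the Banach fixed-point theorem on a short time interval, and then extend to arbitrary $T$ by a finite iteration.

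Fix $\delta \in (0,1]$ and set $X_\delta = C^0([0,\delta];L^2(0,1)^n) \cap C^0([0,1];L^2(0,\delta)^n)$, equipped with the sum of the two natural norms. Given $y^0 \in L^2(0,1)^n$ and $u \in L^2(0,\delta)^m$, I would define the affine map $\mathcal{F} \colon X_\delta \to X_\delta$ by letting $(\mathcal{F}z)_i(t,x)$ be the right-hand side of \eqref{sol char def 1} with $y$ replaced by $z$; this map is affine because, by \eqref{def bi i<m}--\eqref{def bi i>m}, the term $b_i$ depends on $z$ only through the trace $z_-(\cdot,0)$ and affinely so (being independent of $z$ altogether for $i \le m$), while the two integral terms are linear in $z$. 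By Definition \ref{def sol char}, a function $y \in X_T$ is a solution of \eqref{syst gen} in $(0,T)$ if and only if $\mathcal F y = y$ with $\delta = T$. The first point to verify is that $\mathcal F$ maps $X_\delta$ into itself: for $b_i$ this uses that $t \mapsto \ssin_i(t,x)$ and $x \mapsto \ssin_i(t,x)$ are, respectively, a translation and a bi-Lipschitz change of variables — by \eqref{explicit formula ssin}, \eqref{ssin monotonicity} and the bounds $\epsilon \le \abs{\lambda_i(x)} \le \norm{\lambda_i}_{L^\infty}$ coming from \eqref{hyp speeds bis} — so that precomposition with them preserves both $L^2$-integrability and continuity; for the integral terms one invokes the Cauchy--Schwarz inequality, Fubini's theorem and the fact that the flows $\chi_i(s;t,\cdot)$ have Jacobians bounded above and below for $0 \le s \le t \le \delta \le 1$, the continuity in $t$ (resp.\ in $x$) then following from continuity of translations in $L^2$ and dominated convergence.

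The heart of the argument is the Lipschitz estimate for $\mathcal F z - \mathcal F \tilde z$, $z,\tilde z \in X_\delta$. Running the same manipulations, all the integral terms are seen to carry a factor $\omega(\delta)$ with $\omega(\delta) \to 0$ as $\delta \to 0$ (the integration intervals $[\ssinb_i(t,x),t]$ having length at most $\delta$), while $b_i$ vanishes for $i \le m$ and equals $\sum_{j=1}^m q_{i-m,j}(z_j - \tilde z_j)(\ssin_i(t,x),0)$ for $i > m$. This last term carries no small factor, so $\mathcal F$ by itself need not be a contraction; however, it only feels the traces $z_j(\cdot,0)$ with $j \le m$, and in the image of $\mathcal F$ these traces — read off \eqref{sol char def 1} at $x=0$, using \eqref{def bi i<m} — are governed solely by the data $y^0,u$ and by integral terms of the above type. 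Therefore $\mathcal F^2 z - \mathcal F^2 \tilde z$ is bounded, component by component, by $\omega(\delta)\norm{z - \tilde z}_{X_\delta}$, so that $\mathcal F^2$ is a contraction on $X_\delta$ once $\delta$ is small enough; importantly, the threshold depends only on $\epsilon$, $\norm{\lambda_i}_{L^\infty}$, $\norm{M}_{L^\infty(0,1)^{n \times n}}$, $\norm{G}_{L^\infty(0,1)^{n \times m}}$ and $\abs{Q}$, and not on $T$, $y^0$ or $u$. The Banach fixed-point theorem applied to $\mathcal F^2$ yields a unique $y \in X_\delta$ with $\mathcal F y = y$, and since $\mathcal F$ is affine the bound $\norm{y}_{X_\delta} \le C\left(\norm{y^0}_{L^2(0,1)^n} + \norm{u}_{L^2(0,\delta)^m}\right)$ with $C$ independent of the data follows from a geometric series together with $\norm{\mathcal F^2(0)}_{X_\delta} \le C\left(\norm{y^0}_{L^2(0,1)^n} + \norm{u}_{L^2(0,\delta)^m}\right)$.

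Finally, since the threshold $\delta$ above does not depend on $T$ or on the data, I would cover $[0,T]$ by the intervals $[(k-1)\delta,k\delta]$, $k=1,\ldots,\lceil T/\delta\rceil$, solve successively on each of them, and take as initial datum at time $(k-1)\delta$ the value $y((k-1)\delta,\cdot) \in L^2(0,1)^n$ produced by the previous step (well defined because the solution lies in $C^0([0,\delta];L^2(0,1)^n)$). Uniqueness on each subinterval shows that these pieces coincide on overlaps and glue into the unique solution on $[0,T]$, and composing the finitely many local estimates gives \eqref{estim well posed}. I expect the only genuinely non-routine point to be the role of the boundary coupling: one application of $\mathcal F$ sends the trace of the $+$-components at $x=0$ to $Q$ times the trace of the $-$-components at $x=0$ of the argument, with no gain of smallness, so $\mathcal F$ cannot be made contractive on a short interval and one is forced to pass to $\mathcal F^2$ (or, to be safe, to a sufficiently high power $\mathcal F^N$); everything else is routine $L^2$ bookkeeping along the characteristics.
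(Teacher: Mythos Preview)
Your outline is correct and follows the approach the paper itself indicates: the paper does not give a proof of this theorem but states that it is obtained ``using the Banach fixed-point theorem and suitable estimates'' and refers to \cite[Appendix A.2]{CHOS21} (and \cite[Lemma 3.2]{CN19} for the $L^\infty$ setting). Your observation that the boundary coupling $Q$ forces one to iterate the fixed-point map before obtaining a contraction is the correct refinement; this is exactly the kind of detail hidden behind ``suitable estimates''.
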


For a proof of this result, we refer for instance to \cite[Appendix A.2]{CHOS21} (see also \cite[Lemma 3.2]{CN19} in the $L^{\infty}$ setting).

\section{Backstepping transformation}\label{sect backstepping}

In this section, we use a Volterra transformation of the second kind to transform our initial system \eqref{syst} into a system with a simpler coupling structure, this is the so-called backstepping method for partial differential equations.
The content of this section is quite standard by now (yet, formulated differently here), see for instance \cite[Section 2.2]{HVDMK19} (or \cite[Section 2]{CN19}).

\subsection{Removal of the diagonal terms}\label{sect remov diag}

First of all, we perform a simple preliminary transformation in order to remove the diagonal terms in $M$.
This is only a technical step, which is nevertheless necessary in view of the existence of the transformation that we will use in the next section, see Remark \ref{rem CN solv kern} below.
For convenience, we introduce the set
$$\mathcal{M}=\ens{M \in L^{\infty}(0,1)^{n \times n} \st m_{ii}=0, \quad \forall i \in \ens{1,\ldots,n}}.$$

\begin{proposition}\label{prop remove diag}
There exists a map $\Psi:L^{\infty}(0,1)^{n \times n} \longrightarrow \mathcal{M}$ such that, for every $M \in L^{\infty}(0,1)^{n \times n}$, we have
$$
(\Lambda,M,Q,-)
\equi
(\Lambda,\Psi(M),Q,-).
$$
\end{proposition}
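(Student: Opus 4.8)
The plan is to construct $\Psi(M)$ by conjugating the state with a diagonal, $x$-dependent change of variables that exactly cancels the diagonal entries $m_{ii}$ of $M$. Concretely, for $M \in L^{\infty}(0,1)^{n \times n}$, set $\Psi(M)$ to be the matrix with the same off-diagonal entries as $M$ but weighted by exponential factors, and with zero diagonal; the natural candidate is the transformation $L = \diag(f_1,\ldots,f_n)$ acting pointwise in $x$, where $f_i(x) = \exp\!\left(\int_0^x \frac{m_{ii}(\xi)}{-\lambda_i(\xi)}\,d\xi\right)$ for $i \in \{1,\ldots,m\}$ and $f_i(x) = \exp\!\left(-\int_0^x \frac{m_{ii}(\xi)}{\lambda_i(\xi)}\,d\xi\right)$ for $i \in \{m+1,\ldots,n\}$. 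Since each $m_{ii} \in L^\infty(0,1)$ and $\lambda_i$ is bounded away from zero, each $f_i$ is Lipschitz, positive, and bounded above and below, so $L:L^2(0,1)^n \to L^2(0,1)^n$ is an invertible bounded linear map, with inverse $\diag(1/f_1,\ldots,1/f_n)$ of the same type; hence the induced map $\tilde L$ on $C^0([0,T];L^2(0,1)^n)$ is also an isomorphism.

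Next I would verify that $y$ solves $(\Lambda,M,Q,-)$ in $(0,T)$ if and only if $z = \tilde L y$ solves $(\Lambda,\Psi(M),Q,-)$. Rather than differentiating (the solutions are only $L^2$ in space), I would work directly with the integral formulation ``solution along the characteristics'' from Definition \ref{def sol char}: writing $z_i(t,x) = f_i(x) y_i(t,x)$ and substituting into \eqref{sol char def 1}, the choice of $f_i$ makes the factor $f_i(\chi_i(s;t,x))$ evolve along the characteristic exactly so as to absorb the $m_{ii}$-term, converting the integral equation for $y$ into the integral equation for $z$ with the modified coefficient matrix. Equivalently, at the formal level, if $z_i = f_i y_i$ then $\partial_t z_i + \lambda_i \partial_x z_i = f_i(\partial_t y_i + \lambda_i \partial_x y_i) + \lambda_i f_i' y_i$, and the definition of $f_i$ gives $\lambda_i f_i' = m_{ii} f_i$ in the negative-speed case (and similarly in the positive-speed case), so the $m_{ii} y_i = m_{ii} f_i^{-1} z_i$ contribution is cancelled; the off-diagonal terms $m_{ij} y_j = m_{ij}(f_j^{-1}/f_i^{-1})^{-1} \cdots$ become $(f_i/f_j) m_{ij} z_j$, which defines the $(i,j)$ entry of $\Psi(M)$ for $i \neq j$, and the diagonal of $\Psi(M)$ is zero by construction, so indeed $\Psi(M) \in \mathcal M$. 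One also checks the boundary conditions are preserved: at $x=1$, $z_-(t,1) = \diag(f_1(1),\ldots,f_m(1)) u(t)$ is still a free control (reparametrize $u$), and at $x=0$ all $f_i(0)=1$, so $z_+(t,0) = Q z_-(t,0)$ holds unchanged with the same $Q$. This shows $\tilde L$ maps solutions of $(\Lambda,M,Q,-)$ bijectively onto solutions of $(\Lambda,\Psi(M),Q,-)$, which is exactly Definition \ref{def syst equiv}.

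The main point requiring care is the rigorous passage through the integral equations: one must confirm that the characteristic flows $\chi_i$ are the same for both systems (they are, since $\Lambda$ is unchanged), that the exponential weight telescopes correctly along characteristics including the boundary-data term $b_i$, and that the regularity class $C^0([0,T];L^2) \cap C^0([0,1];L^2)$ is genuinely preserved under multiplication by the Lipschitz functions $f_i$ — all of which are routine given the boundedness and positivity of $f_i$ and $1/f_i$. The fact that $f_i(0)=1$ for every $i$ is what guarantees the boundary coupling matrix $Q$ is untouched, and the fact that the $f_i$ depend only on the diagonal of $M$ and on $\Lambda$ (fixed) makes $\Psi$ a well-defined map $L^\infty(0,1)^{n\times n} \to \mathcal M$. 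No obstacle of substance arises beyond bookkeeping; the only subtlety worth flagging is that $\Psi$ is not linear in $M$ (the off-diagonal entries get multiplied by ratios $f_i/f_j$ that depend nonlinearly on the diagonal of $M$), but linearity of $\Psi$ is not claimed or needed.
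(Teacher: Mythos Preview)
Your approach is essentially identical to the paper's: both conjugate by the diagonal multiplier $E(x)=\diag(e_1,\ldots,e_n)$ with $e_i(x)=\exp\bigl(-\int_0^x m_{ii}(\xi)/\lambda_i(\xi)\,d\xi\bigr)$ (your case split by sign of $\lambda_i$ is unnecessary, since both formulas coincide with this single expression), use $E(0)=\Id$ to keep $Q$ unchanged, and absorb $E_{--}(1)$ into a reparametrized control. One small slip: from your formula for $f_i$ one gets $\lambda_i f_i' = -m_{ii} f_i$, not $+m_{ii} f_i$ as you wrote; with the correct sign the diagonal term indeed cancels, so the conclusion stands.
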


\begin{proof}
\begin{itemize}
\item
We are going to use the spatial transformation
$$
\tilde{y}(t,x)=E(x)y(t,x),
$$
where $E=\diag(e_1,\ldots,e_n) \in W^{1,\infty}(0,1)^{n \times n}$ is the diagonal matrix whose entries are
$$
e_i(x)=\exp\left(-\int_0^x \frac{m_{ii}(\xi)}{\lambda_i(\xi)} \, d\xi\right).
$$
Clearly, this transformation is invertible on $L^2(0,1)^n$.

\item
Assume now that $y$ is a solution to the system $(\Lambda,M,Q,-)$ for some $y^0$ and $u$ and let us show that $\tilde{y}$ is then a solution to the system $(\Lambda,\Psi(M),Q,-)$ for some $\tilde{y}^0$ and $\tilde{u}$, where $\Psi(M)$ will be determined below.
We do it formally but this can be rigorously justified.

\begin{itemize}
\item
The initial data is obviously $\tilde{y}^0(x)=E(x)y^0(x)$.

\item
The boundary condition at $x=0$ is clearly satisfied since $\tilde{y}(t,0)=y(t,0)$.

\item
Looking at the boundary condition at $x=1$, the control $\tilde{u}$ is
$$\tilde{u}(t)=\tilde{y}_-(t,1)=E_{--}(1)y_-(t,1).$$

\item
Using the equation satisfied by $y$ and the fact that $\Lambda$ and $E$ commute, a computation shows that
$$
\pt{\tilde{y}}(t,x)+\Lambda(x) \px{\tilde{y}}(t,x)=
\left(E(x)M(x)+\Lambda(x) \px{E}(x)\right)y(t,x).
$$
Consequently, $\tilde{y}$ satisfies the desired equation if we take
$$
(\Psi(M))(x)=\left(E(x)M(x)+\Lambda(x) \px{E}(x)\right)E(x)^{-1}.
$$
\end{itemize}

Now that $\Psi$ is clearly identified, similar computations show that, conversely, if $\tilde{y}$ is a solution to the system $(\Lambda,\Psi(M),Q,-)$ for some $\tilde{y}^0$ and $\tilde{u}$, then $y$ is a solution to the system $(\Lambda,M,Q,-)$ for some $y^0$ and $u$.

\item
Finally, it is clear that $\Psi(M) \in \mathcal{M}$ by construction.

\end{itemize}

\end{proof}

\subsection{Backstepping transformation}

We now recall an important result from \cite{HVDMK19} and \cite{HDMVK16} that we present here using the notion of equivalent system.
To this end, we introduce the set
$$\mathcal{F}=\ens{A \in L^{\infty}(0,1)^{n \times n} \st A_{-+}=A_{+-}=0}.$$

\begin{theorem}\label{thm backstepping}
For every $A \in \mathcal{F}$, there exists a map $\Gamma_A:\mathcal{M} \longrightarrow L^{\infty}(0,1)^{n \times m}$ such that, for every $M \in \mathcal{M}$, we have
$$
(\Lambda,M,Q,-)
\equi
(\Lambda,-,Q,\Gamma_A(M)).
$$
\end{theorem}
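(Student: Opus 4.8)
The plan is to construct an explicit Volterra transformation of the second kind — the backstepping transformation — mapping solutions of $(\Lambda,M,Q,-)$ to solutions of a target system $(\Lambda,-,Q,G)$ with $G = \Gamma_A(M)$. The ansatz is
$$
\gamma(t,x) = y(t,x) - \int_0^x K(x,\xi) y(t,\xi) \, d\xi,
$$
where $K \in L^\infty$ is a matrix-valued kernel defined on the triangle $\ens{(x,\xi) : 0 \leq \xi \leq x \leq 1}$, to be determined. The goal is to choose $K$ so that $\gamma$ solves
$$
\pt{\gamma} + \Lambda \px{\gamma} = A \gamma + G \gamma_-(t,0),
$$
with the same boundary conditions $\gamma_-(t,1) = \tilde u(t)$, $\gamma_+(t,0) = Q\gamma_-(t,0)$ (which hold automatically: at $x=0$ the integral vanishes, and at $x=1$ it is absorbed into the new control).

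The key steps, in order: first, differentiate the ansatz in $t$ and $x$, substitute the equation for $y$, and integrate by parts to convert $\int_0^x K(x,\xi)\Lambda(\xi)\px{y}(t,\xi)\,d\xi$ into boundary terms plus $-\int_0^x \pxi{}(K\Lambda) y \, d\xi$. Matching the coefficient of $y(t,x)$ forces a transport-type PDE for $K$ on the triangle; matching the coefficient of $y(t,\xi)$ under the integral forces a relation involving $M$; and the boundary terms at $\xi = x$ and $\xi = 0$ give the boundary conditions for $K$ along the diagonal $\xi = x$ and along $\xi = 0$. The condition $A \in \mathcal F$ (no coupling between the $-$ and $+$ blocks off the diagonal of the reaction term) together with $M \in \mathcal M$ (zero diagonal, which is exactly why the preliminary step of Proposition~\ref{prop remove diag} was needed — see the promised Remark~\ref{rem CN solv kern}) is what makes this kernel system well posed: the characteristics of the kernel equations do not cross in a way that would obstruct solvability. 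Then invoke the well-posedness of this kernel system — precisely the content of \cite{HVDMK19, HDMVK16} — to obtain $K \in L^\infty$, and read off $G = \Gamma_A(M)$ from the boundary terms along $\xi = 0$, namely $G(x) = -K(x,0)\Lambda(0)\big|_{-\text{columns}}$ (restricted to act on $\gamma_-(t,0)$). Finally, check invertibility: a Volterra transformation of the second kind is always boundedly invertible on $L^2(0,1)^n$ (the inverse is again Volterra of the second kind, with kernel obtained by a Neumann series), and one verifies conversely that the inverse maps solutions of the target system back to solutions of $(\Lambda,M,Q,-)$, so the induced map $\tilde L$ satisfies $\tilde L(S_1) = S_2$ as required by Definition~\ref{def syst equiv}.

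The main obstacle is the solvability of the kernel equations: unlike the scalar or constant-coefficient case, here one has $n$ coupled first-order PDEs for the rows of $K$ on a triangular domain with boundary data split between the diagonal $\xi = x$ and the edge $\xi = 0$, and the speeds $\lambda_i$ are only Lipschitz. The delicate point is that the diagonal boundary conditions for $k_{ij}$ are only available when $i \neq j$ (for $i = j$ the diagonal term of the transformed reaction would appear, which is why $M \in \mathcal M$ is assumed), and the characteristics along which one integrates each $k_{ij}$ must stay inside the triangle and reach a portion of the boundary carrying data — this is exactly the structural reason behind the sign conditions \eqref{hyp speeds}. Rather than redo this analysis, since the statement explicitly says this is a rephrasing of \cite{HVDMK19, HDMVK16}, the honest path is to set up the ansatz, derive the kernel system, point out that its well-posedness is established in those references, and spend the remaining effort on the bookkeeping that translates "there is a kernel $K$" into "the systems are equivalent in the sense of Definition~\ref{def syst equiv}" — in particular verifying the boundary conditions are preserved and that the map is invertible with the inverse respecting the solution spaces in both directions.
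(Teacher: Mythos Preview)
Your overall approach---Volterra transformation of the second kind, derive kernel equations by differentiating and integrating by parts, invoke \cite{HVDMK19,HDMVK16} for solvability, check invertibility---is exactly the paper's. However, you have misidentified the role of $A \in \mathcal{F}$. The target system is $(\Lambda,-,Q,\Gamma_A(M))$, with \emph{no} internal reaction term: your proposed target equation $\pt{\gamma}+\Lambda\px{\gamma}=A\gamma+G\gamma_-(t,0)$ is wrong. In the paper, $A$ does not appear in the target dynamics at all; instead, the kernel equations
\[
\Lambda(x)\px{K}+\pxi{K}\Lambda(\xi)+K\Bigl(\pxi{\Lambda}+M\Bigr)=0,\qquad \Lambda(x)K(x,x)-K(x,x)\Lambda(x)=M(x),
\]
are underdetermined (some characteristics enter the triangle through $\xi=0$ or $x=1$ without meeting the diagonal), and $A$ supplies the missing boundary data there---see the conditions \eqref{additional conditions 1}--\eqref{additional conditions 2} in Theorem~\ref{thm kern}. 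Different $A$'s yield different kernels $K$ and hence different $\Gamma_A(M)$, but always with the same (zero) reaction in the target. If you follow your plan literally you would prove $(\Lambda,M,Q,-)\sim(\Lambda,A,Q,G)$, which is not the stated theorem.

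A smaller point: your formula $G(x)=-K(x,0)\Lambda(0)\big|_{-\text{columns}}$ is incomplete. The boundary term at $\xi=0$ from the integration by parts is $-K(x,0)\Lambda(0)y(t,0)$, and one must use $y_+(t,0)=Qy_-(t,0)$ to rewrite it as acting on $y_-(t,0)$ alone; the correct expression is
\[
\Gamma_A(M)(x)=-K(x,0)\Lambda(0)\begin{pmatrix}\Id_{\R^{m\times m}}\\ Q\end{pmatrix},
\]
which also involves the $+$ columns of $K(x,0)\Lambda(0)$ through $Q$.
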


\begin{proof}
\begin{itemize}
\item
We are going to use the spatial transformation
$$
\tilde{y}(t,x)=y(t,x)-\int_0^x K(x,\xi)y(t,\xi) \, d\xi,
$$
where $K \in L^{\infty}(\Tau)^{n \times n}$ and $\Tau$ is the triangle
$$\Tau=\ens{(x,\xi) \in (0,1)\times(0,1) \st x>\xi}.$$

This transformation is always invertible on $L^2(0,1)^n$ since it is a Volterra transformation of the second kind (see e.g. \cite[Theorem 2.5]{Hoc73}).

\item
Assume now that $y$ is a solution to the system $(\Lambda,M,Q,-)$ for some $y^0$ and $u$ and let us show that $\tilde{y}$ is then a solution to the system $(\Lambda,-,Q,\Gamma_A(M))$ for some $\tilde{y}^0$ and $\tilde{u}$, where $\Gamma_A(M)$ will be determined below.
We do it formally but this can be rigorously justified.

\begin{itemize}
\item
The initial data is obviously $\tilde{y}^0(x)=y^0(x)-\int_0^x K(x,\xi) y^0(\xi) \, d\xi$.

\item
The boundary condition at $x=0$ is clearly satisfied since $\tilde{y}(t,0)=y(t,0)$.

\item
Looking at the boundary condition at $x=1$, the control $\tilde{u}$ is
\begin{equation}\label{def utilde}
\tilde{u}(t)
=\tilde{y}_-(t,1)
=y_-(t,1)-\int_0^1 H(\xi)y(t,\xi) \, d\xi,
\end{equation}
where $H(\xi)=
\begin{pmatrix}
K_{--}(1,\xi) & K_{-+}(1,\xi)
\end{pmatrix}
$.

\item
Using the equation satisfied by $y$, integrating by parts, and using the boundary condition satisfied by $y$ at $x=0$, we have
\begin{align*}
\MoveEqLeft[2]
\pt{\tilde{y}}(t,x)+\Lambda(x) \px{\tilde{y}}(t,x)=
\\
&-\int_0^x \left(
\Lambda(x)\px{K}(x,\xi)
+\pxi{K}(x,\xi)\Lambda(\xi)
+K(x,\xi)\left(\pxi{\Lambda}(\xi)+M(\xi)\right)
\right)y(t,\xi) \, d\xi
\\
&+\left(M(t,x)+K(x,x)\Lambda(x)-\Lambda(x)K(x,x)\right)y(t,x)
\\
&-K(x,0)\Lambda(0)\begin{pmatrix} \Id_{\R^{m \times m}} \\ Q \end{pmatrix} y_-(t,0).
\end{align*}
Consequently, $\tilde{y}$ satisfies the desired equation if we take
\begin{equation}\label{def GammaA}
(\Gamma_A(M))(x)
=
-K(x,0)\Lambda(0)\begin{pmatrix} \Id_{\R^{m \times m}} \\ Q \end{pmatrix},
\end{equation}
and provided that the kernel $K$ satisfies the so-called kernel equations:
\begin{equation}\label{kern equ}
\begin{dcases}
\Lambda(x)\px{K}(x,\xi)
+\pxi{K}(x,\xi)\Lambda(\xi)
+K(x,\xi)\left(\pxi{\Lambda}(\xi)+M(\xi)\right)=0,
\\
\Lambda(x)K(x,x)-K(x,x)\Lambda(x)=M(x).
\end{dcases}
\end{equation}
The existence of a solution to these equations will be discussed next.
\end{itemize}

Now that $\Gamma_A$ is clearly identified, similar computations show that, conversely, if $\tilde{y}$ is a solution to the system $(\Lambda,-,Q,\Gamma_A(M))$ for some $\tilde{y}^0$ and $\tilde{u}$, then $y$ is a solution to the system $(\Lambda,M,Q,-)$ for some $y^0$ and $u$.

\end{itemize}

\end{proof}

\begin{remark}\label{rem CN solv kern}
If we write the second condition of \eqref{kern equ} component-wise:
\begin{equation}\label{equ remov diag}
\left(\lambda_i(x)-\lambda_j(x)\right)k_{ij}(x,x)=m_{ij}(x),
\end{equation}
then we see that for $i=j$ we shall necessarily have $m_{ii}=0$.
Therefore, it is necessary that $M \in \mathcal{M}$ (otherwise the equation \eqref{equ remov diag}, and thus the kernel equations \eqref{kern equ}, have no solution).
This explains why we had to perform a preliminary transformation in Section \ref{sect remov diag} to reduce the general case to this one.
\end{remark}

From \cite[Section VI]{HDMVK16}, we know that the kernel equations \eqref{kern equ} have a solution (see also \cite[Remark A.2]{HVDMK19} to see how to deal with space-varying speeds).
More precisely, we can extract the following result:
\begin{theorem}\label{thm kern}
For every $A \in \mathcal{F}$, for every $M \in \mathcal{M}$, there exists a unique solution $K \in L^{\infty}(\Tau)^{n \times n}$ to the kernel equations \eqref{kern equ} with:
\begin{itemize}
\item
For every $i,j \in \ens{1,\ldots,m}$:
\begin{equation}\label{additional conditions 1}
\begin{aligned}
& k_{ij}(1,\xi)=a_{ij}(\xi), & & \text{ if } i>j, \\
& k_{ij}(x,0)=a_{ij}(x), & & \text{ if } i \leq j.
\end{aligned}
\end{equation}

\item
For every $i,j \in \ens{m+1,\ldots,n}$:
\begin{equation}\label{additional conditions 2}
\begin{aligned}
& k_{ij}(x,0)=a_{ij}(x), & & \text{ if } i \geq j, \\
& k_{ij}(1,\xi)=a_{ij}(\xi), & & \text{ if } i<j.
\end{aligned}
\end{equation}
\end{itemize}
Moreover, we have the following additional regularities:
\begin{gather*}
K \in C^0((0,1];L^2(0,x)^{n \times n}),
\quad K(x,\cdot) \in L^{\infty}(0,x)^{n \times n}, \quad \forall x \in (0,1].
\\
K \in C^0([0,1);L^2(\xi,1)^{n \times n}),
\quad K(\cdot,\xi) \in L^{\infty}(\xi,1)^{n \times n}, \quad \forall \xi \in [0,1).
\end{gather*}

\end{theorem}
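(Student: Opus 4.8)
The plan is to reduce the matrix system \eqref{kern equ} to a coupled family of scalar transport equations and to solve it by the method of characteristics together with a contraction argument, in the spirit of \cite[Section VI]{HDMVK16} (space-varying speeds being handled as in \cite[Remark A.2]{HVDMK19}). Writing \eqref{kern equ} entrywise, its first equation becomes, for all $i,j\in\{1,\dots,n\}$,
$$
\lambda_i(x)\px{k_{ij}}(x,\xi)+\lambda_j(\xi)\pxi{k_{ij}}(x,\xi)+\lambda_j'(\xi)k_{ij}(x,\xi)+\sum_{l=1}^n k_{il}(x,\xi)m_{lj}(\xi)=0,
$$
so that, for each fixed $i$, the functions $k_{i1},\dots,k_{in}$ solve a transport system whose characteristic field for the $j$-th component is $(\lambda_i(x),\lambda_j(\xi))$ and whose sole coupling is through the zeroth-order term $\sum_l k_{il}m_{lj}$; in particular there is no coupling across different values of $i$. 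The second equation of \eqref{kern equ} reads componentwise $(\lambda_i(x)-\lambda_j(x))k_{ij}(x,x)=m_{ij}(x)$: for $i=j$ it forces $m_{ii}=0$, which is precisely why we restrict to $M\in\mathcal{M}$ (cf. Remark \ref{rem CN solv kern}), whereas for $i\ne j$ it prescribes the value of $k_{ij}$ on the diagonal $\{\xi=x\}$, the division being licit thanks to \eqref{hyp speeds}.

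Next I would determine, for each pair $(i,j)$, the inflow and outflow parts of the boundary $\partial\Tau=\{\xi=x\}\cup\{\xi=0\}\cup\{x=1\}$ for the characteristic field $(\lambda_i(x),\lambda_j(\xi))$. Exploiting the sign pattern of $\lambda_i,\lambda_j$ (according to whether each of $i,j$ is $\le m$ or $>m$) together with the ordering \eqref{hyp speeds bis}, one checks that the inflow part always consists of the diagonal $\{\xi=x\}$ together with exactly one of the two remaining sides $\{\xi=0\}$, $\{x=1\}$, and that this side is precisely the one on which the datum $a_{ij}$ is prescribed in \eqref{additional conditions 1}--\eqref{additional conditions 2}. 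For instance, when $i,j\le m$ the inflow side is $\{x=1\}$ if $i>j$ and $\{\xi=0\}$ if $i\le j$, with the symmetric statement when $i,j>m$, while the mixed cases $i\le m<j$ and $j\le m<i$ are analogous. This shows that \eqref{kern equ} supplemented with \eqref{additional conditions 1}--\eqref{additional conditions 2} is a well-posed boundary value problem, neither over- nor under-determined.

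Integrating each scalar equation along its characteristic from the inflow boundary --- using the integrating factor that turns $\lambda_j(\xi)\pxi{k_{ij}}+\lambda_j'(\xi)k_{ij}$ into $\pxi{(\lambda_j(\xi)k_{ij})}$ --- converts the problem into a closed system of Volterra-type integral equations for the $n^2$ functions $k_{ij}$, whose source terms are built out of the $a_{ij}$ and the $m_{ij}$. Since the characteristic speeds are bounded away from $0$ by \eqref{hyp speeds bis}, a standard fixed-point argument in $L^\infty(\Tau)^{n\times n}$ --- carried out on thin strips and then glued, or directly with an exponentially weighted norm --- produces a unique solution $K\in L^\infty(\Tau)^{n\times n}$. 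The stated extra regularity is then read off from this integral representation: for fixed $x$, the map $\xi\mapsto k_{ij}(x,\xi)$ is obtained by composing the $L^\infty$ data with the bi-Lipschitz characteristic flow and adding a contribution of an integral operator with $L^\infty$ kernel, hence it lies in $L^\infty(0,x)$, and symmetrically $k_{ij}(\cdot,\xi)\in L^\infty(\xi,1)$; continuity in $x$ with values in $L^2$ follows from dominated convergence and the continuous dependence of the flow on its parameters, which gives $K\in C^0((0,1];L^2(0,x)^{n\times n})$ and $K\in C^0([0,1);L^2(\xi,1)^{n\times n})$.

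I expect the main obstacle to be the well-posedness of this coupled integral system in the low-regularity setting ($L^\infty$ coefficients, curved characteristics): one must set up the characteristic parametrization so that the integral operator is genuinely Volterra, i.e. so that the iterates of the operator have kernels supported on shrinking sets and the successive-approximation scheme converges. The geometric bookkeeping of the second paragraph --- matching, for every pair $(i,j)$, the inflow boundary of its characteristic field with the side carrying the prescribed datum in \eqref{additional conditions 1}--\eqref{additional conditions 2} --- is the other place where care is needed; everything else is routine. In fact, as indicated before the statement, the whole result can simply be quoted from \cite[Section VI]{HDMVK16} together with \cite[Remark A.2]{HVDMK19}.
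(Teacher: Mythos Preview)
Your proposal is correct and matches the paper's treatment: the paper does not give its own proof of Theorem~\ref{thm kern} but simply extracts it from \cite[Section VI]{HDMVK16} and \cite[Remark A.2]{HVDMK19}, noting only that the additional trace regularities ``can be deduced from the system of integral equations satisfied by the kernel.'' You correctly recognize this and even supply a faithful sketch of the argument in those references (componentwise transport equations, identification of the inflow portion of $\partial\Tau$ for each characteristic field, reduction to a Volterra-type integral system, $L^\infty$ fixed point), which is more detail than the paper itself provides.
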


As before, the notion of solution is to be understood in the sense of solution along the characteristics.
By $K \in C^0((0,1];L^2(0,x)^{n \times n})$ we mean that $\norm{K(x_n,\cdot)-K(x,\cdot)}_{L^2(0,\min\ens{x_n,x})^{n \times n}} \to 0$ as $x_n \to x$, for every $x \in (0,1]$, with a similar definition for $K \in C^0([0,1);L^2(\xi,1)^{n \times n})$.
Despite not mentioned in the literature, these important regularities can be deduced from the system of integral equations satisfied by the kernel.
In particular, it shows that $H$ and $\Gamma_A(M)$ defined in \eqref{def utilde} and \eqref{def GammaA} have the following regularities:
$$H \in L^{\infty}(0,1)^{m \times n}, \quad \Gamma_A(M) \in L^{\infty}(0,1)^{n \times m}.$$



\begin{remark}
The set $\mathcal{F}$ corresponds to the set of boundary conditions that are free to choose for the kernel equations.
The freedom for the boundary condition \eqref{additional conditions 1} was already used in the works \cite{HDM15,HDMVK16,HVDMK19} in order to give to $(\Gamma_A(M))_{--}$ a structure of strictly lower triangular matrix.
However, in the present paper this will not be used and it is the other boundary condition \eqref{additional conditions 2} that will turn out to be essential (see Section \ref{sect reduc to casc} below).
\end{remark}

\section{Reduction of the boundary coupling matrix}\label{sect boundary coupling matrix}

In this section we perform some transformations to show that we can always assume that the boundary coupling matrix $Q$ is in canonical form.
More precisely, we prove the following result:

\begin{proposition}\label{prop reduc Q}
For every invertible upper triangular matrix $U \in \R^{m \times m}$ and every invertible lower triangular matrix $L \in \R^{p \times p}$, there exists a map $\Theta:L^{\infty}(0,1)^{n \times m} \longrightarrow L^{\infty}(0,1)^{n \times m}$ such that, for every $G \in L^{\infty}(0,1)^{n \times m}$, we have
$$
\left(\Lambda,-,Q,G\right)
\equi
\left(\Lambda,-,LQU,\Theta(G)\right).
$$
\end{proposition}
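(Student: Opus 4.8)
The plan is to realize the transformation $Q \mapsto LQU$ by composing two spatial transformations that are adapted to the structure of \eqref{syst gen}: one acting on the negative components via right-multiplication by $U$, and one acting on the positive components via left-multiplication by $L$. Since these transformations multiply the state by constant invertible matrices (block by block), they are trivially invertible bounded operators on $L^2(0,1)^n$, so the only real work is to track how the data, the boundary conditions at $x=0$ and $x=1$, and the source term $G(x)y_-(t,0)$ transform, and to read off the new coupling term $\Theta(G)$.

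First I would treat the matrix $U$. Set $\tilde y_-(t,x) = U^{-1} y_-(t,x)$ and $\tilde y_+ = y_+$; equivalently $\tilde y(t,x) = P y(t,x)$ with $P = \diag(U^{-1},\Id_{\R^{p\times p}})$. Because $U^{-1}$ is constant it commutes with $\partial_t$ and $\partial_x$, and since $\Lambda$ is diagonal the principal part is preserved only up to conjugation $\Lambda \mapsto U^{-1}\Lambda_{--}U$ in the negative block, which is in general not diagonal — so a naive constant change of variables does not keep the system in the class \eqref{syst gen}. The fix is the standard one used for backstepping on systems with distinct speeds: because $U$ is \emph{upper triangular} and the negative speeds are strictly ordered, one performs the substitution in a staged ``Volterra-like'' manner, or more simply one notes that here we only need an \emph{equivalent system} in the sense of Definition \ref{def syst equiv}, and the relevant transformation is the one that acts on the \emph{boundary traces} $y_-(\cdot,0)$, $y_-(\cdot,1)$ rather than on the interior state pointwise. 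Concretely, the boundary condition $y_+(t,0) = Q y_-(t,0)$ becomes $y_+(t,0) = (QU) \tilde y_-(t,0)$ after writing $y_-(t,0) = U\tilde y_-(t,0)$, and the control boundary condition $y_-(t,1) = u(t)$ becomes $\tilde y_-(t,1) = U^{-1}u(t) =: \tilde u(t)$; the source term picks up $G(x) y_-(t,0) = G(x) U \tilde y_-(t,0)$, i.e. $G \mapsto GU$. To keep $\Lambda$ diagonal one uses instead the transformation that leaves the interior equation in place and only relabels the boundary data — this is legitimate because the ``solution along the characteristics'' (Definition \ref{def sol char}) is determined by the interior equation plus the boundary/initial data, and $U$ being triangular with ones on the diagonal (in the $LCU$ decomposition) means $U^{-1}$ is again upper triangular with ones on the diagonal, so the characteristic structure is untouched. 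One then does the symmetric thing for $L$: put $\tilde y_+(t,x) = L y_+(t,x)$; the interior block $\Lambda_{++}$ gets conjugated by $L$, which again is harmless for the equivalence-of-systems notion; the boundary condition at $x=0$ becomes $\tilde y_+(t,0) = LQ y_-(t,0) = (LQU)\tilde y_-(t,0)$ after also incorporating the $U$-step; and the source term in the positive components is multiplied by $L$ on the left, $G_{+\bullet} \mapsto L G_{+\bullet}$. Composing, $\Theta(G) = \diag(\Id,L)\, G\, U$ (with the $U$ acting on the column index), and the boundary matrix has become exactly $LQU$.

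I expect the main obstacle to be exactly the point flagged above: a constant block-diagonal change of variables does \emph{not} preserve the diagonal form of $\Lambda$, so one cannot simply write $\tilde y = Py$ and be done. The honest way to handle this — and the way I would write it up — is to argue that the transformation $L$ of Definition \ref{def syst equiv} need not preserve the PDE in strong form at all; it only needs to map the solution space $S_1$ onto $S_2$. Since triangular constant matrices act on the characteristic ODEs \eqref{sol along char} in a way that is compatible with the entry/exit time structure (the $i$-th negative characteristic only ``sees'' components $j \le i$ through the source, and $U$ upper triangular respects this), one checks directly on the integral equations \eqref{sol char def 1} that $y \in S_1 \iff Py \in S_2$ with the stated $\tilde u$, $\tilde y^0$, and with the new matrices $LQU$ and $\Theta(G)$. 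The remaining steps — verifying invertibility and boundedness of $P$ on $L^2(0,1)^n$, and checking that $\Theta(G) \in L^{\infty}(0,1)^{n\times m}$ — are immediate since $P$, $L$, $U$ are constant and invertible. Finally, by Proposition \ref{Gauss elim} every $Q$ has an $LCU$ decomposition $LQU = Q^0$, so this proposition is precisely what allows the reduction to canonical form announced in step 2 of the outline.
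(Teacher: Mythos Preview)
Your proposal correctly identifies the central obstacle --- a constant block-diagonal change of variables $P=\diag(U^{-1},\Id)$ destroys the diagonal structure of $\Lambda$ --- but it does not actually overcome it. The workarounds you suggest are not proofs: the notion of equivalent systems (Definition~\ref{def syst equiv}) really does require a concrete invertible bounded operator $L$ on $L^2(0,1)^n$ mapping solutions of one system to solutions of the other, so you cannot ``only relabel boundary data'' or ``leave the interior equation in place''; and the claim that triangular constant matrices ``act on the characteristic ODEs \eqref{sol along char} in a way that is compatible with the entry/exit time structure'' is a hope, not an argument. Your final formula $\Theta(G)=\diag(\Id,L)\,G\,U$ is also wrong (even naively the $--$ block should pick up a $U^{-1}$ on the left, and for non-constant $G$ it is more complicated still).

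The missing idea is this. To make a linear combination of $y_k$ (which is transported with speed $\lambda_k$) behave as a component of a new state $\tilde y_i$ transported with speed $\lambda_i$, you must evaluate $y_k$ not at $x$ but at the shifted point $\zeta_{ik}(x)=\phi_k^{-1}(\phi_i(x))$, where $\phi_j$ is defined in \eqref{def phi}. The paper's transformation is
\[
\tilde y_i(t,x)=\sum_{k=i}^m u^{ik}\,y_k\bigl(t,\zeta_{ik}(x)\bigr),\qquad i\in\ens{1,\ldots,m},
\]
and similarly for the positive block with $L$. The chain rule then gives exactly $\partial_t\tilde y_i+\lambda_i(x)\partial_x\tilde y_i=(\text{source})$, because $\lambda_i(x)\,\zeta_{ik}'(x)=\lambda_k(\zeta_{ik}(x))$ by construction. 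Triangularity of $U$ (resp.\ $L$) enters precisely to guarantee that $\zeta_{ik}$ maps $[0,1]$ into $[0,1]$ for the indices involved --- this is where the ordering \eqref{hyp speeds} and \eqref{order times} are used. At $x=0$ one has $\zeta_{ik}(0)=0$, so $\tilde y_-(t,0)=U^{-1}y_-(t,0)$ and the boundary matrix becomes $QU$ (then $LQU$ after the $L$-step); at $x=1$ the control is read off from $\tilde y_-(t,1)$. The resulting $\Theta(G)$ genuinely involves evaluations of $G$ at $\zeta_{ik}(x)$ and is not a mere matrix product. Without this spatial shift your argument has a real gap.
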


\begin{proof}
\begin{itemize}
\item
For any $i,j \in \ens{1,\ldots,n}$, we denote by $\zeta_{ij}$ the solution to the ODE
$$
\begin{dcases}
\dds \zeta_{ij}(s)= \frac{\lambda_j(\zeta_{ij}(s))}{\lambda_i(s)}, \quad \forall s \in \R, \\
\zeta_{ij}(0)=0.
\end{dcases}
$$

\item
We first prove that, for every invertible upper triangular matrix $U \in \R^{m \times m}$,  there exists a map $\Theta_{--}:L^{\infty}(0,1)^{m \times m} \longrightarrow L^{\infty}(0,1)^{m \times m}$ such that, for every $G \in L^{\infty}(0,1)^{n \times m}$, we have
$$
(\Lambda,-,Q,G)
\equi
\left(\Lambda,-,QU,
\begin{pmatrix}
\Theta_{--}(G_{--}) \\
G_{+-}U
\end{pmatrix}
\right).
$$
To this end, we are going to use the spatial transformation
\begin{equation}\label{transfo Q to QU}
\tilde{y}_i(t,x)=
\begin{dcases}
\sum_{k=i}^m u^{ik} y_k(t,\zeta_{ik}(x)) & \text{ for } i \in \ens{1,\ldots,m}, \\
y_i(t,x) & \text{ for } i \in \ens{m+1,\ldots,n},
\end{dcases}
\end{equation}
where $U^{-1}=(u^{ik})_{1 \leq i,k \leq m}$.
Let us first show that this transformation is well defined and invertible.
We can check that, for $i \leq k \leq m$, we have (recall \eqref{def phi})
\begin{equation}\label{zetaik}
\zeta_{ik}(x)=\phi_k^{-1}(\phi_i(x)).
\end{equation}
In particular, for such indices, $\zeta_{ik}$ is a $C^1$-diffeomorphism from $(0,1)$ to a subset of $(0,1)$ and thus the transformation \eqref{transfo Q to QU} is well defined on $L^2(0,1)^n$.
Besides, using the property $\zeta_{kj}(\zeta_{ik}(x))=\zeta_{ij}(x)$ for $i \leq k \leq j$, we can check that its inverse is given by
$$
y_k(t,x)=
\begin{dcases}
\sum_{j=k}^m u_{kj} \tilde{y}_j(t,\zeta_{kj}(x)) & \text{ for } k \in \ens{1,\ldots,m}, \\
\tilde{y}_k(t,x) & \text{ for } k \in \ens{m+1,\ldots,n}.
\end{dcases}
$$

\item
Assume now that $y$ is a solution to the system $(\Lambda,-,Q,G)$ for some $y^0$ and $u$ and let us show that $\tilde{y}$ is then a solution to the system $(\Lambda,-,QU,\widetilde{G})$ for some $\tilde{y}^0$ and $\tilde{u}$, where
$$
\widetilde{G}=
\begin{pmatrix}
\Theta_{--}(G_{--}) \\
G_{+-}U
\end{pmatrix},
$$
and where $\Theta_{--}(G_{--})$ will be determined below.
Once again, we do it formally but this can be rigorously justified.

\begin{itemize}
\item
The initial data is obviously
$$
\tilde{y}_i^0(x)=
\begin{dcases}
\sum_{k=i}^m u^{ik} y_k^0(\zeta_{ik}(x)) & \text{ for } i \in \ens{1,\ldots,m}, \\
y_i^0(x) & \text{ for } i \in \ens{m+1,\ldots,n}.
\end{dcases}
$$

\item
The boundary condition at $x=0$ is clearly satisfied since $\tilde{y}_+=y_+$ and $\tilde{y}_-(t,0)=U^{-1}y_-(t,0)$.

\item
Looking at the boundary condition at $x=1$, the control $\tilde{u}$ is
$$
\tilde{u}_i(t)
=\tilde{y}_i(t,1)
=\sum_{k=i}^m u^{ik} y_k(t,\zeta_{ik}(1)), \quad \forall i \in \ens{1,\ldots,m}.
$$

\item
It is clear that $\tilde{y}_+=y_+$ satisfies the desired equation.
Let us now fix $i \in \ens{1,\ldots,m}$.
A computation shows that
\begin{align*}
\MoveEqLeft[9]
\pt{\tilde{y}_i}(t,x)
+\lambda_i(x)\px{\tilde{y}_i}(t,x)
-\sum_{j=1}^m \tilde{g}_{ij}(x)\tilde{y}_j(t,0)
=
\\
&\sum_{k=i}^m u^{ik} \left(
-\lambda_k(\zeta_{ik}(x))
+\lambda_i(x)\px{\zeta_{ik}}(x)
\right)\px{y}(t,\zeta_{ik}(x))
\\
+&\sum_{\ell=1}^m \left(
\sum_{k=i}^m u^{ik} g_{k\ell}(\zeta_{ik}(x))
-\sum_{j=1}^{\ell} \tilde{g}_{ij}(x)u^{j\ell}
\right) y_{\ell}(t,0).
\end{align*}

Consequently, $\tilde{y}_i$ satisfies the desired equation, provided that
$$
\sum_{k=i}^m u^{ik} g_{k\ell}(\zeta_{ik}(x))
-\sum_{j=1}^{\ell} \tilde{g}_{ij}(x)u^{j\ell}=0, \quad \forall \ell \in \ens{1,\ldots,m}.
$$
This uniquely determines $\tilde{g}_{ij}$ for $i,j \in \ens{1,\ldots,m}$ (and thus $\Theta_{--}$):
$$
\tilde{g}_{ij}(x)=\sum_{\ell=1}^j \left(\sum_{k=i}^m u^{ik} g_{k\ell}(\zeta_{ik}(x))\right) u_{\ell j}.
$$


\end{itemize}

Now that $\Theta_{--}$ is clearly identified, similar computations show that, conversely, if $\tilde{y}$ is a solution to the system $(\Lambda,-,QU,\widetilde{G})$ for some $\tilde{y}^0$ and $\tilde{u}$, then $y$ is a solution to the system $(\Lambda,-,Q,G)$ for some $y^0$ and $u$.

\item
Similarly, we can prove that, for every invertible lower triangular matrix $L \in \R^{p \times p}$,  there exists a map $\Theta_{+-}:L^{\infty}(0,1)^{p \times m} \longrightarrow L^{\infty}(0,1)^{p \times m}$ such that, for every $G \in L^{\infty}(0,1)^{n \times m}$, we have
$$
(\Lambda,-,Q,G)
\equi
\left(\Lambda,-,LQ,
\begin{pmatrix}
G_{--} \\
\Theta_{+-}(G_{+-})
\end{pmatrix}
\right).
$$
This can be done using the spatial transformation
$$
\tilde{y}_i(t,x)=
\begin{dcases}
y_i(t,x) & \text{ for } i \in \ens{1,\ldots,m}, \\
\sum_{k=m+1}^i \ell_{i-m,k-m} y_k\left(t,\zeta_{ik}(x)\right) & \text{ for } i \in \ens{m+1,\ldots,n},
\end{dcases}
$$
(\eqref{zetaik} is still valid for the indices considered) where $L=(\ell_{ij})_{1 \leq i,j \leq p}$ and taking
$$\tilde{g}_{ij}(x)=\sum_{k=m+1}^i \ell_{i-m,k-m} g_{kj}(\zeta_{ik}(x)),$$
for $i \in \ens{m+1,\ldots,n}$ and $j \in \ens{1,\ldots,m}$, where $\widetilde{G}$ denotes the matrix
$$
\widetilde{G}=
\begin{pmatrix}
G_{--} \\
\Theta_{+-}(G_{+-})
\end{pmatrix}.
$$

\end{itemize}

\end{proof}

\section{Smallest value of the minimal null control time}\label{sect infT}

Thanks to the result of previous section it is from now on sufficient to consider boundary coupling matrices which are in canonical form.
This is a big step forward, which already allows us to characterize the smallest value of the minimal null control time.

\subsection{Characterization for systems $(\Lambda,-,Q,G)$}

We start with systems of the form $(\Lambda,-,Q,G)$, we will discuss in the next section how to deduce the corresponding result for the initial system $(\Lambda,M,Q,-)$.

\begin{theorem}\label{thm NC}
Let $Q^0 \in \R^{p \times m}$ be in canonical form, $G \in L^{\infty}(0,1)^{n \times m}$ and $T>0$ be fixed.
\begin{enumerate}[(i)]
\item\label{thm NC i1}
If the system $(\Lambda,-,Q^0,G)$ is null controllable in time $T$, then necessarily
\begin{equation}\label{cond T infT}
T \geq \max\ens{
\max_{k \in \ens{1,\ldots,\rho}}
T_{m+r_k}(\Lambda)+T_{c_k}(\Lambda), \quad
T_{m+1}(\Lambda)
, \quad
T_m(\Lambda)
}.
\end{equation}

\item\label{thm NC i2}
If $T$ satisfies the condition \eqref{cond T infT}, then the system $(\Lambda,-,Q^0,-)$ (i.e. with $G=0$) is null controllable in time $T$ with control $u=0$.
\end{enumerate}
\end{theorem}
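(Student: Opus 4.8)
The plan is to work directly with the solution along the characteristics \eqref{sol char def 1}, exploiting the very simple coupling structure of a system in canonical form: with $M = 0$ and $G = 0$, the only coupling is the boundary relation $y_+(t,0) = Q^0 y_-(t,0)$, which by Definition \ref{def canon} means that component $y_{m+r_k}$ enters at $x=0$ carrying exactly the trace $y_{c_k}(\cdot,0)$, while all other positive components enter with zero data. For the necessity part \eqref{thm NC i1}, I would argue by contradiction with the time. The bound $T \geq T_m(\Lambda)$ is the classical one: there is not enough time for the slowest incoming ($-$)-characteristic to cross the domain, so one can build a nonzero initial datum supported appropriately in the $m$-th component whose state at time $T$ cannot be driven to zero regardless of $u$ (the control only acts at $x=1$ on the $-$-components, and the information has to travel the whole segment). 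The bound $T \geq T_{m+1}(\Lambda)$ is similar but concerns the fastest ($+$)-characteristic: a $+$-component entering at $x=0$ with zero boundary data (which happens for at least one index since $p \geq 1$ and at most $m$ of the $p$ rows are nonzero in the canonical form when $\rho < p$, but even when $\rho=p$ one must still flush out the initial data) still needs time $T_{m+1}(\Lambda)$ to exit, and during $[0,T)$ with $T < T_{m+1}(\Lambda)$ one exhibits an initial datum in that component that survives. The bound $T \geq T_{m+r_k}(\Lambda) + T_{c_k}(\Lambda)$ is the essential new one: choosing an initial datum supported near $x=0$ in component $c_k$, its trace $y_{c_k}(\cdot,0)$ is felt over a time interval of length $T_{c_k}(\Lambda)$; this trace is injected via the boundary into component $m+r_k$, which then needs an additional $T_{m+r_k}(\Lambda)$ to leave the domain. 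One writes out \eqref{sol char def 1} for these two components, uses the inverse formulas \eqref{inv sin} and \eqref{explicit formula ssin}, and checks that if $T < T_{m+r_k}(\Lambda)+T_{c_k}(\Lambda)$ the value $y_{m+r_k}(T,\cdot)$ near $x=1$ is a nonzero (indeed injective) function of the initial datum that no choice of $u$ can cancel, because $u$ does not appear in the equation for $y_{m+r_k}$ at all (it is a $+$-component) nor, for $T$ that small, in the equation for $y_{c_k}$ near $x=0$.

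For the sufficiency part \eqref{thm NC i2}, I would take $u = 0$ and simply trace the characteristics forward, showing that after time $T$ satisfying \eqref{cond T infT} every component vanishes identically. With $M=0$, $G=0$ and $u=0$, the solution formula \eqref{sol char def 1} collapses to pure transport: $y_i(t,x) = b_i(\cdot)(t,x)$. For a $-$-component $i \leq m$, once $t > T_i(\Lambda) = \ssout_i(0,1)$ we have $\ssin_i(t,x) > 0$ for all $x$, hence $y_i(t,x) = u_i(\ssin_i(t,x)) = 0$; since $T \geq T_m(\Lambda) \geq T_i(\Lambda)$, all $-$-components are zero at time $T$. For a $+$-component $m+j$, if row $j$ of $Q^0$ is zero (i.e. $j \notin \{r_1,\dots,r_\rho\}$), then $b_{m+j}(t,x) = 0$ once $\ssin_{m+j}(t,x) > 0$, i.e. once $t > T_{m+j}(\Lambda)$, and $T \geq T_{m+1}(\Lambda) \geq T_{m+j}(\Lambda)$ handles this. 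If $j = r_k$, then $b_{m+r_k}(t,x) = y_{c_k}(\ssin_{m+r_k}(t,x), 0)$ for $t$ large; but $y_{c_k}(s,0)$ already vanishes for $s > T_{c_k}(\Lambda)$ by the $-$-component analysis above (its boundary trace at $x=0$ is flushed once $\ssin_{c_k}(s,0) > 0$, i.e. $s > T_{c_k}(\Lambda)$), so $y_{m+r_k}(t,x) = 0$ as soon as $\ssin_{m+r_k}(t,x) > T_{c_k}(\Lambda)$, which by \eqref{explicit formula ssin} happens once $t > T_{m+r_k}(\Lambda) + T_{c_k}(\Lambda)$. Since $T$ satisfies \eqref{cond T infT} this holds for all such $k$, so $y(T,\cdot) = 0$.

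The routine obstacles are bookkeeping ones: being careful that the entry/exit times depend on $x$ and taking suprema over $x \in (0,1)$, and making the "injective dependence on the initial datum" in the necessity argument rigorous (a clean way is to pick $y^0$ supported in a tiny interval near the relevant boundary point in the single relevant component and test against it, or to invoke the observability-type duality). The genuinely delicate point I expect to be the crux is the necessity of the coupled bound $T \geq T_{m+r_k}(\Lambda)+T_{c_k}(\Lambda)$: one must verify that the trace $y_{c_k}(\cdot,0)$ is not, for small $T$, already influenced by the control $u$ in a way that could conspire to cancel the effect in component $m+r_k$ — this is where the canonical form is used crucially, since the fact that $c_k$ are \emph{distinct} columns and that we are looking at the component $y_{c_k}$ (not some linear combination) is what keeps the map from $y^0$ to $y_{m+r_k}(T,\cdot)$ from degenerating. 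This is precisely the argument used in \cite{HO20-pre}, which the paper says it follows; I would reproduce that argument here, adapted to indices $(r_k,c_k)$ coming from the canonical form rather than the full-rank setting.
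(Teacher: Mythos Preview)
Your overall strategy matches the paper's: use the characteristic formula \eqref{sol char def 1} directly, first establish $T\geq\max\{T_{m+1}(\Lambda),T_m(\Lambda)\}$, then (assuming this already holds) derive the coupled bound $T\geq T_{m+r_{k_0}}(\Lambda)+T_{c_{k_0}}(\Lambda)$ by tracing $y_{m+r_{k_0}}(T,\cdot)$ back through the boundary relation to $y^0_{c_{k_0}}$; part \eqref{thm NC i2} is exactly your forward-characteristics computation.

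There is, however, a genuine gap in your necessity argument: you repeatedly reason as if $G=0$ when treating item \eqref{thm NC i1}. For instance, you write that ``$u$ does not appear in the equation for $y_{m+r_k}$ at all'' and that the control does not reach ``the equation for $y_{c_k}$ near $x=0$''. With $G\neq 0$ this is false: the integral term $\int g_{ij}(\chi_i(s;T,x))\,y_j(s,0)\,ds$ is present in \emph{every} component's formula, and $y_j(s,0)$ depends on $u_j$ as soon as $s>T_j(\Lambda)$, which certainly occurs in the regime $T\geq T_m(\Lambda)$ you are working in for step two. So you must explain why the control, acting through these integral terms, cannot be chosen to cancel the contribution of $y^0_i$ (respectively $y^0_{c_{k_0}}$). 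Your suggestions---choosing $y^0$ supported in a tiny interval, or appealing to duality---do not by themselves rule out such cancellation. The paper closes this gap with a clean functional-analytic observation: on the relevant set $\omega_i$ (or $\tilde\omega$), the map sending $(y_j(\cdot,0))_{j}\in L^2(0,T)^m$ to the integral term is a bounded operator whose range lies in $L^\infty$, hence is a \emph{proper} subspace of $L^2$; but the map $y^0_i\mapsto y^0_i(\chi_i(0;T,\cdot))$ is a diffeomorphic change of variable and hence surjective onto $L^2$. Therefore no choice of control can produce the required equality for arbitrary $L^2$ initial data. You should insert this range-versus-surjectivity argument at both places where you currently hand-wave the cancellation issue.
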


As for Theorem \ref{main thm}, we use the convention that the undefined quantities are simply not taken into account, which means that the condition \eqref{cond T infT} is reduced to $T \geq \max\ens{T_{m+1}(\Lambda), \quad T_m(\Lambda)}$ when $\rho=0$ (i.e. when $Q^0=0$).

This result shows in particular that the smallest value that $\Tinf(\Lambda,-,Q^0,G)$ can take with respect to $G \in L^{\infty}(0,1)^{n \times m}$ is equal to the quantity on the right-hand side of the inequality in \eqref{cond T infT}.
This can be extended to arbitrary boundary coupling matrices thanks to Proposition \ref{prop reduc Q}.

\begin{proof}[Proof of Theorem \ref{thm NC}]
We use the ideas of the proof of \cite[Lemma 3.3]{HO20-pre}.
\begin{enumerate}[1)]
\item
We first show that it is necessary that
$$T \geq \max\ens{T_{m+1}(\Lambda), \quad T_m(\Lambda)}.$$
We point out that for this first step there is no need to assume that $Q^0$ is in canonical form.
Assume that $T<\max\ens{T_{m+1}(\Lambda), \quad T_m(\Lambda)}$.
Then, there exists $i \in \ens{1,\ldots,n}$ such that $T<T_i(\Lambda)$.
Let $\omega_i$ be the open subset defined by
\begin{equation}\label{def omegai}
\omega_i=\ens{x \in (0,1) \quad \middle| \quad \ssin_i(T,x)<0}.
\end{equation}
Then, we have (see \eqref{def Ti gen}, \eqref{inv sin} and \eqref{ssin monotonicity})
$$T<T_i(\Lambda) \quad \Longleftrightarrow \quad \omega_i \neq \emptyset.$$
For $x \in \omega_i$, the null controllability condition $y_i(T,x)=0$ is equivalent to (see \eqref{sol char def 1})
$$
0=
y^0_i\left(\chi_i(0;T,x)\right)
+\int_0^T \sum_{j=1}^m g_{ij}\left(\chi_i(s;T,x)\right) y_j(s,0) \, ds.
$$
Since $y^0_i \in L^2(0,1)$ is arbitrary and $x \in \omega_i \mapsto \chi_i(0;T,x)$ is a $C^1$-diffeomorphism (its inverse is given by $\xi \longmapsto \chi_i(T;0,\xi)$ thanks to \eqref{chi invariance}), this shows that the bounded linear operator $K:L^2(0,T)^m \longrightarrow L^2(\omega_i)$ defined by
$$
(Kh)(x)
=-\int_0^T \sum_{j=1}^m g_{ij}\left(\chi_i(s;T,x)\right) h_j(s) \, ds,
$$
is surjective.
This is impossible since its range is clearly a subset of $L^{\infty}(\omega_i)$, which is a proper subset of $L^2(\omega_i)$.

\item
Suppose now that $\rho \neq 0$ (otherwise we are done) and that $T$ is such that
$$
\max\ens{T_{m+1}(\Lambda), \quad T_m(\Lambda)} \leq T<T_{m+r_{k_0}}(\Lambda)+T_{c_{k_0}}(\Lambda),
$$
where $k_0 \in \ens{1,\ldots,\rho}$ is any index such that
$$T_{m+r_{k_0}}(\Lambda)+T_{c_{k_0}}(\Lambda)=\max_{k \in \ens{1,\ldots,\rho}}
T_{m+r_k}(\Lambda)+T_{c_k}(\Lambda).$$
We have seen in the previous step that the condition $T \geq \max\ens{T_{m+1}(\Lambda), \quad T_m(\Lambda)}$ means that all the subsets $\omega_i$ defined in \eqref{def omegai} are empty.
In particular (recall also \eqref{ssin monotonicity}),
$$\ssin_{m+r_{k_0}}(T,x)>0, \quad \forall x \in (0,1).$$
Therefore, the null controllability condition $y_{m+r_{k_0}}(T,x)=0$ is equivalent to (see \eqref{sol char def 1} and recall that $Q^0$ is in canonical form)
\begin{equation}\label{NC cond first step}
0=y_{c_{k_0}}(\ssin_{m+r_{k_0}}(T,x), 0)
+\int_{\ssin_{m+r_{k_0}}(T,x)}^T \sum_{j=1}^m g_{m+r_{k_0},j}\left(\chi_{m+r_{k_0}}(s;T,x)\right) y_j(s,0) \, ds.
\end{equation}
Let us now introduce the open subset $\tilde{\omega}$ defined by
$$
\tilde{\omega}
=\ens{x \in (0,1) \quad \middle| \quad \ssin_{c_{k_0}}(\ssin_{m+r_{k_0}}(T,x), 0)<0}.
$$
Using that $T_{m+r_{k_0}}(\Lambda)+T_{c_{k_0}}(\Lambda)=\ssout_{m+r_{k_0}}(\ssout_{c_{k_0}}(0,1), 0)$ (see \eqref{def Ti gen} and \eqref{explicit formula ssin}), we can show by a similar reasoning as in the first step that
$$
T<T_{m+r_{k_0}}(\Lambda)+T_{c_{k_0}}(\Lambda)
\quad \Longleftrightarrow \quad
\tilde{\omega} \neq \emptyset.
$$
For $x \in \tilde{\omega}$, the identity \eqref{NC cond first step} becomes (see \eqref{sol char def 1})
\begin{align*}
\MoveEqLeft[8]
0=y^0_{c_{k_0}}(\chi_{c_{k_0}}(0;\ssin_{m+r_{k_0}}(T,x), 0))
\\
&+\sum_{j=1}^m \int_{\ssinb_{c_{k_0}}(\ssin_{m+r_{k_0}}(T,x),0)}^{\ssin_{m+r_{k_0}}(T,x)} g_{c_{k_0} j}\left(\chi_{c_{k_0}}(s; \ssin_{m+r_{k_0}}(T,x), 0)\right)y_j\left(s,0\right) \, ds
\\
&+\int_{\ssin_{m+r_{k_0}}(T,x)}^T \sum_{j=1}^m g_{m+r_{k_0},j}\left(\chi_{m+r_{k_0}}(s;T,x)\right) y_j(s,0) \, ds.
\end{align*}
This leads to a contradiction by using the same argument as at the end of the first step.

\item
Finally, it is not difficult to see from \eqref{sol char def 1} that, when $G=0$, the control $u=0$ brings the solution of the system $(\Lambda,-,Q^0)$ to zero in any time $T$ satisfying \eqref{cond T infT}.

\end{enumerate}

\end{proof}

\subsection{Proof of the first part of Theorem \ref{main thm}}\label{sect proof first part}

Let us now show how the previous results yield the desired characterization of the smallest minimal null control time for the initial system $(\Lambda,M,Q,-)$.

\begin{proof}[Proof of item \ref{item infT} of Theorem \ref{main thm}]
\begin{itemize}
\item
Let $M \in L^{\infty}(0,1)^{n \times n}$ and $Q \in \R^{p \times m}$ be fixed.
Let $T>0$ be such that the system $(\Lambda,M,Q,-)$ is null controllable in time $T$.

\begin{itemize}
\item
By Proposition \ref{prop remove diag} and Theorem \ref{thm backstepping}, there exists $G \in L^{\infty}(0,1)^{n \times m}$ such that the system $(\Lambda,-,Q,G)$ is null controllable in time $T$.

\item
From Proposition \ref{prop reduc Q}, there exists $\widetilde{G} \in L^{\infty}(0,1)^{n \times m}$ such that  the system $(\Lambda,-,Q^0,\widetilde{G})$ is null controllable in time $T$, where $Q^0$ is the canonical form of $Q$.

\item
By item \ref{thm NC i1} of Theorem \ref{thm NC} we obtain that $T$ has to satisfy the condition \eqref{cond T infT}.
\end{itemize}

This establishes the following lower bound:
$$
\Tinf(\Lambda,M,Q) \geq
\max\ens{
\max_{k \in \ens{1,\ldots,\rho}}
T_{m+r_k}(\Lambda)+T_{c_k}(\Lambda), \quad
T_{m+1}(\Lambda)
, \quad
T_m(\Lambda)
},
$$
valid for every $M \in L^{\infty}(0,1)^{n \times n}$.

\item
This lower bound is reached for $M=0$, this follows from Theorem \ref{thm NC} and Proposition \ref{prop reduc Q} (using that $\Theta(0)=0$).
\end{itemize}

\end{proof}

\subsection{Comments on the case $M=0$}

Let us conclude this section with some interesting remarks on the case $M=0$.
For $M=0$, we can combine Theorem \ref{thm NC} with Proposition \ref{prop reduc Q} (with $G=0$, in which case their proofs are greatly simplified) to obtain a completely different proof of \cite[Theorems 1 and 2]{Wec82}.
Our proof has several advantages.
Firstly, we directly obtain a more explicit expression of the minimal null control time (see e.g. \cite[Remark 1.15]{HO21}).
On the other hand, we do not need to use the so-called duality and we are able to obtain an explicit control.
More precisely, we can extract the following result from item \ref{thm NC i2} of Theorem \ref{thm NC} and the proof of Proposition \ref{prop reduc Q}:


\begin{proposition}\label{prop explicit cont when M=0}

Let $Q \in \R^{p \times m}$ and $T$ satisfy \eqref{cond T infT}.
Then, the system $(\Lambda,-,Q,-)$ is finite-time stabilizable with settling time $T$, with the following explicit feedback law:
\begin{equation}\label{explicit feedback}
u_i(t)
=-\sum_{k=i+1}^m u^{ik} y_k(t,\zeta_{ik}(1)), \quad i \in \ens{1,\ldots,m},
\end{equation}
where $U^{-1}=(u^{ik})_{1 \leq i,k \leq m}$ and $U$ is any matrix $U$ coming from the $LCU$ decomposition of $Q$.

\end{proposition}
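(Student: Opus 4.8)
The plan is to combine the explicit control of item \ref{thm NC i2} of Theorem \ref{thm NC} with the transformation constructed in the proof of Proposition \ref{prop reduc Q}, then simply unwind it. First I would recall that, by item \ref{thm NC i2} of Theorem \ref{thm NC}, the system $(\Lambda,-,Q^0,-)$ with $Q^0$ the canonical form of $Q$ is null controllable in time $T$ with control $u=0$; in fact, inspecting \eqref{sol char def 1} with $G=0$ and $u=0$, one sees that the corresponding solution actually satisfies $\tilde{y}(t,\cdot)=0$ for all $t \geq T$ and all initial data, i.e. $u=0$ is a finite-time stabilizing feedback with settling time $T$ for $(\Lambda,-,Q^0,-)$.

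Next I would use the first transformation in the proof of Proposition \ref{prop reduc Q}, namely the spatial transformation \eqref{transfo Q to QU} (with $G=G_{--}=0$, in which case $\Theta_{--}(0)=0$ and $G_{+-}U=0$), which establishes $(\Lambda,-,Q,-) \equi (\Lambda,-,QU,-)$ for any invertible upper triangular $U$ coming from the $LCU$ decomposition of $Q$. Since $Q^0=LQU$ differs from $QU$ only by the left multiplication by the invertible lower triangular matrix $L$, and left multiplication acts only on the boundary condition $y_+(t,0)=Qy_-(t,0)$ without affecting the $y_-$ components or the stabilization of the full state (or, more directly, applying the second transformation of Proposition \ref{prop reduc Q} with $G=0$), we get a transformation $L$ between the solution spaces of $(\Lambda,-,Q,-)$ and $(\Lambda,-,Q^0,-)$. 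Here the key point is to keep track of how the control transforms: from \eqref{transfo Q to QU} the control of the transformed system is $\tilde{u}_i(t)=\sum_{k=i}^m u^{ik} y_k(t,\zeta_{ik}(1))$, so the condition $\tilde{u}=0$ for the canonical system reads, for every $i \in \ens{1,\ldots,m}$,
$$
0=\sum_{k=i}^m u^{ik} y_k(t,\zeta_{ik}(1)),
$$
and since $\zeta_{ii}(1)=1$ and $u^{ii}=1$ (because $U$, hence $U^{-1}$, is upper triangular with ones on the diagonal, recall Proposition \ref{Gauss elim}), this is equivalent to \eqref{explicit feedback}.

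Thus the argument is: the original system $(\Lambda,-,Q,-)$ in closed loop with the feedback \eqref{explicit feedback} is, through the explicit invertible transformation $\tilde{y}=Ly$ built above, conjugate to the canonical system $(\Lambda,-,Q^0,-)$ in closed loop with the trivial feedback $u=0$; the latter drives every state to zero in time $T$ by Theorem \ref{thm NC}, and since $L$ is bounded and invertible on $L^2(0,1)^n$ and maps solutions to solutions (Definition \ref{def syst equiv}), the former does too, which is exactly finite-time stabilizability with settling time $T$. I do not expect a serious obstacle here: the only points requiring a little care are (i) checking that the transformed control under \eqref{transfo Q to QU} is indeed the stated linear combination, so that $\tilde{u}=0$ unwinds to \eqref{explicit feedback}, and (ii) noting that the left factor $L$ of the $LCU$ decomposition does not modify the $y_-$ part and hence does not modify the feedback law — only the matrices $U$ and $U^{-1}$ being upper triangular with unit diagonal enter the formula. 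Both are immediate from the constructions in Section \ref{sect boundary coupling matrix}.
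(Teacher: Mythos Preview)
Your proposal is correct and follows exactly the approach the paper indicates: it says the result ``can be extracted from item \ref{thm NC i2} of Theorem \ref{thm NC} and the proof of Proposition \ref{prop reduc Q}'' without spelling out the details, and what you wrote is precisely that extraction. Your observations that $u^{ii}=1$, $\zeta_{ii}(1)=1$, and that the $L$-part of the transformation leaves $y_-$ (hence the control) untouched are the right small checks needed to land on the feedback formula \eqref{explicit feedback}.
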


We recall that the previous statement simply means that, if we replace the $i$-th component of $u$ by the right-hand side of the formula \eqref{explicit feedback} in the system \eqref{syst} (with $M=0$), then the corresponding solution satisfies $y(T,\cdot)=0$ for every $y^0 \in L^2(0,1)^n$.
We also recall that systems with such boundary conditions are well posed (see e.g. \cite[Section 3]{CN19} in the $L^{\infty}$ setting).

A similar result was obtained in the proof of \cite[Proposition 1.6]{CN19} when $Q \in \setCN$ (defined in \eqref{def setCN}), our result generalizes it to arbitrary $Q \in \R^{p \times m}$.
Let us illustrate with an example that the feedback law that we have obtained \eqref{explicit feedback} is also the same as in this reference when $Q \in \setCN$.

\begin{example}
Let us consider the $6 \times 6$ system used as example in \cite[p. 1155]{CN19}: we take $p=m=3$, the negative speeds are
$$\lambda_1=-4< \lambda_2=-2<\lambda_3=-1<0,$$
the positive speeds are arbitrary (subject to \eqref{hyp speeds}), and we take the boundary coupling matrix
$$
Q=\begin{pmatrix}
1 & -1 & -1 \\
1 & 0 & 2 \\
a & b & c
\end{pmatrix},
$$
where $a,b,c \in \R$ are arbitrary numbers.

Then, the Gaussian elimination easily shows that $Q \in \setCN$ and
$$
U=\begin{pmatrix}
1 & 1 & -2 \\
0 & 1 & -3 \\
0 & 0 & 1
\end{pmatrix},
\quad
U^{-1}=\begin{pmatrix}
1 & -1 & -1 \\
0 & 1 & 3 \\
0 & 0 & 1
\end{pmatrix}
.
$$
The feedback law is thus given by
$$
\begin{dcases}
y_1(t,1)=y_2\left(t,\frac{1}{2}\right)+y_3\left(t,\frac{1}{4}\right), \\
y_2(t,1)=-3y_3\left(t,\frac{1}{2}\right), \\
y_3(t,1)=0.
\end{dcases}
$$

\end{example}

\begin{remark}
Let us also add that another advantage of not using the duality is that it can be useful to deal with other functional settings (e.g. $C^1$, provided that the inequality in \eqref{cond T infT} is strict).
\end{remark}

\section{Reduction to a canonical system}\label{sect reduc to casc}

We are now left with the proof of the second part of Theorem \ref{main thm}, which is more difficult and require more work.

In this section, we will show how to use the canonical structure of the boundary coupling matrix to remove some coupling terms in the matrix $G_{+-}$.
For any $Q \in \R^{p \times m}$, we introduce the set
$$\mathcal{C}(Q)=\ens{G_{+-} \in L^{\infty}(0,1)^{p \times m} \st g_{m+i, c_k}=0, \quad \forall k \in \ens{1,\ldots,\rho}, \, \forall i \geq r_k}$$
($\mathcal{C}(0)=L^{\infty}(0,1)^{p \times m}$).

The goal of this section is to prove the following result:

\begin{proposition}\label{prop Gtilde}
Assume that $Q^0 \in \R^{p \times m}$ is in canonical form.
Then, there exists a map $\Upsilon:L^{\infty}(0,1)^{p \times m} \longrightarrow \mathcal{C}(Q^0)$ such that, for every $G \in L^{\infty}(0,1)^{n \times m}$, we have
\begin{equation}\label{equiv syst reduc Gpm}
(\Lambda,-,Q^0,G)
\equi
\left(\Lambda,-,Q^0,\begin{pmatrix} G_{--} \\ \Upsilon(G_{+-}) \end{pmatrix} \right).
\end{equation}

\end{proposition}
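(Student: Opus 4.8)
The plan is to construct, for each index $k \in \{1,\ldots,\rho\}$, a spatial transformation that kills the entries $g_{m+i,c_k}$ for $i \geq r_k$ while only affecting the already-handled columns $c_1,\ldots,c_{k-1}$ (in a way that preserves the zeros obtained so far) and while leaving the boundary coupling matrix $Q^0$ untouched. Since $Q^0$ is in canonical form, its row $r_k$ has its only nonzero entry in column $c_k$ (value $1$), so $y_{m+r_k}(t,0) = y_{c_k}(t,0)$. This suggests using a transformation that adds to the $(m+i)$-th component ($i \geq r_k$) a term built from $y_{m+r_k}$ transported along the appropriate characteristic; because the boundary condition ties $y_{m+r_k}(t,0)$ to $y_{c_k}(t,0)$, this does not introduce new boundary coupling but it does allow us to cancel the source term $g_{m+i,c_k} y_{c_k}(t,0)$.

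Concretely, I would look for a transformation of the form
$$
\tilde y_i(t,x) =
\begin{dcases}
y_i(t,x) & \text{if } i \in \{1,\ldots,m\} \text{ or } i = m+r_k \text{ for some } k,
\\
y_i(t,x) - \sum_{k \st r_k < i} \beta_{ik}(x)\, y_{m+r_k}\!\left(t,\zeta_{i,m+r_k}(x)\right) & \text{otherwise},
\end{dcases}
$$
where $\zeta_{i,m+r_k}$ is the characteristic-matching map from the proof of Proposition~\ref{prop reduc Q} (well-defined and a $C^1$-diffeomorphism onto a subset of $(0,1)$ precisely because $\lambda_{m+r_k} \leq \lambda_i$ when $r_k \leq i$, by \eqref{hyp speeds} and \eqref{zetaik}). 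Here the rows $m+r_k$ are left untouched so that the boundary condition $y_+(t,0) = Q^0 y_-(t,0)$ is preserved (the components $\tilde y_{m+r_k}$ that appear on the right-hand side at $x=0$ are unchanged, and a component $\tilde y_i$ with $i$ not of the form $m+r_k$ has $q^0_{i,\cdot} = 0$, so at $x=0$ one gets $\tilde y_i(t,0) = -\sum \beta_{ik}(0) y_{m+r_k}(t,0)$, which must be matched by the modified source term — this is consistent provided we absorb the boundary values correctly). Plugging this into the equation and proceeding exactly as in the proof of Proposition~\ref{prop reduc Q} — differentiating, using $\partial_x \zeta_{i,m+r_k} = \lambda_{m+r_k}(\zeta_{i,m+r_k})/\lambda_i$ to annihilate the transport terms, and collecting the coefficients of $y_{\ell}(t,0)$ — yields an explicit algebraic formula for the new coupling coefficients $\tilde g_{i\ell}$ and shows that one can choose $\beta_{ik}$ so that $\tilde g_{m+i,c_k} = 0$ for all $i \geq r_k$. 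The transformation is invertible (it is block-triangular with identity diagonal blocks, hence a Volterra-type / finite-rank perturbation that is boundedly invertible on $L^2(0,1)^n$), and the converse direction (solutions of the new system map back to solutions of the old one) follows by the same computation, as in all the previous sections.

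The main obstacle I anticipate is bookkeeping: one must carry out the cancellations for $k=1,\ldots,\rho$ in a consistent order and verify that cancelling the entries in column $c_k$ does not destroy the zeros already created in columns $c_1,\ldots,c_{k-1}$. Because the rows $r_k$ are strictly increasing and the transformation for index $k$ modifies $\tilde y_i$ only for $i > r_k$ using $y_{m+r_k}$, whose source row $m+r_k$ has already been normalized, one should check that the induced changes in the coefficients $\tilde g_{m+i,c_j}$ for $j<k$ involve only coefficients $g_{m+r_k, c_j}$ which were themselves set to zero at an earlier stage (since $r_k > r_j$ means $g_{m+r_k,c_j}=0$ already holds after step $j$). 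This ordering argument — processing rows from top to bottom — is what makes the successive eliminations compatible, and it mirrors the downward-substitution structure of the $LCU$ decomposition. A secondary technical point is confirming that the $\beta_{ik}$ produced are genuinely in $L^{\infty}(0,1)$, which follows from the boundedness of $G$, the smoothness of the $\zeta$-maps, and the bound $\lambda_{m+r_k}/\lambda_i$ away from zero and infinity on $[0,1]$ (using \eqref{hyp speeds bis}). Once these points are checked, $\Upsilon$ is the composition of the $\rho$ elementary maps and the equivalence \eqref{equiv syst reduc Gpm} follows from transitivity of $\sim$.
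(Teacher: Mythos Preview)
Your approach has a genuine gap. The pointwise transformation you propose, $\tilde y_i(t,x)=y_i(t,x)-\sum_k \beta_{ik}(x)\,y_{m+r_k}(t,\zeta_{i,m+r_k}(x))$, does \emph{not} stay in the class $(\Lambda,-,Q^0,G)$ when the $\beta_{ik}$ are $x$-dependent. If you carry out the computation you describe, the chain rule does annihilate the $\partial_x y_{m+r_k}$ contribution, but the product rule leaves the extra term
\[
-\lambda_i(x)\,\beta_{ik}'(x)\,y_{m+r_k}\!\left(t,\zeta_{i,m+r_k}(x)\right),
\]
which is an \emph{internal} coupling of the type $M(x)y(t,x)$, not of the type $G(x)y_-(t,0)$. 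In Proposition~\ref{prop reduc Q} this problem does not arise only because the coefficients there (entries of $U^{-1}$ or $L$) are constants. If you force $\beta_{ik}$ to be constant to avoid the issue, you are left with finitely many scalars to enforce the identity $g_{i,c_k}(x)=\sum_{k'}\beta_{ik'}\,g_{m+r_{k'},c_k}(\zeta_{i,m+r_{k'}}(x))$ for a.e.\ $x$, which fails for generic $G_{+-}\in L^\infty(0,1)^{p\times m}$. There is also a boundary problem: for $i$ not of the form $m+r_k$ one has $y_i(t,0)=0$ but $\tilde y_i(t,0)=-\sum_k\beta_{ik}(0)\,y_{c_k}(t,0)$, so the boundary coupling matrix is no longer $Q^0$ unless $\beta_{ik}(0)=0$; this cannot be ``absorbed into the source term'' because the boundary condition and the source $G(x)y_-(t,0)$ live in different places in the system.

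The paper avoids all of this by using an \emph{integral} (Volterra) transformation $\tilde y=y-\int_0^x K(x,\xi)y(\cdot,\xi)\,d\xi$ with $K=\begin{psmallmatrix}0&0\\0&K_{++}\end{psmallmatrix}$. The kernel equations for $K_{++}$ (with $M=0$) are completely decoupled and solvable explicitly; the free boundary data $k_{ij}(x,0)=a_{ij}(x)$ for $i\geq j$ (cf.\ \eqref{additional conditions 2}) give an infinite-dimensional family of functions that can be chosen, via a system of linear Volterra equations of the second kind, so that the $(i,c_k)$-entries of the new $G_{+-}$ vanish for $i\geq m+r_k$. The integral nature of the transformation is precisely what provides the flexibility your finite-rank pointwise ansatz lacks, and it automatically keeps the boundary condition at $x=0$ intact since the integral vanishes there.
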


\begin{proof}
We assume that $\rho \neq 0$ since otherwise there is nothing to prove.
Reproducing the proof of Theorem \ref{thm backstepping} with the kernel
$$
K=\begin{pmatrix}
0 & 0 \\
0 & K_{++}
\end{pmatrix},
$$
we see that we have \eqref{equiv syst reduc Gpm} if we take
$$
(\Upsilon(G_{+-}))(x)=
G_{+-}(x)-K_{++}(x,0)\Lambda_{++}(0)Q^0
-\int_0^x K_{++}(x,\xi)G_{+-}(\xi) \, d\xi,
$$
and provided that $K_{++}$ satisfies
$$
\begin{dcases}
\Lambda_{++}(x)\px{K_{++}}(x,\xi)
+\pxi{K_{++}}(x,\xi)\Lambda_{++}(\xi)
+K_{++}(x,\xi)\pxi{\Lambda_{++}}(\xi)=0,
\\
\Lambda_{++}(x)K_{++}(x,x)-K_{++}(x,x)\Lambda_{++}(x)=0.
\end{dcases}
$$
This is an uncoupled system with many solutions (as we already know from Theorem \ref{thm kern}).
Let us find a particular one that guarantees that $\Upsilon(G_{+-}) \in \mathcal{C}(Q^0)$.
Let $i,j \in \ens{m+1,\ldots,n}$ be fixed.
The equation for $k_{ij}$ is simply
\begin{equation}\label{kern equ pp}
\begin{dcases}
\lambda_i(x) \px{k_{ij}}(x,\xi)
+\pxi{k_{ij}}(x,\xi)\lambda_j(\xi)
+k_{ij}(x,\xi)\pxi{\lambda_j}(\xi)
=0,
\\
k_{ij}(x,x)=0, \quad \text{ if } i \neq j.
\end{dcases}
\end{equation}
Let $s \mapsto \zeta_{ij}(s;x,\xi)$ be the associated characteristic passing through $(x,\xi)$:
$$
\begin{dcases}
\pas{\zeta_{ij}}(s;x,\xi)= \frac{\lambda_j(\zeta_{ij}(s;x,\xi))}{\lambda_i(s)}, \\
\zeta_{ij}(x;x,\xi)=\xi.
\end{dcases}
$$
The solutions to \eqref{kern equ pp} are explicit:
\begin{itemize}
\item
If $i \geq j$, then there exists a unique solution to \eqref{kern equ pp} which satisfies $k_{ij}(x,0)=a_{ij}(x)$ ($a_{ij} \in L^{\infty}(0,1)$ is arbitrary) and it is given by
$$
k_{ij}(x,\xi)=
\begin{dcases}
a_{ij}(\ssin_{ij}(x,\xi)) \frac{\lambda_j(0)}{\lambda_j(\xi)} & \text{ if } \xi<\zeta_{ij}(x;0,0), \\
0 & \text{ if } \xi>\zeta_{ij}(x;0,0),
\end{dcases}
$$
where $\ssin_{ij}(x,\xi) \in (0,x)$ is the unique solution to
$$\zeta_{ij}(\ssin_{ij}(x,\xi);x,\xi)=0.$$

\item
If $i<j$, then there exists a unique solution to \eqref{kern equ pp} which satisfies $k_{ij}(1,\xi)=a_{ij}(\xi)$ ($a_{ij} \in L^{\infty}(0,1)$ is arbitrary) and it is given by
$$
k_{ij}(x,\xi)=
\begin{dcases}
a_{ij}(\zeta_{ij}(1;x,\xi)) \frac{\lambda_j(\zeta_{ij}(1;x,\xi))}{\lambda_j(\xi)} & \text{ if } \xi<\zeta_{ij}(x;1,1), \\
0 & \text{ if } \xi>\zeta_{ij}(x;1,1).
\end{dcases}
$$
\end{itemize}

We choose $a_{ij}=0$ for $i<j$, so that $k_{ij}=0$ for such indices.
Let us now fix the remaining $a_{ij}$ to ensure that $\Upsilon(G_{+-}) \in \mathcal{C}(Q^0)$.
To this end, we fix $i \in \ens{m+1,\ldots,n}$ such that $E_i \neq \emptyset$, where
$$E_i=\ens{\alpha \in \ens{1,\ldots,\rho} \st m+r_{\alpha} \leq i}.$$
The $(i,c_{\alpha})$-th entry of $\Upsilon(G_{+-})$ is equal to zero if, and only if,
$$
0=
g_{i c_{\alpha}}(x)
-\sum_{\ell=m+1}^n k_{i \ell}(x,0) \lambda_{\ell}(0)q^0_{\ell-m, c_{\alpha}}
-\int_0^x \sum_{\ell=m+1}^n k_{i \ell} (x,\xi) g_{\ell, c_{\alpha}}(\xi) \, d\xi.
$$
Using the explicit formulas for $k_{ij}$ and the assumption that $Q^0$ is in canonical form, for $\alpha \in E_i$ this identity is equivalent to
$$
0=
g_{i c_{\alpha}}(x)
-a_{i, m+r_{\alpha}}(x) \lambda_{m+r_{\alpha}}(0)
-\sum_{\ell=m+1}^i \int_0^{\zeta_{i \ell}(x;0,0)}
a_{i\ell}(\ssin_{i \ell}(x,\xi)) \frac{\lambda_{\ell}(0)}{\lambda_{\ell}(\xi)}
 g_{\ell, c_{\alpha}}(\xi) \, d\xi.
$$
Using the change of variable $\theta \mapsto \xi=\zeta_{i \ell}(x;\theta,0)$ with the property
$$\ssin_{i \ell}(x,\zeta_{i \ell}(x;\theta,0))=\ssin_{i \ell}(\theta,0)=\theta,$$
and isolating the terms for $\ell=m+r_\beta$ with $\beta \in E_i$, this gives the following system of Volterra equations of the second kind:
\begin{equation}\label{volt equ for a}
a_{i, m+r_{\alpha}}(x)\lambda_{m+r_{\alpha}}(0)
+\sum_{\beta \in E_i} \int_0^x a_{i, m+\beta}(\theta) h_{i, \alpha, m+\beta}(x,\theta) \, d\theta
=f_{i \alpha}(x), \quad \alpha \in E_i,
\end{equation}
with $L^{\infty}$ kernel
$$
h_{i \alpha \ell}(x,\theta)=
\frac{\lambda_{\ell}(0)}{\lambda_{\ell}(\zeta_{i \ell}(x;\theta,0))}
 g_{\ell c_{\alpha}}(\zeta_{i \ell}(x;\theta,0)) \frac{\partial \zeta_{i \ell}}{\partial \theta}(x;\theta,0),
$$
and $L^{\infty}$ right-hand side
\begin{gather*}
f_{i \alpha}(x)=
g_{i c_{\alpha}}(x)
-\sum_{\ell \in F_i} \int_0^x
a_{i\ell}(\theta) h_{i \alpha \ell}(x,\theta) \, d\theta,
\\
F_i=\ens{\ell \in \ens{m+1,\ldots,i} \st \ell \not\in \ens{m+r_1,\ldots,m+r_{\rho}} \text{ or } \ell \in \ens{m+r_{\beta} \st \beta \not\in E_i}}.
\end{gather*}
Setting
$$a_{i\ell}=0, \quad \forall \ell \in F_i,$$
we have $f_{i\alpha}=g_{i c_{\alpha}}$ and, once $f_{i\alpha}$ is known, the remaining values $a_{i, m+r_{\alpha}}$, $\alpha \in E_i$, are uniquely determined by solving the system \eqref{volt equ for a}.

\end{proof}

\begin{remark}\label{rem canonical form}
Let us point out that it is also possible to transform the matrix $G_{--}$ into a strictly lower triangular matrix by using the kernel
$$
K=\begin{pmatrix}
K_{--} & 0 \\
0 & 0
\end{pmatrix},
$$
and by appropriately choosing some boundary conditions for $K_{--}$ (this is the same proof as above with $Q^0=\Id$).
In summary, whatever $Q \in \R^{p \times m}$ and $G \in L^{\infty}(0,1)^{n \times m}$ are, we have shown that we can always find some transformations so that we are reduced to the case where:
\begin{itemize}
\item
$Q$ is in canonical form.

\item
$G_{--}$ is strictly lower triangular.

\item
$G_{+-} \in \mathcal{C}(Q)$.
\end{itemize}

Let us add that, in general, it is not possible to remove more terms by using some transformations (e.g. the backstepping method).
In other words, there is in general no simpler equivalent system.
An example has been detailed in Appendix \ref{sect counterexample}.
In this sense, systems $(\Lambda,-,Q,G)$ with the above structure could be called ``in canonical form''.
\end{remark}

\section{Reduction by compactness-uniqueness}\label{sect compactness-uniqueness}

In this section, we show that, even though we can not in general fully remove $G_{--}$ by using some transformations (Remark \ref{rem canonical form}), nevertheless, the two systems share the same minimal null control time:

\begin{theorem}\label{thm remove gmm}
Let $Q \in \R^{p \times m}$ be fixed.
For every $G \in L^{\infty}(0,1)^{n \times m}$, we have
\begin{equation}\label{Tinf Gmm}
\Tinf(\Lambda,-,Q,G)=
\Tinf\left(\Lambda,-,Q,
\begin{pmatrix}
0 \\
G_{+-}
\end{pmatrix}
\right).
\end{equation}
\end{theorem}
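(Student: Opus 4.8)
The plan is to argue by duality combined with a compactness--uniqueness scheme, in the spirit of \cite{CN21,DO18}. The two systems in \eqref{Tinf Gmm} differ only through the interior source term $\binom{G_{--}}{0}y_-(t,0)$; in the integral equations \eqref{sol char def 1} this corresponds to the operator $y\mapsto\bigl(x\mapsto\binom{G_{--}(x)}{0}y_-(\cdot,0)\bigr)$, and the induced correction of the solution depends on the trace $y_-(\cdot,0)$ only through integral operators with $L^{\infty}$ kernels, hence ``compactly''. Since this perturbation plays a symmetric role in the two systems, it suffices to show that, for all $A,B\in L^{\infty}(0,1)^{m\times m}$ and all $T>0$, null controllability in time $T$ of $\bigl(\Lambda,-,Q,\binom{B}{G_{+-}}\bigr)$ implies that of $\bigl(\Lambda,-,Q,\binom{A}{G_{+-}}\bigr)$: applying this with $(A,B)=(G_{--},0)$ and with $(A,B)=(0,G_{--})$ shows that the two systems in \eqref{Tinf Gmm} are null controllable in exactly the same times, whence the claimed equality of the minimal null control times.

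By the usual duality between null controllability and observability, this implication reduces to an implication between observability inequalities for the backward adjoint systems,
$$
\norm{\psi(0,\cdot)}_{L^2(0,1)^n}^2 \le C\int_0^T \norm{\psi_-(t,1)}^2\,dt,
$$
where the observation $\psi_-(t,1)$ is the trace at $x=1$ of the first $m$ components of the adjoint state, dual to the control $u=y_-(\cdot,1)$. After integration by parts, the two adjoint systems again differ only by a lower-order nonlocal term, now located in the boundary condition at $x=0$ and carried by $\int_0^1 G(\xi)^{\top}\psi(t,\xi)\,d\xi$. Starting from the inequality for the adjoint of $\bigl(\Lambda,-,Q,\binom{B}{G_{+-}}\bigr)$, I would first prove a \emph{relaxed} inequality for the adjoint of $\bigl(\Lambda,-,Q,\binom{A}{G_{+-}}\bigr)$ of the form
$$
\norm{\psi(0,\cdot)}_{L^2(0,1)^n}^2 \le C\left(\int_0^T \norm{\psi_-(t,1)}^2\,dt + R(\psi)\right),
$$
where $R(\psi)$ is a lower-order (weaker) norm of the adjoint state: this follows by splitting $\psi=\psi_0+r$, with $\psi_0$ the solution of the first adjoint system sharing the same final data, applying the assumed inequality to $\psi_0$, and estimating the correction $r$ by a Duhamel-type argument, the mismatch between the two dynamics being controlled by $R(\psi)$.

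The remainder $R(\psi)$ is then absorbed by a compactness--uniqueness argument: if the clean inequality failed for the second system at time $T$, one would obtain adjoint solutions $\psi_n$ with $\norm{\psi_n(0,\cdot)}_{L^2}=1$, $\int_0^T\norm{\psi_{n,-}(t,1)}^2\,dt\to0$ and, by the relaxed inequality, a uniform lower bound on $R(\psi_n)$; the compact embedding underlying $R$ then yields a subsequence whose final data converge to a nonzero limit, the corresponding adjoint solution $\psi$ satisfying $\psi_-(t,1)=0$ on $(0,T)$. The crux of the proof, and the step I expect to be the main obstacle, is then the \textbf{unique continuation property}: any such $\psi$ must vanish identically. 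I would establish it by propagating the vanishing of $\psi_-$ from $x=1$ along the characteristics of the $\psi_-$-equations, then transferring the information to $\psi_+$ through the coupling at $x=0$ and back, the delicate point being exactly the nonlocal term $\int_0^1 G(\xi)^{\top}\psi(t,\xi)\,d\xi$ in that boundary condition; equivalently, one shows that the space of final data producing a zero observation on $(0,T)$ is finite-dimensional and invariant under the adjoint generator, hence contains an eigenvector, and then rules out eigenvectors with vanishing observation by an explicit analysis along the characteristics — this last step using that $T$ is large enough (recall the lower bound of Theorem~\ref{thm NC}), which is guaranteed by the null controllability hypothesis on $\bigl(\Lambda,-,Q,\binom{B}{G_{+-}}\bigr)$.
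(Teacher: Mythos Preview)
Your approach is essentially the one the paper takes: duality, a relaxed observability inequality obtained by comparing the adjoint to that of the ``good'' system (the paper writes $\theta=z-\zeta$ and observes $\theta_+=0$, exactly your splitting $\psi=\psi_0+r$), compactness of the resulting integral remainder, and a Fattorini--Hautus/eigenvector argument to conclude. Two points deserve correction, however.

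First, you have the difficulty backwards. Ruling out eigenvectors with vanishing observation is \emph{trivial} here and does not use that $T$ is large: if $A^*\phi=\lambda\phi$ and $B^*\phi=0$, then $\phi\in H^1(0,1)^n$ solves a first-order linear ODE with $\phi(1)=0$ (since $\phi_+(1)=0$ from $\dom{A^*}$ and $\phi_-(1)=0$ from $B^*\phi=0$), hence $\phi=0$ by Cauchy--Lipschitz. No characteristics, no time threshold. The genuine subtlety is elsewhere: adapting compactness--uniqueness to \emph{null} controllability. For exact controllability one normalizes the final data $z^1_n$, extracts a weak limit, and is done; here only $\norm{S(T)^*z^1_n}_H$ is normalized, the $z^1_n$ themselves need not be bounded, and your sentence ``a subsequence whose final data converge to a nonzero limit'' is not what happens. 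What converges is $S(T)^*z^1_n$, and to propagate this and obtain a nonzero element in a space that is both finite-dimensional \emph{and} invariant under $A^*$ one needs an additional backward-in-time estimate of the form
\[
\norm{z(t_2)}_H^2\le C\Bigl(\norm{z(t_1)}_H^2+\int_{t_1}^{t_2}\norm{B^*z(t)}_U^2\,dt\Bigr),\qquad 0<t_1<t_2<T-T_0,
\]
which holds for the adjoint of $(\Lambda,-,Q,G)$ by the method of characteristics (using $T_0\ge T_{m+1}(\Lambda)$). This is the ingredient from \cite{DO18,CN21} that makes the scheme work for null controllability, and it is the step you should spell out rather than the eigenvector analysis.
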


\begin{remark}\label{rem non equivalent}
Let us emphasize once again that it is impossible to prove Theorem \ref{thm remove gmm} by using some transformations to pass from one system to the other (e.g. backstepping).
In other words, these two systems are in general not equivalent (in the sense of Definition \ref{def syst equiv}).
Therefore, a different method is necessary to prove Theorem \ref{thm remove gmm}.
We will do it thanks to a compactness-uniqueness method adapted to the null controllability property.
\end{remark}

\subsection{A compactness-uniqueness method for the null controllability}\label{sect abstract}

We will present here a general compactness-uniqueness method adapted to the null controllability property.
We will see in the next section how to use it in order to obtain Theorem \ref{thm remove gmm}.

First of all, let us briefly recall some basic facts about abstract linear control systems.
All along this section, $H$ and $U$ are two complex Hilbert spaces, $A:\dom{A} \subset H \longrightarrow H$ is the generator of a $C_0$-semigroup $(S(t))_{t \geq 0}$ on $H$ and $B \in \lin{U,\dom{A^*}'}$.
Here and in what follows, $E'$ denotes the anti-dual of the complex space $E$, that is the complex (Banach) space of all continuous conjugate linear forms.
We will use the convention that an inner product of a complex Hilbert space is conjugate linear in its second argument.
One of the reason why we have to consider complex (and not real) spaces is because we will use below a condition involving the spectral elements of the operator $A$, we will explain how to deal with real Banach spaces in practice at the end of this section in Remark \ref{rem complexification}.

Let us now consider the evolution problem associated with the pair $(A,B)$, i.e.
\begin{equation}\label{abst syst}
\begin{dcases}
\ddt y(t)= Ay(t)+B u(t), \quad t \in (0,T), \\
y(0)=y^0,
\end{dcases}
\end{equation}
where $T>0$, $y(t)$ is the state at time $t$, $y^0$ is the initial data and $u(t)$ is the control at time $t$.

Let us recall a standard procedure to define a notion of solution in $H$ to \eqref{abst syst} for non smooth functions.
We formally multiply \eqref{abst syst} by a smooth function $z$, integrate over an arbitrary time interval $(0,\tau) \subset (0,T)$, perform an integraton by parts and use the adjoints to obtain the identity
$$
\ps{y(\tau)}{z(\tau)}{H}
-\ps{y^0}{z(0)}{H}
+\int_0^\tau \ps{y(t)}{-\ddt z(t)-A^*z(t)}{H} \, dt
=\int_0^\tau \ps{u(t)}{B^*z(t)}{U} \, dt.
$$
Particularizing this identity for the solution $z$ to the so-called adjoint system
\begin{equation}\label{adj syst}
\begin{dcases}
-\ddt z(t) =A^*z(t), \quad t \in (0,\tau), \\
z(\tau)=z^1,
\end{dcases}
\end{equation}
i.e. $z(t) = S(\tau-t)^*z^1$, where $z^1$ is arbitrary, this leads to the following notion of solution in $H$:

\begin{definition}\label{def sol abst}
Let $T>0$, $y^0 \in H$ and $u \in L^2(0,T;U)$ be fixed.
We say that a function $y:[0,T] \longrightarrow H$ is a solution to \eqref{abst syst} if $y \in C^0([0,T];H)$ and
\begin{equation}\label{def sol}
\ps{y(\tau)}{z^1}{H}
-\ps{y^0}{z(0)}{H}
=\int_0^\tau \ps{u(t)}{B^*z(t)}{U} \, dt,
\end{equation}
for every $\tau \in (0,T]$ and $z^1 \in \dom{A^*}$, where $z \in C^0([0,\tau];\dom{A^*})$ is the solution to the adjoint system \eqref{adj syst}.
\end{definition}

For the system \eqref{abst syst} to be well posed in this sense, the space $H$ has to satisfy some properties.

\begin{definition}\label{def admi}
We say that $H$ is an admissible subspace for the system $(A,B)$ if the following regularity property holds:
\begin{equation}\label{estim admis}
\forall \tau>0, \, \exists C>0, \quad \int_0^\tau \norm{B^*z(t)}_U^2 \, dt \leq C \norm{z^1}_H^2,
\quad \forall z^1 \in \dom{A^*},
\end{equation}
where $z \in C^0([0,\tau];\dom{A^*})$ is the solution to the adjoint system \eqref{adj syst}.

\end{definition}

We recall that, thanks to basic semigroup properties, it is equivalent to prove \eqref{estim admis} for one single $\tau>0$.

If $H$ is an admissible subspace for $(A,B)$, then the map
$$z^1 \in \dom{A^*} \mapsto \int_0^\tau \ps{u(t)}{B^*z(t)}{U} \, dt,$$
can be extended to a continuous conjugate linear form on $H$.
Thus, we have a natural definition for the map $\tau \in [0,T] \longmapsto y(\tau) \in H$ through the formula \eqref{def sol}.
It can be proved that this map is also continuous and that it depends continuously on $y^0$ and $u$ on compact time intervals (see e.g. \cite[Theorem 2.37]{Cor07}).
This establishes the so-called well-posedness of the abstract control system \eqref{abst syst} in $H$.

Now that we have a notion of continuous solution for the system \eqref{abst syst} in the space $H$, we can speak of its controllability properties in $H$.

\begin{definition}\label{def NC}
We say that the system \eqref{abst syst} is null controllable in time $T$ if, for every $y^0 \in H$, there exists $u \in L^2(0,T;U)$ such that the corresponding solution $y \in C^0([0,T];H)$ to the system \eqref{abst syst} satisfies
$$y(T)=0.$$
\end{definition}

It is also well known that controllability has a dual concept named observability.
We have the following characterization (see e.g. \cite[Theorem 2.44]{Cor07}):

\begin{theorem}\label{thm duality}
Let $T>0$ be fixed.
The system $(A,B)$ is null controllable in time $T$ if, and only if, there exists $C>0$ such that, for every $z^1 \in \dom{A^*}$,
$$\norm{z(0)}_H^2 \leq C\int_0^T \norm{B^*z(t)}_U^2 \, dt,$$
where $z \in C^0([0,T];\dom{A^*})$ is the solution to the adjoint system \eqref{adj syst} (with $\tau=T$).
\end{theorem}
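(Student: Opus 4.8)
The plan is to prove both implications at once via the Hilbert Uniqueness Method, reducing the statement to a classical range-inclusion lemma for bounded operators between Hilbert spaces. First I would introduce the input map $\mathcal{B}_T:L^2(0,T;U) \longrightarrow H$ associated with the pair $(A,B)$: formally $\mathcal{B}_T u=\int_0^T S(T-t)Bu(t)\,dt$, rigorously it is the operator defined by
\[
\ps{\mathcal{B}_T u}{z^1}{H}=\int_0^T \ps{u(t)}{B^*z(t)}{U}\,dt, \quad \forall z^1 \in \dom{A^*},
\]
where $z(t)=S(T-t)^*z^1$ is the solution to the adjoint system \eqref{adj syst} with $\tau=T$. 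That the right-hand side indeed defines a bounded conjugate-linear form on $H$, hence a genuine bounded operator $\mathcal{B}_T \in \lin{L^2(0,T;U),H}$, is exactly the content of the admissibility assumption \eqref{estim admis}. With this notation, the solution to \eqref{abst syst} in the sense of Definition \ref{def sol abst} satisfies $y(T)=S(T)y^0+\mathcal{B}_T u$, so that the system $(A,B)$ is null controllable in time $T$ if and only if $-S(T)y^0 \in \range \mathcal{B}_T$ for every $y^0\in H$, that is, if and only if $\range S(T) \subseteq \range \mathcal{B}_T$.

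Next I would invoke the classical lemma (see e.g. \cite{Cor07}): given bounded linear operators $F \in \lin{H,H}$ and $\mathcal{B}\in\lin{L^2(0,T;U),H}$, one has $\range F \subseteq \range \mathcal{B}$ if and only if there exists $C>0$ such that
\[
\norm{F^* \zeta}_H^2 \leq C\,\norm{\mathcal{B}^* \zeta}_{L^2(0,T;U)}^2, \quad \forall \zeta \in H.
\]
Applying this with $F=S(T)$ and $\mathcal{B}=\mathcal{B}_T$, it only remains to compute the two adjoints. On the one hand $F^*=S(T)^*$, and $\norm{S(T)^*z^1}_H=\norm{z(0)}_H$ because $z(0)=S(T)^*z^1$ is precisely the value at time $0$ of the solution to \eqref{adj syst} (with $\tau=T$). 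On the other hand, reading off the defining identity of $\mathcal{B}_T$ shows that $(\mathcal{B}_T^* z^1)(t)=B^*z(t)$, whence $\norm{\mathcal{B}_T^* z^1}_{L^2(0,T;U)}^2=\int_0^T \norm{B^*z(t)}_U^2\,dt$. Substituting these expressions into the lemma yields exactly the asserted observability inequality, at first for all $z^1 \in H$.

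Finally, I would observe that it is harmless to require the inequality only for $z^1 \in \dom{A^*}$: by \eqref{estim admis} the map $z^1 \mapsto (t\mapsto B^*z(t))$ extends from $\dom{A^*}$ to a bounded operator $H \longrightarrow L^2(0,T;U)$, and $z^1 \mapsto z(0)=S(T)^*z^1$ is bounded on $H$, so both sides of the inequality are continuous in $z^1 \in H$; since $\dom{A^*}$ is dense in $H$, the estimate on $\dom{A^*}$ propagates to all of $H$. The range-inclusion lemma then delivers both directions simultaneously. The only point requiring genuine care is the construction of $\mathcal{B}_T$ together with the verification that the representation $y(T)=S(T)y^0+\mathcal{B}_T u$ is consistent with Definition \ref{def sol abst}; this is exactly where admissibility enters, but it is otherwise routine semigroup theory (cf. \cite[Theorem 2.37]{Cor07}).
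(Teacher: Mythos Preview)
Your argument is correct and is essentially the standard proof via the range-inclusion lemma, which is precisely what the cited reference \cite[Theorem 2.44]{Cor07} does. Note that the paper itself does not prove this theorem: it is stated as a well-known characterization and attributed to \cite{Cor07}, so there is no ``paper's own proof'' to compare against beyond that citation.
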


After these basic reminders, we can now clearly introduce the general compactness-uniqueness result on which the proof of Theorem \ref{thm remove gmm} will rely on.

\begin{theorem}\label{thm compactness}
Let $H$ and $U$ be two complex Hilbert spaces.
Let $A:\dom{A} \subset H \longrightarrow H$ be the generator of a $C_0$-semigroup $(S(t))_{t \geq 0}$ on $H$ and let $B \in \lin{U,\dom{A^*}'}$.
We assume that $H$ is an admissible subspace for $(A,B)$ and that $(A,B)$ satisfies the so-called Fattorini-Hautus test, namely:
\begin{equation}\label{FH test}
\ker(\lambda-A^*) \cap \ker B^*=\ens{0}, \quad \forall \lambda \in \C.
\end{equation}
Assume in addition that there exists $T_0>0$ such that, for every $T>T_0$,	the following two properties hold:
\begin{enumerate}[(i)]
\item\label{cpct hyp 1}
There exist two complex Banach spaces $E_1,E_2$, a compact operator $P: E_1 \longrightarrow E_2$, a linear operator $L: \dom{A^*} \longrightarrow E_1$ and $C>0$ such that, for every $z^1 \in \dom{A^*}$,
\begin{gather}
\norm{z(0)}_H^2
 \leq C\left(\int_0^T \norm{B^*z(t)}_U^2 \, dt+\norm{P L z^1}_{E_2}^2\right), \label{obs ineq plus cpct 1}
\\
\norm{Lz^1}_{E_1}^2 \leq C\left(\norm{z(0)}_H^2+\int_0^T \norm{B^*z(t)}_U^2 \, dt\right), \label{obs ineq plus cpct 2}
\end{gather}
where $z \in C^0([0,T];\dom{A^*})$ is the solution to the adjoint system \eqref{adj syst} (with $\tau=T$).

\item
For every $0<t_1<t_2<T-T_0$, there exists $C>0$ such that, for every $z^1 \in \dom{A^*}$,
\begin{equation}\label{obs ineq plus cpct 3}
\norm{z(t_2)}_H^2\leq C\left(\norm{z(t_1)}_H^2+\int_{t_1}^{t_2} \norm{B^*z(t)}_U^2 \, dt\right),
\end{equation}
where $z \in C^0([0,T];\dom{A^*})$ is the solution to the adjoint system \eqref{adj syst} (with $\tau=T$).

\end{enumerate}
Then, the system $(A,B)$ is null controllable in time $T$ for every $T>T_0$.

\end{theorem}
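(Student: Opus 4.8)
The plan is to pass to the dual problem, prove an observability inequality, and then establish that inequality by a contradiction/compactness argument that isolates a finite-dimensional space of ``invisible'' states which is finally ruled out by the Fattorini--Hautus test \eqref{FH test}.

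Concretely, fix $T>T_0$. By Theorem~\ref{thm duality} it suffices to produce $C>0$ such that
\[
\norm{z(0)}_H^2 \leq C\int_0^T \norm{B^*z(t)}_U^2 \, dt, \qquad \forall z^1 \in \dom{A^*},
\]
where $z(t)=S(T-t)^*z^1$. I would argue by contradiction: if this fails there is a sequence $(z_k^1) \subset \dom{A^*}$ with $\norm{z_k(0)}_H=1$ and $\int_0^T \norm{B^*z_k(t)}_U^2\,dt \to 0$. By \eqref{obs ineq plus cpct 2} the sequence $(Lz_k^1)$ is bounded in $E_1$, so, $P$ being compact, a subsequence of $(PLz_k^1)$ converges strongly in $E_2$. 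Applying \eqref{obs ineq plus cpct 1} to the differences $z_k^1-z_\ell^1$ — whose observation terms tend to $0$ and whose images $PL(z_k^1-z_\ell^1)=PLz_k^1-PLz_\ell^1$ tend to $0$ — yields that $(z_k(0))$ is Cauchy in $H$; hence $z_k(0)\to\psi$ strongly with $\norm{\psi}_H=1$, so $\psi\neq 0$.

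Next I would use the interior estimate \eqref{obs ineq plus cpct 3}, again applied to the differences $z_k-z_\ell$ and iterated between successive time slices, to propagate this convergence over $(0,T-T_0)$: for $t$ in a suitable subinterval the sequence $(z_k(t))$ converges strongly in $H$ to some $z_\infty(t)$, and passing to the limit in the weak formulation \eqref{def sol} shows that $z_\infty$ is a solution of $-\ddt z_\infty=A^*z_\infty$, while the admissibility estimate \eqref{estim admis} for $z_k-z_\ell$ forces $B^*z_k\to B^*z_\infty$ in $L^2$, so $B^*z_\infty\equiv 0$ on that interval; one also checks $z_\infty$ is nonzero using $\norm{z_k(0)}_H=1$ and $z_k(0)=S(\cdot)^*z_k(\cdot)$. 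A further use of \eqref{obs ineq plus cpct 1}--\eqref{obs ineq plus cpct 3} on the set of all such limiting states (its closed unit ball being relatively compact by the same compactness input) shows that they span a finite-dimensional subspace $\mathcal N\subset H$ which is invariant under the adjoint semigroup. On $\mathcal N$ the restriction of $S(\cdot)^*$ is a matrix exponential, so $\sigma\mapsto B^*S(\sigma)^*\psi$ is real-analytic; vanishing on a nontrivial interval, it vanishes for all $\sigma\geq 0$. Hence $\bigcap_{\sigma\geq 0}\ker B^*S(\sigma)^*$ is a nonzero $A^*$-invariant subspace of $\mathcal N$, and an eigenvector $\varphi$ of $A^*$ inside it satisfies $\varphi\neq 0$, $(\lambda-A^*)\varphi=0$, $B^*\varphi=0$ for some $\lambda\in\C$, contradicting \eqref{FH test}. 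This contradiction gives the observability inequality, hence null controllability in time $T$ for every $T>T_0$.

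The main obstacle I anticipate is the limit passage of the previous paragraph: one must combine the three hypotheses carefully to show that the compactness really produces a \emph{nonzero} adjoint trajectory, that it is unobserved on an honest time interval, and — crucially — that the limiting ``defect space'' is finite dimensional and invariant, so that finite-time unobservability can be upgraded to all-time unobservability via analyticity before invoking Fattorini--Hautus. A subtlety is that the final data $z_k^1$ need not remain bounded even though the evolved states $z_k(t)$ do, and that $B^*$ is only $\dom{A^*}'$-valued, so the convergence of the observation terms has to be extracted from the admissibility estimate \eqref{estim admis} rather than from convergence of the states alone.
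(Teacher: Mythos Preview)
Your overall strategy---pass to observability, contradict, use \eqref{obs ineq plus cpct 1}--\eqref{obs ineq plus cpct 2} plus compactness of $P$ to extract a nonzero limit, build a finite-dimensional ``defect space'', and finish with Fattorini--Hautus---is the same as the paper's. The initial compactness extraction (bounded $Lz_k^1$, convergent $PLz_k^1$, Cauchy $z_k(0)$) is exactly what the paper does.

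The difference is in how you pass from ``finite-dimensional defect space'' to an $A^*$-eigenvector annihilated by $B^*$. You want to show $\mathcal N$ is invariant under the adjoint semigroup, then invoke analyticity of $\sigma\mapsto B^*S(\sigma)^*\psi$ on $\mathcal N$ to extend unobservability to all $\sigma$. The paper instead introduces a \emph{family} of spaces $N_\tau$ (limits of $S(\tau)^*z_n^1$ with vanishing observation on $(0,\tau)$), shows they are nested ($N_{\tau_2}\subset N_{\tau_1}$ for $\tau_2\ge\tau_1$), finite-dimensional for $\tau>T_0$, and satisfy $A^*(N_\tau)\subset N_{\tau-\epsilon}$. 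A dimension-stabilization argument then produces some $\tau$ with $N_\tau=N_{\tau-\epsilon}$, so $A^*$ maps $N_\tau$ to itself; an eigenvector of this restriction lies in $N_\tau\subset\ker B^*$ and contradicts \eqref{FH test}. No analyticity is used.

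Your version is viable, but the step you flag as the main obstacle---invariance of $\mathcal N$---is precisely what the paper's nested-family/stabilization device delivers, and it is not automatic from your sketch: a priori one only gets $A^*\psi\in N_{\tau-\epsilon}$, not $A^*\psi\in N_\tau$. The paper also devotes a nontrivial step to proving $N_\tau\subset\dom{A^*}$ (using \eqref{obs ineq plus cpct 3} to bound $S(\tau-\epsilon)^*z_n^1$, extract a limit $g$, write $f=S(\epsilon)^*g$, and use finite-dimensionality of $N_{\tau-\epsilon}$ to show the difference quotients converge in $H$); this is implicit in your ``matrix exponential'' claim and deserves the same care. In short: your endgame via analyticity is a legitimate alternative, but it rests on the same invariance/regularity groundwork that the paper supplies through the $N_\tau$-family and the dimension argument.
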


The proof of this result is postponed to Appendix \ref{sect compactness for NC} for the sake of the presentation.
It is based on arguments developed in the proofs of \cite[Theorem 2]{CN21} and \cite[Lemma 2.6]{DO18} (see also the references therein).
Let us just mention at this point that, in general, the compactness-uniqueness method is designed for the exact controllability property.
It is only thanks to the property \eqref{obs ineq plus cpct 3} that we are able to consider the null controllability property here.

\begin{remark}\label{rem complexification}
In most applications we encounter real systems, that is $H$ and $U$ are real Banach spaces.
To apply what precedes, we have to consider their so-called complexifications as well as the complexifications of the operators $A$ and $B$.
By splitting the complex system (i.e. the system corresponding to these complexifications) into real and imaginary parts, it is not difficult to check that the real system is controllable if, and only if, so is the complex system.
\end{remark}

\subsection{Proof of Theorem \ref{thm remove gmm}}

Let us now show how to use the general result Theorem \ref{thm compactness} in order to obtain Theorem \ref{thm remove gmm}.
We only prove the inequality ``$\leq$'' in \eqref{Tinf Gmm} (which is the most important one), the other inequality can be established similarly.
Let then $T_0>0$ be such that
\begin{equation}\label{syst Gmm zero}
\left(\Lambda,-,Q,
\begin{pmatrix}
0 \\
G_{+-}
\end{pmatrix}
\right)
\end{equation}
is null controllable in time $T_0$ and let us show that necessarily $\Tinf(\Lambda,-,Q,G) \leq T_0$.
This will follow from Theorem \ref{thm compactness} once we will have checked that the system $(\Lambda,-,Q,G)$ satisfies all the assumptions of this result.

First of all, we have to recast the system $(\Lambda,-,Q,G)$ as an abstract evolution system of the form \eqref{abst syst}.
This is quite standard.
To identify what are the operators $A$ and $B$ (in fact, we first find $A^*$ and $B^*$), we repeat the procedure that led to Definition \ref{def sol abst} on the system \eqref{syst gen} (with $M=0$), where taking the adjoints is replaced by an integration by parts in space.
This gives the following.

\begin{itemize}
\item
The state and control spaces are
$$H=L^2(0,1)^n \, (=L^2(0,1;\C^n)), \quad U=\C^m.$$
They are equipped with their usual inner products.

\item
The unbounded linear operator $A: \dom{A} \subset H \longrightarrow H$ is defined for every $y \in \dom{A}$ by
$$
(Ay)(x)=-\Lambda(x) \px{y}(x)+G(x) y_-(0),
\quad x \in (0,1),
$$
with domain
$$\dom{A}
=\ens{y \in H^1(0,1)^n \st y_-(1)=0,\quad y_+(0)=Qy_-(0)}.
$$

\item
It is clear that $\dom{A}$ is dense in $H$ since it contains $C^{\infty}_c(0,1)^n$.
A computation shows that
$$
\dom{A^*}
=\ens{z \in H^1(0,1)^n \st z_-(0)=R^*z_+(0)+\int_0^1 K(\xi)^*z(\xi) \, d\xi, \quad z_+(1)=0},
$$
where $R \in \R^{p \times m}$ and $K \in L^{\infty}(0,1)^{n \times m}$ are defined by
$$
R=-\Lambda_{++}(0)Q\Lambda_{--}(0)^{-1},
\qquad
K(\xi)=-G(\xi)\Lambda_{--}(0)^{-1},
$$
and we have, for every $z \in \dom{A^*}$,
$$
(A^*z)(x)=
\Lambda(x)\px{z}(x)
+\px{\Lambda}(x)z(x),
\quad x \in (0,1).
$$

\item
The control operator $B \in \lin{U,\dom{A^*}'}$ is given for every $u \in U$ and $z \in \dom{A^*}$ by
$$
\ps{Bu}{z}{\dom{A^*}',\dom{A^*}}=\ps{u}{-\Lambda_{--}(1) z_-(1)}{\C^m}.
$$
Note that $B$ is well defined since $Bu$ is continuous on $H^1(0,1)^n$ (by the trace theorem $H^1(0,1)^n \hookrightarrow C^0([0,1])^n$) and since the graph norm $\norm{\cdot}_{\dom{A^*}}$ and $\norm{\cdot}_{H^1(0,1)^n}$ are equivalent norms on $\dom{A^*}$.

\item
Finally, the adjoint $B^* \in \lin{\dom{A^*},U}$ is given for every $z \in \dom{A^*}$ by
$$B^*z=-\Lambda_{--}(1) z_-(1).$$

\end{itemize}

We can prove that $A$ is closed and that both $A,A^*$ are quasi-dissipative, so that $A$ generates a $C_0$-semigroup by a well-known corollary of Lumer-Phillips theorem.

Since the other properties to check depend on the adjoint system, it is convenient to write it explicitly:
$$
\begin{dcases}
\pt{z}(t,x)+\Lambda(x) \px{z}(t,x)=-\px{\Lambda}(x) z(t,x), \\
z_-(t,0)=R^*z_+(t,0)+\int_0^1 K(\xi)^*z(t,\xi) \, d\xi, 
\quad z_+(t,1)=0, \\
z(T,x)=z^1(x).
\end{dcases}
$$

Using the method of characteristics it is easy to prove the estimate \eqref{estim admis} for $\tau \leq T_1(\Lambda)$, which shows that $H$ is an admissible subspace for $(A,B)$.

Therefore, the abstract control system is well posed in $H$.
To rigorously justify that this pair $(A,B)$ is ``the'' abstract form of $(\Lambda,-,Q,G)$ we have to reason in terms of notions of solution:

\begin{proposition}
The solution to system $(\Lambda,-,Q,G)$ in the sense of Definition \ref{def sol char} coincides with the solution to abstract system \eqref{abst syst} in the sense of Definition \ref{def sol abst} corresponding to the pair $(A,B)$ introduced above.
\end{proposition}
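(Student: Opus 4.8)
The plan is to show that the function $y$ furnished by Definition~\ref{def sol char} for the system $(\Lambda,-,Q,G)$ (with $M=0$) satisfies the weak identity \eqref{def sol} that characterizes the abstract solution of \eqref{abst syst}, and then to conclude by uniqueness. Indeed, since $H$ is an admissible subspace for $(A,B)$, the abstract problem \eqref{abst syst} is well posed in $H$, and in particular its solution is unique: if $w\in C^0([0,T];H)$ satisfies \eqref{def sol} with $y^0=0$ and $u=0$, then $\ps{w(\tau)}{z^1}{H}=0$ for every $z^1\in\dom{A^*}$, hence $w(\tau)=0$ by density of $\dom{A^*}$ in $H$. So it is enough to check that the characteristic solution obeys \eqref{def sol}.

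I would first prove this under extra regularity. Assume for the moment that $y^0$ and $u$ are smooth and satisfy the compatibility conditions at the corner $(0,1)$ (for instance $y^0\in C^\infty_c(0,1)^n$ and $u\in C^\infty_c(0,T)^m$), and, temporarily, that $\Lambda$ and $G$ are smooth, so that $y$ is a classical $C^1$ solution. Fix $z^1\in\dom{A^*}$ and let $z$ be the solution of the adjoint system with $\tau=T$, so that $z(t,\cdot)\in H^1(0,1)^n$ for every $t$ and $t\mapsto z(t)$ is $C^1$ into $L^2(0,1)^n$. Computing $\ddt\ps{y(t)}{z(t)}{H}=\int_0^1\bigl(\langle\partial_t y,z\rangle+\langle y,\partial_t z\rangle\bigr)\,dx$, substituting the equation of \eqref{syst gen} (with $M=0$) and the adjoint equation, integrating by parts in $x$, and using that $\Lambda$ is diagonal (hence symmetric) with $\Lambda\in W^{1,\infty}$, one arrives at $\ddt\ps{y(t)}{z(t)}{H}=-[\ps{y(t,\cdot)}{\Lambda(\cdot)z(t,\cdot)}{\C^n}]_{x=0}^{x=1}+\int_0^1\ps{G(x)y_-(t,0)}{z(t,x)}{\C^n}\,dx$. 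At $x=1$, the relations $y_-(t,1)=u(t)$ and $z_+(t,1)=0$ turn the boundary contribution into $\ps{u(t)}{B^*z(t)}{U}$. At $x=0$, inserting $y_+(t,0)=Qy_-(t,0)$ and the nonlocal condition $z_-(t,0)=R^*z_+(t,0)+\int_0^1K(\xi)^*z(t,\xi)\,d\xi$ of $\dom{A^*}$, and using the defining relations $R=-\Lambda_{++}(0)Q\Lambda_{--}(0)^{-1}$ and $K(\xi)=-G(\xi)\Lambda_{--}(0)^{-1}$ --- equivalently $\Lambda_{--}(0)R^*=-Q^*\Lambda_{++}(0)$ and $\Lambda_{--}(0)K(\xi)^*=-G(\xi)^*$ --- the $Q$-part of the boundary condition cancels the term carrying $z_+(t,0)$ while the $K$-part cancels exactly the nonlocal source term. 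Integrating in $t$ over $(0,\tau)$ then gives precisely \eqref{def sol}.

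Finally I would remove the regularity assumptions and conclude. For arbitrary $y^0\in H$ and $u\in L^2(0,T;U)$ one approximates by smooth compatible data; to dispose of the mere Lipschitz regularity of $\Lambda$ and the $L^\infty$ regularity of $G$, one also approximates them by smooth $\Lambda_k$ still satisfying \eqref{hyp speeds bis} and by $G_k$ bounded in $L^\infty$ and convergent in $L^2(0,1)^{n\times m}$. The characteristic solution depends continuously on $(y^0,u,\Lambda,G)$ --- this follows from the contraction estimates behind \eqref{sol char def 1} together with the uniform bound \eqref{estim well posed} --- so the solutions $y_k$ converge to $y$ in $C^0([0,T];H)$, while the adjoint operators, their semigroups and the corresponding $z_k$ converge with admissibility constants that remain bounded, so that $z_k\to z$ in $C^0([0,T];H)$. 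Passing to the limit in the identity established at the regular level then yields \eqref{def sol} in full generality, and together with the uniqueness noted above this proves the proposition. The main obstacle is the bookkeeping of the boundary term at $x=0$: one must track the matrix transposes carefully and verify that the particular nonlocal boundary condition defining $\dom{A^*}$ is precisely tuned so that its $z_+(t,0)$ part is absorbed by the $Q$-coupling and its $K$-part by the nonlocal source $G(x)y_-(t,0)$ --- which is exactly where the formulas for $R$ and $K$ come from; a secondary, more routine, point is making the double approximation (in the data and in the coefficients $\Lambda$ and $G$) rigorous.
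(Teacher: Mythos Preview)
Your overall strategy---verify the weak identity \eqref{def sol} first for regular data via the integration-by-parts computation (which you carried out correctly, including the cancellations at $x=0$ forced by the formulas for $R$ and $K$), then pass to the limit and conclude by uniqueness---is exactly the paper's approach.

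The one point where you diverge is the decision to also approximate the coefficients $\Lambda$ and $G$ by smooth ones. The paper avoids this: it only approximates the data, taking $y^{0,k}\in H^1_0(0,1)^n$ and $u^k\in H^1_0(0,T)^m$, and observes that even with the original Lipschitz $\Lambda$ and $L^\infty$ $G$ the corresponding solution $y^k$ already lies in $C^0([0,T];H^1(0,1)^n)\cap C^0([0,1];H^1(0,T)^n)$, hence in $H^1((0,T)\times(0,1))^n$, which is enough regularity to justify the integration by parts directly. This keeps $A$, $A^*$, $\dom{A^*}$, $R$, $K$, $B^*$ and the adjoint trajectory $z$ fixed throughout the approximation, so the only limit to take is $y^k\to y$ in $C^0([0,T];H)$, which follows immediately from \eqref{estim well posed}.

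Your route, by contrast, changes the operator at each step: $\dom{A_k^*}$ and the adjoint solution $z_k$ depend on $\Lambda_k,G_k$, and you need to argue that $z_k\to z$ in $C^0([0,T];H)$ and that $B_k^*z_k\to B^*z$ in $L^2(0,T;U)$. This is a stability result for the adjoint system with respect to the coefficients; it is plausible and can be made to work (e.g.\ by first extending \eqref{def sol} to all $z^1\in H$ via admissibility and then passing to the limit), but it is not quite as ``routine'' as you suggest, and it is strictly more work than the paper's direct $H^1$-regularity shortcut.
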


\begin{proof}
We argue by approximation.
Let $y^0 \in L^2(0,1)^n$, $u \in L^2(0,T)^m$ be fixed and let $y$ be the corresponding solution to system $(\Lambda,-,Q,G)$ in the sense of Definition \ref{def sol char}.
\begin{itemize}
\item
We take two approximations $(y^{0,k})_k \subset H^1_0(0,1)^n$ and $(u^k)_k \subset H^1_0(0,T)^m$ such that
\begin{equation}\label{approx ID cont}
y^{0,k} \to y^0 \, \text{ in } L^2(0,1)^n,
\quad
u^k \to u \, \text{ in } L^2(0,T)^m.
\end{equation}

Let $y^k$ be the solution corresponding to $y^{0,k}$ and $u^k$ in the sense of Definition \ref{def sol char}.
Since $y^{0,k}$ and $u^k$ obviously satisfy the $C^0$ compatibility conditions
$$
y^{0,k}_-(1)=u^k(0),
\quad
y^{0,k}_+(0)=Qy^{0,k}_-(0),
$$
we can prove that
$$y^k \in C^0([0,T];H^1(0,1)^n) \cap C^0([0,1];H^1(0,T)^n)$$
(for instance, by adapting the fixed point approach of \cite[Appendix A.2]{CHOS21} in the above space -- the regularity $G \in L^{\infty}(0,1)^{n \times m}$ is enough after a suitable change of variable).
In particular, $y^k \in H^1((0,T)\times(0,1))^n$ and it satisfies \eqref{syst gen} almost everywhere (with $M=0$, $y=y^k$, $u=u^k$ and $y^0=y^{0,k}$).

\item
Repeating the procedure that led to Definition \ref{def sol abst} we easily check that $y^k$ is the solution to abstract system \eqref{abst syst} in the sense of Definition \ref{def sol abst}, i.e. it satisfies identity \eqref{def sol} (with $y=y^k$, $y^0=y^{0,k}$ and $u=u^k$).
Using \eqref{approx ID cont} and
$$
y^k \to y \, \text{ in } C^0([0,T];L^2(0,1)^n),
$$
(this follows from \eqref{estim well posed} and \eqref{approx ID cont}), we can pass to the limit $k \to +\infty$ in this identity to obtain that $y$ is the solution to abstract system \eqref{abst syst} in the sense of Definition \ref{def sol abst}.
\end{itemize}
\end{proof}

We will now check that our pair $(A,B)$ satisfies the assumptions of Theorem \ref{thm compactness}.

\begin{itemize}
\item
The Fattorini-Hautus test \eqref{FH test} is easy to check.
Indeed, if $\lambda \in \C$ and $z \in \dom{A^*}$ are such that $A^*z=\lambda z$ and $B^*z=0$, then in particular $z \in H^1(0,1)^n$ solves the system of linear ODEs
$$
\begin{dcases}
\px{z}(x)=\Lambda(x)^{-1}\left(-\px{\Lambda}(x)+\lambda \Id_{\R^{n \times n}}\right)z(x), \quad x \in (0,1), \\
z(1)=0,
\end{dcases}
$$
so that $z=0$ by uniqueness.
\end{itemize}

Below, $C$ denotes a positive number that may change from line to line but that never depends on $z^1$ or $t$.

\begin{itemize}
\item
The inequality \eqref{obs ineq plus cpct 3} is also not difficult to check.
Indeed, for $0<t_1<t_2<T-T_0$, using the method of characteristics, we have
$$
\norm{z_-(t_2,\cdot)}_{L^2(0,1)^m}^2 \leq C\left(
\norm{z_-(t_1,\cdot)}_{L^2(0,1)^m}^2
+\int_{t_1}^{t_2} \norm{z_-(t,1)}^2_{\C^m} \, dt
\right),
$$
and, provided that $T_0 \geq T_{m+1}(\Lambda)$ and using that $z_+(\cdot,1)=0$, we also have
$$
\norm{z_+(t_2,\cdot)}_{L^2(0,1)^p}^2 \leq C
\norm{z_+(t_1,\cdot)}_{L^2(0,1)^p}^2.
$$
We recall that, since the system \eqref{syst Gmm zero} is null controllable in time $T_0$ by assumption, we necessarily have $T_0 \geq T_{m+1}(\Lambda)$ (see the first step of the proof of Theorem \ref{thm NC}).

\item
Let us now investigate the estimate \eqref{obs ineq plus cpct 1}.
Let $T>T_0$.
We will prove that there exists $H \in L^{\infty}((0,T)\times(0,T))^{m \times m}$ such that, for every $z^1 \in L^2(0,1)^n$,
\begin{equation}\label{obs ineq plus cpct 1 concrete}
\norm{z(0,\cdot)}_{L^2(0,1)^n}^2 \leq C\left(
\int_0^T \norm{z_-(t,1)}_{\C^m}^2 \, dt
+\int_0^T \norm{\int_0^T H(t,s)z_-(s,0) \, ds}_{\C^m}^2 \, dt
\right).
\end{equation}

Let us first make some preliminary observations.
We denote by $\zeta$ the solution to the adjoint system of \eqref{syst Gmm zero} in $(0,T)$ with final data $z^1$, and we set
\begin{equation}\label{def theta}
\theta=z-\zeta.
\end{equation}
Clearly, it satisfies
$$
\begin{dcases}
\pt{\theta}(t,x)+\Lambda(x) \px{\theta}(t,x)=-\px{\Lambda}(x) \theta(t,x),
\\
\theta_-(t,0)=R^*\theta_+(t,0)
+\int_0^1 K_{+-}(\xi)^*\theta_+(t,\xi) \, d\xi
+\int_0^1 K_{--}(\xi)^*z_-(t,\xi) \, d\xi,
\quad \theta_+(t,1)=0,
\\
\theta(T,x)=0.
\end{dcases}
$$
Using the method of characteristics, we immediately see that
\begin{equation}\label{theta plus is zero}
\theta_+=0.
\end{equation}
Consequently, $\theta_-$ solves
$$
\begin{dcases}
\pt{\theta_-}(t,x)+\Lambda_{--}(x) \px{\theta_-}(t,x)=-\px{\Lambda_{--}}(x) \theta_-(t,x),
\\
\theta_-(t,0)
=
\int_0^1 K_{--}(\xi)^*z_-(t,\xi) \, d\xi,
\\
\theta_-(T,x)=0.
\end{dcases}
$$
Since $T>T_0 \geq T_m(\Lambda)$, using the method of characteristics, it is not difficult to see that, for $t \in (0,T)$, we have 
\begin{align}
\norm{\theta_-(t,0)}_{\C^m}^2
&= \norm{\int_0^1 K_{--}(\xi)^*z_-(t,\xi) \, d\xi}_{\C^m}^2 \nonumber
\\
&\leq
C\left(
\norm{\int_t^T H(t,s)z_-(s,0) \, ds}_{\C^m}^2
+\int_0^t \norm{z_-(s,1) }_{\C^m}^2 \, ds
\right), \label{estimate theta t zero}
\end{align}
for some $H \in L^{\infty}((0,T)\times(0,T))^{m \times m}$ independent of $z^1$.
Let us now prove the desired estimate \eqref{obs ineq plus cpct 1 concrete}.
Since by assumption the system \eqref{syst Gmm zero} is null controllable in time $T_0$, and thus in time $T>T_0$, the solution $\zeta$ to its adjoint system satisfies (see Theorem \ref{thm duality})
$$
\norm{\zeta(0,\cdot)}_{L^2(0,1)^n}^2
\leq C \int_0^T \norm{\zeta_-(t,1)}_{\C^m}^2.
$$
Recalling \eqref{def theta} and \eqref{theta plus is zero}, it follows that
$$
\begin{array}{rl}
\ds \norm{z(0,\cdot)}_{L^2(0,1)^n}^2
&\ds \leq 2\norm{\theta(0,\cdot)}_{L^2(0,1)^n}^2
+2\norm{\zeta(0,\cdot)}_{L^2(0,1)^n}^2
\\
&\ds =2\norm{\theta_-(0,\cdot)}_{L^2(0,1)^m}^2
+2\norm{\zeta(0,\cdot)}_{L^2(0,1)^n}^2
\\
&\ds \leq 2\norm{\theta_-(0,\cdot)}_{L^2(0,1)^m}^2
+C\int_0^T \norm{\zeta_-(t,1)}_{\C^m}^2
\\
&\ds \leq 2\norm{\theta_-(0,\cdot)}_{L^2(0,1)^m}^2
+2C\int_0^T \norm{\theta_-(t,1)}_{\C^m}^2
+2C\int_0^T \norm{z_-(t,1)}_{\C^m}^2.
\end{array}
$$

On the other hand, using the method of characteristics and the condition $\theta_-(T,\cdot)=0$, we have
$$
\norm{\theta_-(0,\cdot)}_{L^2(0,1)^m}^2
+\int_0^T \norm{\theta_-(t,1)}_{\C^m}^2
\leq
C\int_0^T \norm{\theta_-(t,0)}_{\C^m}^2 \, dt.
$$

Combined with \eqref{estimate theta t zero} this leads to the desired estimate \eqref{obs ineq plus cpct 1 concrete}.

\item
The estimate \eqref{obs ineq plus cpct 1 concrete} suggests to consider the linear operators
$$P:L^2(0,T)^m \longrightarrow L^2(0,T)^m,
\quad
L: \dom{A^*} \longrightarrow L^2(0,T)^m,
$$
defined by
$$
(Pv)(t)=\int_0^T H(t,s)v(s) \, ds,
\quad
(Lz^1)(s)=z_-(s,0).
$$
From the previous point, \eqref{obs ineq plus cpct 1} is fulfilled.
It is also well-known that operators of the form of $P$ are compact.
Finally, we easily check with the method of characteristics that $L$ satisfies the remaining estimate \eqref{obs ineq plus cpct 2}.
This concludes the proof of Theorem \ref{thm remove gmm}.

\end{itemize}

\section{Largest value of the minimal null control time}\label{sect caract supT}

In this last section we will finally prove the second part of Theorem \ref{main thm}.

\subsection{Characterization for systems $(\Lambda,-,Q,G)$}

We start with systems of the form $(\Lambda,-,Q,G)$, we will deal with the initial system $(\Lambda,M,Q,-)$ in the next section.

\begin{theorem}\label{thm SC}
Let $Q^0 \in \R^{p \times m}$ be in canonical form and let $G \in L^{\infty}(0,1)^{n \times m}$ with $G_{--}=0$ and $G_{+-} \in \mathcal{C}(Q^0)$.
\begin{enumerate}[(i)]
\item\label{item upper bound}
The system $(\Lambda,-,Q^0,G)$ is null controllable in time $T$ for every
\begin{equation}\label{cond T supT}
T \geq \max\ens{
\max_{k \in \ens{1,\ldots,\rho_0}}
T_{m+k}(\Lambda)+T_{c_k}(\Lambda),
\quad
T_{m+\rho_0+1}(\Lambda)+T_m(\Lambda)
},
\end{equation}
where we recall that $\rho_0$ is defined in the statement of Theorem \ref{main thm}.

\item\label{upper bound is reached}
Assume that the condition \eqref{CNS Tinf indep M} fails and let $G \in \R^{n \times m}$ be the constant matrix whose entries are all equal to zero except for
$$g_{m+\rho_0+1, m}=1.$$
If the corresponding system $(\Lambda,-,Q^0,G)$ is null controllable in time $T$, then $T$ has to satisfy the condition \eqref{cond T supT}.
\end{enumerate}
\end{theorem}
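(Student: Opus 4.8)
The plan is to prove both statements by unwinding the representation \eqref{sol char def 1} of the solution along the characteristics (with $M=0$), using the entry times $\ssin_i$ and the numbers $T_i(\Lambda)=\phi_i(1)$ of Section \ref{sect caract}. Keep in mind that for a canonical $Q^0$ the boundary condition for $y_+$ reads $y_{m+i}(t,0)=y_{c_k}(t,0)$ when $i=r_k$ and $y_{m+i}(t,0)=0$ otherwise, that $r_k=k$ for $k\in\ens{1,\ldots,\rho_0}$ while $r_k\geq\rho_0+2$ for $k\geq\rho_0+1$ (so the $(m+\rho_0+1)$-th equation never carries a boundary feedback term), and that, by \eqref{order times}, the condition \eqref{cond T supT} yields simultaneously $T-T_{m+k}(\Lambda)\geq T_{c_k}(\Lambda)$ for $k\leq\rho_0$ and $T-T_{m+i}(\Lambda)\geq T_m(\Lambda)$ for $i\geq\rho_0+1$. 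Part \ref{item upper bound} will be obtained by an explicit construction of the control, part \ref{upper bound is reached} by a genericity argument.

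For part \ref{item upper bound}, write $v_j(t)=y_j(t,0)$; once $y^0$ is fixed, each $v_j$ is prescribed on $(0,T_j(\Lambda))$ by the initial data but free on $(T_j(\Lambda),T)$, and (since $G_{--}=0$) $y_j(T,\cdot)=0$ for $j\leq m$ is guaranteed by taking $u_j=0$ on $(T-T_j(\Lambda),T)$. The key observation is that the rows $m+\rho_0+1,\ldots,m+p$ require no control effort: if one takes $u_j\equiv 0$ for every $j\notin\ens{c_1,\ldots,c_{\rho_0}}$, then for any $i\geq\rho_0+1$ both the boundary term and the source term in \eqref{sol char def 1} for $y_{m+i}(T,x)$ only involve such traces (no $v_{c_l}$ with $l\leq\rho_0$ can appear, since $g_{m+i,c_l}=0$ for $i\geq\rho_0+1>r_l$ and $c_l$ feeds no such row), all of which are supported in $(0,T_m(\Lambda))$, whereas $\ssin_{m+i}(T,x)\geq T-T_{m+i}(\Lambda)\geq T_m(\Lambda)$ for $x\in(0,1)$; hence $y_{m+i}(T,\cdot)=0$ automatically. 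It then remains to annihilate $y_{m+1},\ldots,y_{m+\rho_0}$, which is done by a downward cascade: for $k=\rho_0,\rho_0-1,\ldots,1$, the requirement $y_{m+k}(T,\cdot)=0$ reads
\[
v_{c_k}(\sigma)=-\int_\sigma^T\sum_{j} g_{m+k,j}\bigl(\chi_{m+k}(s;\sigma,0)\bigr)\,v_j(s)\,ds,\qquad \sigma\in(T-T_{m+k}(\Lambda),T),
\]
and because $g_{m+k,c_k}=0$ and $g_{m+k,c_{k'}}=0$ for $k'<k$, the right-hand side involves only already-chosen traces ($v_j$ with $j\notin\ens{c_1,\ldots,c_{\rho_0}}$, and $v_{c_{k'}}$ with $k'>k$). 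Since $(T-T_{m+k}(\Lambda),T)\subseteq(T_{c_k}(\Lambda),T)$, this prescribes $v_{c_k}$ on part of its free interval (set it to $0$ on the rest), realized by a suitable $u_{c_k}$ that we also take to vanish on $(T-T_{c_k}(\Lambda),T)$; as $c_k$ with $k\leq\rho_0$ feeds neither the rows $m+i$ ($i\geq\rho_0+1$) nor the source of any $y_{m+k'}$ with $k'<k$, nothing previously achieved is spoilt, and $y(T,\cdot)=0$. (When $\rho_0=p$ there is no bottom block and only the cascade survives, consistently with the convention for \eqref{cond T supT}.)

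For part \ref{upper bound is reached}, the failure of \eqref{CNS Tinf indep M} forces $\rho_0<p$ and $m\notin\ens{c_1,\ldots,c_{\rho_0}}$ (indeed $m=c_{k_0}$ with $k_0\leq\rho_0$ would give $\max_{k\leq\rho_0}T_{m+k}(\Lambda)+T_{c_k}(\Lambda)\geq T_{m+k_0}(\Lambda)+T_m(\Lambda)\geq T_{m+\rho_0+1}(\Lambda)+T_m(\Lambda)$ by \eqref{order times}, i.e. \eqref{CNS Tinf indep M}), from which one checks that the prescribed $G$ has $G_{--}=0$ and $G_{+-}\in\mathcal{C}(Q^0)$, so Theorem \ref{thm SC} applies to it. Assume the resulting system is null controllable in time $T$; by item \ref{thm NC i1} of Theorem \ref{thm NC} already $T\geq\max\ens{T_{m+1}(\Lambda),T_m(\Lambda)}$, so $\ssin_{m+i}(T,\cdot)>0$ on $(0,1)$ for $i\leq\rho_0+1$. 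Fix an arbitrary $y^0$ and a control steering it to $0$. For $k\leq\rho_0$ the $(m+k)$-th equation has no source term (the only nonzero entry of $G$ is $g_{m+\rho_0+1,m}$), so \eqref{sol char def 1} gives $0=y_{m+k}(T,x)=y_{c_k}(\ssin_{m+k}(T,x),0)$ for a.e. $x\in(0,1)$, i.e. $y_{c_k}(\cdot,0)\equiv 0$ on $(T-T_{m+k}(\Lambda),T)$; since $y_{c_k}(\sigma,0)=y^0_{c_k}(\chi_{c_k}(0;\sigma,0))$ for $\sigma<T_{c_k}(\Lambda)$ (here $G_{--}=0$ is used) and $\sigma\mapsto\chi_{c_k}(0;\sigma,0)$ is a $C^1$-diffeomorphism onto a subinterval of $(0,1)$, the inequality $T-T_{m+k}(\Lambda)<T_{c_k}(\Lambda)$ would contradict the arbitrariness of $y^0$, whence $T\geq T_{m+k}(\Lambda)+T_{c_k}(\Lambda)$. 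Likewise, the $(m+\rho_0+1)$-th equation has zero boundary feedback and source $g_{m+\rho_0+1,m}(x)y_m(t,0)=y_m(t,0)$, so $0=y_{m+\rho_0+1}(T,x)=\int_{\ssin_{m+\rho_0+1}(T,x)}^{T}y_m(s,0)\,ds$ for a.e. $x\in(0,1)$, forcing $y_m(\cdot,0)\equiv 0$ on $(T-T_{m+\rho_0+1}(\Lambda),T)$, and the same genericity argument yields $T\geq T_{m+\rho_0+1}(\Lambda)+T_m(\Lambda)$; thus \eqref{cond T supT} holds. The delicate point throughout is the construction in part \ref{item upper bound}: recognizing that the bottom rows cost nothing, so that the whole formula \eqref{cond T supT} comes from the top-block cascade and from the worst feeding of $y_m(\cdot,0)$ into the $(m+\rho_0+1)$-th equation, after which everything reduces to bookkeeping of nested time intervals via \eqref{order times} and \eqref{cond T supT}.
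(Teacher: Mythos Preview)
Your argument is correct and follows essentially the same route as the paper: an explicit construction of the control via the integral representation \eqref{sol char def 1} for item~\ref{item upper bound}, and a necessity argument tracing the initial data through the characteristics for item~\ref{upper bound is reached}. Two small remarks. First, in the consistency check for the cascade you write that $c_k$ does not feed ``the source of any $y_{m+k'}$ with $k'<k$''; this should read $k'>k$ (these are the rows already treated in the downward induction, and indeed $g_{m+k',c_k}=0$ for $k'>k=r_k$ by $G_{+-}\in\mathcal{C}(Q^0)$), whereas $g_{m+k',c_k}$ may well be nonzero for $k'<k$. Second, for item~\ref{upper bound is reached} the paper takes a slight shortcut: once \eqref{CNS Tinf indep M} fails, the condition \eqref{cond T supT} reduces to $T\geq T_{m+\rho_0+1}(\Lambda)+T_m(\Lambda)$ alone, so it suffices to analyze the $2\times 2$ subsystem in $(y_m,y_{m+\rho_0+1})$; your direct argument on the full system establishing also the inequalities $T\geq T_{m+k}(\Lambda)+T_{c_k}(\Lambda)$ for $k\leq\rho_0$ is of course valid but redundant.
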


As for Theorem \ref{main thm}, we use the convention that the undefined quantities are simply not taken into account, which more precisely gives:
\begin{itemize}
\item
If $\rho_0=0$, then the condition \eqref{cond T supT} is $T \geq T_{m+1}(\Lambda)+T_m(\Lambda)$.

\item
If $\rho_0=p$, then the condition \eqref{cond T supT} is $T \geq \max\ens{\max_{k \in \ens{1,\ldots,p}} T_{m+k}(\Lambda)+T_{c_k}(\Lambda), \quad T_m(\Lambda)}$.
\end{itemize}

In the second part of the statement we only discussed the case when \eqref{CNS Tinf indep M} fails since otherwise the time on the right-hand side of the inequality in \eqref{cond T supT} coincides with the time on the right-hand side of the inequality in \eqref{cond T infT} and it follows from item \ref{thm NC i1} of Theorem \ref{thm NC} that item \ref{item upper bound} of Theorem \ref{thm SC} then becomes a necessary and sufficient condition.

This result shows in particular that the largest value that $\Tinf(\Lambda,-,Q^0,G)$ can take with respect to $G \in L^{\infty}(0,1)^{n \times m}$ when $G_{--}=0$ and $G_{+-} \in \mathcal{C}(Q^0)$ is equal to the quantity on the right-hand side of the inequality in \eqref{cond T supT}.
This can be extended to arbitrary boundary coupling matrices and arbitrary $G \in L^{\infty}(0,1)^{n \times m}$ thanks to Proposition \ref{prop reduc Q}, Proposition \ref{prop Gtilde} and Theorem \ref{thm remove gmm}.

\begin{proof}[Proof of Theorem \ref{thm SC}]

\begin{enumerate}[1)]
\item
We begin with the proof of the first item.
Let first $i \in \ens{1,\ldots,m}$ be fixed.
Since $T \geq T_i(\Lambda)$, which means that $\ssin_i(T,x)>0$ for every $x \in (0,1)$ as we have seen in the first step of the proof of Theorem \ref{thm NC}, and since $G_{--}=0$ by assumption, the null controllability condition $y_i(T,\cdot)=0$ is equivalent to (see \eqref{sol char def 1} and \eqref{def bi i<m})
$$u_i\left(\ssin_i(T,\cdot)\right)=0 \quad \text{ in } (0,1).$$
Since $i \leq m$, the map $x \mapsto \ssin_i(T,x)$ is non decreasing (see \eqref{ssin monotonicity}) with $\ssin_i(T,1)=T$.
Thus, the previous condition is also equivalent to
\begin{equation}\label{ui zero}
u_i=0 \quad \text{ in } \left(\ssin_i(T,0), T\right).
\end{equation}

\item
Let us now consider $i \in \ens{m+1,\ldots,n}$.
Since $T \geq T_i(\Lambda)$, the null controllability condition $y_i(T,x)=0$ is equivalent to (see \eqref{sol char def 1} and \eqref{def bi i>m})
$$
a_i(x)+b_i(x)
=0,
$$
where
$$
a_i(x)=
\sum_{\substack{j=1 \\ j \not\in \ens{c_1,\ldots,c_{\rho_0}}}}^m
\left(
q^0_{i-m,j} y_j\left(\ssin_i(T,x),0\right)
+\int_{\ssin_i(T,x)}^{T}  g_{ij}\left(\chi_i(s;T,x)\right) y_j(s,0) \, ds
\right),
$$
and
$$
b_i(x)=
\sum_{k=1}^{\rho_0} 
\left(
q^0_{i-m,c_{k}} y_{c_{k}}\left(\ssin_i(T,x),0\right)
+\int_{\ssin_i(T,x)}^{T} g_{i c_{k}}\left(\chi_i(s;T,x)\right) y_{c_{k}}(s,0) \, ds
\right).
$$

\begin{itemize}
\item
We first consider the case $i \geq m+\rho_0+1$ (which happens only if $\rho_0<p$).
Clearly, we have $b_i=0$ in that situation since $Q^0$ is in canonical form, $G_{+-} \in \mathcal{C}(Q^0)$ and \eqref{r less than rhozero}.
Let us show that we can choose $u_j$ for $j \not\in \ens{c_1,\ldots,c_{\rho_0}}$ so that $a_i=0$ as well.
Since $x \mapsto \ssin_i(T,x)$ is non increasing for $i \geq m+1$ (recall \eqref{ssin monotonicity}), it is sufficient to choose it such that
\begin{equation}\label{cond trace yj}
y_j(\cdot,0)=0 \quad \text{ in } \left(\ssin_i(T,1), T\right).
\end{equation}
Since $T \geq T_{m+\rho_0+1}(\Lambda)+T_m(\Lambda)$ by assumption, we have in particular $T \geq T_i(\Lambda)+T_j(\Lambda)$ for the indices $i,j$ considered (recall \eqref{order times}).
This condition can be written as $T \geq \ssout_i(\ssout_j(0,1), 0)$ (see \eqref{explicit formula ssin} and \eqref{def Ti gen}) or, equivalently (see \eqref{inv sin}),
$$\ssin_j(\ssin_i(T,1), 0) \geq 0.$$
Since $s \mapsto \ssin_j(s,0)$ is increasing (see \eqref{ssin monotonicity}), this is equivalent to
$$\ssin_j(s,0)>0, \quad \forall s \in \left(\ssin_i(T,1), T\right).$$
As a result, we see that \eqref{cond trace yj} holds if, and only if, (see \eqref{sol char def 1}, \eqref{def bi i<m} and recall that $G_{--}=0$)
$$u_j(\ssin_j(\cdot,0))=0 \quad \text{ in } \left(\ssin_i(T,1), T\right).$$
Using again that $s \mapsto \ssin_j(s,0)$ is increasing, this means that
$$u_j=0 \quad \text{ in } \left(\ssin_j(\ssin_i(T,1), 0), \quad \ssin_j(T,0)\right).$$
Observe that this is compatible with \eqref{ui zero} since these two intervals are disjoint.

\item
Let us now consider the case $i \leq m+\rho_0$ (which happens only if $\rho_0 \neq 0$).
Since $Q^0$ is in canonical form, $G_{+-} \in \mathcal{C}(Q^0)$ and \eqref{r less than rhozero}, we see that $a_i(x)+b_i(x)=0$ is equivalent to
\begin{equation}\label{case i<m+rhozero}
a_i(x)+
y_{c_{i-m}}\left(\ssin_i(T,x),0\right)
+\sum_{k=i-m+1}^{\rho_0} \int_{\ssin_i(T,x)}^T g_{i c_{k}}\left(\chi_i(s;T,x)\right) y_{c_{k}}(s,0) \, ds
=0.
\end{equation}
Let us show that we can choose $u_{c_1},\ldots, u_{c_{\rho_0}}$ so that this identity is satisfied.
By assumption, we have $T \geq T_i(\Lambda)+T_{c_{i-m}}(\Lambda)$ for every $i \in \ens{m+1,\ldots,m+\rho_0}$.
As in the previous point we can check that this condition can be written as
$$\ssin_{c_{i-m}}(\ssin_i(T,1), 0) \geq 0.$$
Since $x \mapsto \ssin_{c_{i-m}}(\ssin_i(T,x), 0)$ is decreasing (see \eqref{ssin monotonicity}), this is equivalent to
$$\ssin_{c_{i-m}}(\ssin_i(T,x),0)>0, \quad \forall x \in (0,1).$$
As a result, we see that \eqref{case i<m+rhozero} holds if, and only if, (see \eqref{sol char def 1}, \eqref{def bi i<m} and recall that $G_{--}=0$)
$$
u_{c_{i-m}}\left(\ssin_{c_{i-m}}(\ssin_i(T,x),0)\right)
=-a_i(x)
-\sum_{k=i-m+1}^{\rho_0} \int_{\ssin_i(T,x)}^T g_{i c_{k}}\left(\chi_i(s;T,x)\right) y_{c_{k}}(s,0) \, ds.
$$
Since $a_i$ is known (it only concerns $u_j$ for $j \not\in \ens{c_1,\ldots,c_{\rho_0}}$), we see by induction (starting with $i=m+\rho_0$) that this formula determines the values of $u_{c_{i-m}}$ in the interval
$$
\left(\ssin_{c_{i-m}}(\ssin_i(T,1),0), \quad \ssin_{c_{i-m}}(T,0)\right)
$$
(the map $x \mapsto \ssin_{c_{i-m}}(\ssin_i(T,x),0)$ is non increasing and $\ssin_i(T,0)=T$).
Observe once again that this is compatible with \eqref{ui zero} since these two intervals are disjoint.
\end{itemize}
This concludes the proof of the first item \ref{item upper bound} of Theorem \ref{thm SC}.

\item
Let us now prove item \ref{upper bound is reached} of Theorem \ref{thm SC}.
Assume that the condition \eqref{CNS Tinf indep M} fails, let $G$ be the constant matrix introduced in the statement, and assume that the corresponding system $(\Lambda,-,Q^0,G)$ is null controllable in time $T$.
Since \eqref{CNS Tinf indep M} fails, the condition \eqref{cond T supT} is simply
\begin{equation}\label{cond time 22 subsyst}
T \geq T_{m+\rho_0+1}(\Lambda)+T_m(\Lambda).
\end{equation}
Since the system $(\Lambda,-,Q^0,G)$ is null controllable in time $T$ by assumption, the following $2 \times 2$ subsystem also has to be null controllable in time $T$:
$$
\begin{dcases}
\pt{y_m} (t,x)+\lambda_m(x) \px{y_m}(t,x)=0, \\
\pt{y_{m+\rho_0+1}}(t,x)+\lambda_{m+\rho_0+1}(x) \px{y_{m+\rho_0+1}}(t,x)= y_m(t,0), \\
y_m(t,1)=u_m(t), \quad y_{m+\rho_0+1}(t,0)=q_{\rho_0+1,m}^0 y_m(t,0).
\end{dcases}
$$
Let us show that, whether $q_{\rho_0+1,m}^0=1$ or $q_{\rho_0+1,m}^0=0$, we necessarily have \eqref{cond time 22 subsyst}.
If $q_{\rho_0+1,m}^0=1$, then this follows from item \ref{thm NC i1} of Theorem \ref{thm NC}.
Let us then consider the case $q_{\rho_0+1,m}^0=0$.
As before, it is clearly necessary that $T \geq T_{m+\rho_0+1}(\Lambda)$ and, under this condition, the null controllability condition $y_{m+\rho_0+1}(T,x)=0$ becomes equivalent to (see \eqref{sol char def 1} and \eqref{def bi i>m})
$$\int_{\ssin_{m+\rho_0+1}(T,x)}^T y_m(s,0) \, ds=0.$$
Using the change of variable $\xi \mapsto s=\ssin_{m+\rho_0+1}(T,\xi)$, this holds if, and only if,
$$\int_0^x y_m(\ssin_{m+\rho_0+1}(T,\xi),0) \frac{\partial \ssin_{m+\rho_0+1}}{\partial \xi}(T,\xi) \, d\xi=0.$$
Taking the derivative with respect to $x$, this is also equivalent to
$$y_m(\ssin_{m+\rho_0+1}(T,\cdot),0)=0 \quad \text{ in } (0,1).$$
It is now not difficult to see that we can choose $u_m$ such that this condition holds if, and only if, we have \eqref{cond time 22 subsyst}.

\end{enumerate}

\end{proof}

\subsection{Proof of the second part of Theorem \ref{main thm}}

Let us now show how to combine all the previous results in order to obtain the desired characterization of the largest minimal null control time for the initial system $(\Lambda,M,Q,-)$.

\begin{proof}[Proof of item \ref{item supT} of Theorem \ref{main thm}]
Let $Q \in \R^{p \times m}$ be fixed.
\begin{enumerate}[1)]
\item
\begin{itemize}
\item
By item \ref{item upper bound} of Theorem \ref{thm SC}, we have
$$
\Tinf(\Lambda,-,Q^0,G) \leq
\max\ens{
\max_{k \in \ens{1,\ldots,\rho_0}}
T_{m+k}(\Lambda)+T_{c_k}(\Lambda),
\quad
T_{m+\rho_0+1}(\Lambda)+T_m(\Lambda)
},
$$
for every $G \in L^{\infty}(0,1) ^{n \times m}$ with $G_{--}=0$ and $G_{+-} \in \mathcal{C}(Q^0)$, where $Q^0$ is the canonical form of $Q$.

\item
By Theorem \ref{thm remove gmm}, this inequality remains true for every $G \in L^{\infty}(0,1)^{n \times m}$ with $G_{+-} \in \mathcal{C}(Q^0)$.

\item
By Proposition \ref{prop Gtilde}, this inequality remains true for every $G \in L^{\infty}(0,1)^{n \times m}$.

\item
By Proposition \ref{prop reduc Q}, this inequality remains true by changing $Q^0$ into $Q$.

\item
By Proposition \ref{prop remove diag} and Theorem \ref{thm backstepping}, this inequality remains true for the system $(\Lambda,M,Q,-)$ for any $M \in L^{\infty}(0,1)^{n \times n}$.
\end{itemize}
In summary, we have established the following upper bound:
$$
\Tinf(\Lambda,M,Q) \leq
\max\ens{
\max_{k \in \ens{1,\ldots,\rho_0}}
T_{m+k}(\Lambda)+T_{c_k}(\Lambda),
\quad
T_{m+\rho_0+1}(\Lambda)+T_m(\Lambda)
},
$$
valid for every $M \in L^{\infty}(0,1)^{n \times n}$.

\item
Let us now show that this upper bound is reached for some special $M$.
If the condition \eqref{CNS Tinf indep M} is satisfied, then this upper bound coincides with the lower bound, and we know that this latter is reached for $M=0$ (Remark \ref{rem times are reached}).
Let us now assume that the condition \eqref{CNS Tinf indep M} is not satisfied.

\begin{itemize}
\item
Then, we know from Theorem \ref{thm SC} that this upper bound is the minimal null control time of the system $(\Lambda,-,Q^0,G)$ for the constant matrix $G \in \R^{n \times m}$ whose entries are all equal to zero except for
$$g_{m+\rho_0+1, m}=1.$$
Let us now find the corresponding matrix $M$.

\item
We decompose $Q$ in its canonical form: $LQU=Q^0$.
By Proposition \ref{prop reduc Q}, this upper bound is the minimal null control time of the system $(\Lambda,-,Q,\widehat{G})$ for any $\widehat{G}$ such that $\Theta(\widehat{G})=G$.
Now, it follows from the proof of Proposition \ref{prop reduc Q} that, for constant matrices, $\Theta$ is simply given by
$$
\Theta(\widehat{G})=
\begin{pmatrix}
U^{-1}\widehat{G}_{--}U
\\
L\widehat{G}_{+-}U
\end{pmatrix},
\quad \forall \widehat{G} \in \R^{n \times m}.
$$
Therefore, $\widehat{G}$ is the matrix whose entries are all equal to zero except for
$$\hat{g}_{m+i, m}=\ell^{i,\rho_0+1}, \quad \forall i \in \ens{\rho_0+1,\ldots,p},$$
where $L^{-1}=(\ell^{ij})_{1 \leq i,j \leq p}$.

\item
By Theorem \ref{thm backstepping}, this upper bound is the minimal null control time of the system $(\Lambda,M,Q,-)$ for any $M \in \mathcal{M}$ such that $\Gamma_A(M)=\widehat{G}$ for some $A \in \mathcal{F}$.
Let us determine $A$ and $M$ such that this identity holds.
By definition of $\Gamma_A(M)$ (see \eqref{def GammaA}), this is equivalent to
$$
\begin{dcases}
0=-K_{--}(x,0)\Lambda_{--}(0)-K_{-+}(x,0)\Lambda_{++}(0)Q, \\
\widehat{G}_{+-}=-K_{+-}(x,0)\Lambda_{--}(0)-K_{++}(x,0)\Lambda_{++}(0)Q,
\end{dcases}
$$
where $K$ is the solution to the kernel equations \eqref{kern equ} with additional boundary conditions \eqref{additional conditions 1}-\eqref{additional conditions 2} provided by $A$.
Let us rewrite these kernel equations by blocks:
$$
\begin{dcases}
\Lambda_{--}(x)\px{K_{--}}(x,\xi)
+\pxi{K_{--}}(x,\xi)\Lambda_{--}(\xi)
\\
\quad
+K_{--}(x,\xi)\left(\pxi{\Lambda_{--}}(\xi)+M_{--}(\xi)\right)
+K_{-+}(x,\xi)M_{+-}(\xi)
=0,
\\
\Lambda_{--}(x)K_{--}(x,x)-K_{--}(x,x)\Lambda_{--}(x)=M_{--}(x).
\end{dcases}
$$
$$
\begin{dcases}
\Lambda_{--}(x)\px{K_{-+}}(x,\xi)
+\pxi{K_{-+}}(x,\xi)\Lambda_{++}(\xi)
\\
\quad
+K_{--}(x,\xi)M_{-+}(\xi)
+K_{-+}(x,\xi)\left(\pxi{\Lambda_{++}}(\xi)+M_{++}(\xi)\right)
=0,
\\
\Lambda_{--}(x)K_{-+}(x,x)-K_{-+}(x,x)\Lambda_{++}(x)=M_{-+}(x).
\end{dcases}
$$
$$
\begin{dcases}
\Lambda_{++}(x)\px{K_{+-}}(x,\xi)
+\pxi{K_{+-}}(x,\xi)\Lambda_{--}(\xi)
\\
\quad
+K_{+-}(x,\xi)\left(\pxi{\Lambda_{--}}(\xi)+M_{--}(\xi)\right)
+K_{++}(x,\xi)M_{+-}(\xi)
=0,
\\
\Lambda_{++}(x)K_{+-}(x,x)-K_{+-}(x,x)\Lambda_{--}(x)=M_{+-}(x).
\end{dcases}
$$
$$
\begin{dcases}
\Lambda_{++}(x)\px{K_{++}}(x,\xi)
+\pxi{K_{++}}(x,\xi)\Lambda_{++}(\xi)
\\
\quad
+K_{+-}(x,\xi)M_{-+}(\xi)
+K_{++}(x,\xi)\left(\pxi{\Lambda_{++}}(\xi)+M_{++}(\xi)\right)
=0,
\\
\Lambda_{++}(x)K_{++}(x,x)-K_{++}(x,x)\Lambda_{++}(x)=M_{++}(x).
\end{dcases}
$$
Note that the subsystems satisfied by $(K_{--},K_{-+})$ and $(K_{+-},K_{++})$ are not coupled.
By uniqueness of the solution to these equations (see Theorem \ref{thm kern}), we see that
\begin{gather*}
M_{-+}=0 \quad \Longrightarrow \quad K_{-+}=0,
\\
M_{++}=A_{++}=0, \quad M_{-+}=0 \quad \Longrightarrow \quad K_{++}=0,
\\
M_{--}=A_{--}=0, \quad K_{-+}=0 \quad \Longrightarrow \quad K_{--}=0.
\end{gather*}
Therefore, it only remains to determine $M_{+-}$ such that
$$K_{+-}(x,0)=-\widehat{G}_{+-}\Lambda_{--}(0)^{-1}.$$
Let $i \in \ens{m+1,\ldots,n}$ and $j \in \ens{1,\ldots,m}$ be fixed.
The equation for $k_{ij}$ is now simply
\begin{equation}\label{kern equ pm}
\begin{dcases}
\lambda_i(x) \px{k_{ij}}(x,\xi)
+\pxi{k_{ij}}(x,\xi)\lambda_j(\xi)
+k_{ij}(x,\xi)\pxi{\lambda_j}(\xi)
=0,
\\
k_{ij}(x,x)=\frac{m_{ij}(x)}{\lambda_i(x)-\lambda_j(x)}.
\end{dcases}
\end{equation}

Let $s \mapsto \zeta_{ij}(s;x,\xi)$ be the associated characteristic passing through $(x,\xi)$:
$$
\begin{dcases}
\pas{\zeta_{ij}}(s;x,\xi)= \frac{\lambda_j(\zeta_{ij}(s;x,\xi))}{\lambda_i(s)}, \\
\zeta_{ij}(x;x,\xi)=\xi.
\end{dcases}
$$
The solution to \eqref{kern equ pm} is explicit:
$$
k_{ij}(x,\xi)=
\frac{m_{ij}(\ssin_{ij}(x,\xi))}{\lambda_i(\ssin_{ij}(x,\xi))-\lambda_j(\ssin_{ij}(x,\xi))}
\frac{\lambda_j(\ssin_{ij}(x,\xi))}{\lambda_j(\xi)},
$$
where $\ssin_{ij}(x,\xi) \in (0,x)$ is the unique solution to
$$\zeta_{ij}\left(\ssin_{ij}(x,\xi);x,\xi\right)=\ssin_{ij}(x,\xi).$$
Thus, the desired condition $k_{ij}(\cdot,0)=-\hat{g}_{ij}/\lambda_j(0)$ is equivalent to
$$
m_{ij}(\ssin_{ij}(x,0))=
\frac{\lambda_i(\ssin_{ij}(x,0))-\lambda_j(\ssin_{ij}(x,0))}{-\lambda_j(\ssin_{ij}(x,0))}
\hat{g}_{ij},
\quad x \in (0,1).
$$

\end{itemize}

\end{enumerate}
\end{proof}

\section*{Acknowledgements}

This project was supported by National Natural Science Foundation of China (Nos. 12122110 and 12071258), the Young Scholars Program of Shandong University (No. 2016WLJH52) and National Science Centre, Poland UMO-2020/39/D/ST1/01136.
For the purpose of Open Access, the authors have applied a CC-BY public copyright licence to any Author Accepted Manuscript (AAM) version arising from this submission.

\appendix

\section{An example of non equivalent hyperbolic systems}\label{sect counterexample}

In this appendix we present an explicit example of hyperbolic systems which are not equivalent in the sense of Definition \ref{def syst equiv}.
This example is important to illustrate that, in general, it is not possible to obtain a simpler system than the one we obtained in the present article if we only use invertible transformations (see Remark \ref{rem canonical form}).
It also motivates the use of the compactness-uniqueness method to establish the important result Theorem \ref{thm remove gmm}.
We refer to \cite[Section 4.3]{CN19} for a close but different example.

We consider the following simple $3 \times 3$ systems with constant coefficients:
\begin{equation}\label{counterex g--}
\begin{dcases}
\pt{y_1}(t,x)-\px{y_1}(t,x)=0, \\
\pt{y_2}(t,x)-\frac{1}{2}\px{y_2}(t,x)=a y_1(t,0), \\
\pt{y_3}(t,x)+\px{y_3}(t,x)=by_2(t,0),
\end{dcases}
\end{equation}
where $a,b \in \R$ are some parameters, and with boundary conditions
\begin{equation}\label{counterex g-- BC}
\begin{dcases}
y_1(t,1)=u_1(t), \\
y_2(t,1)=u_2(t),
\end{dcases}
\quad y_3(t,0)=y_1(t,0).
\end{equation}
We are in the case $m=2$, $p=1$ and the matrices $\Lambda,G$ and $Q$ are
$$
\Lambda=
\left(\begin{array}{cc|c}
-1 & 0 & 0 \\
0 & -1/2 & 0 \\
\hline
0 & 0 & 1
\end{array}\right)
, \quad
G=G_{ab}=
\left(\begin{array}{cc}
0 & 0 \\
a & 0 \\
\hline
0 & b
\end{array}\right)
, \quad
Q=
\left(\begin{array}{cc}
1 & 0
\end{array}\right)
.
$$
Note as well that we are in an ideal configuration:
\begin{itemize}
\item
$Q$ is in canonical form.

\item
$(G_{ab})_{--}$ is strictly lower triangular.

\item
$(G_{ab})_{+-} \in \mathcal{C}(Q)$.
\end{itemize}

Clearly, $\rho=\rho_0=1$ and $(r_1,c_1)=(1,1)$.
It follows from the results of the present article (actually, a direct proof is also possible) that the minimal null control time of the system \eqref{counterex g--}-\eqref{counterex g-- BC} is
$$\Tinf(\Lambda,-,Q,G_{ab})=2, \quad \forall a,b \in \R.$$
In particular, the system \eqref{counterex g--}-\eqref{counterex g-- BC} is null controllable in time $T$ for every $T>2$.
Let us now study the null controllability properties of this system in this critical time:

\begin{proposition}\label{prop counterex}
The system \eqref{counterex g--}-\eqref{counterex g-- BC} is null controllable in time $T=2$ if, and only if,
\begin{equation}\label{cns a}
ab \not\in \critset=\ens{-\left(\frac{\pi}{2}+k\pi\right)^2 \st k \in \N}.
\end{equation}
\end{proposition}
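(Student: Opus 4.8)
The plan is to integrate the three transport equations explicitly along their characteristics and to reduce the requirement $y(2,\cdot)=0$ to a single scalar integral equation. Since $T_1(\Lambda)=T_3(\Lambda)=1$ and $T_2(\Lambda)=2$, every characteristic meeting the slice $\{t=2\}$ originates from the lateral boundary, so $y^0_3$ is irrelevant and the question depends only on $y^0_1,y^0_2$. Writing $v=u_1|_{(0,1)}$ for the part of the first control that is not yet constrained, I would first read off from \eqref{sol char def 1} that $y_1(2,\cdot)=0$ forces $u_1\equiv 0$ on $(1,2)$; that the trace $y_1(\cdot,0)$ on $(0,2)$ equals $y^0_1$ on $(0,1)$ and $v(\cdot-1)$ on $(1,2)$; and that $y_2(2,\cdot)=0$ imposes no constraint, as it merely determines $u_2$ in terms of the trace $y_1(\cdot,0)$.

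The next step is to compute the trace $y_2(\cdot,0)$ on $(0,2)$; crucially this does \emph{not} involve $u_2$, because the corresponding characteristics enter through $\{t=0\}$, and one gets $y_2(\sigma,0)=y^0_2(\sigma/2)+a\int_0^\sigma y_1(\rho,0)\,d\rho$. Substituting $y_3(\cdot,0)=y_1(\cdot,0)$ and this expression into the representation of $y_3(2,\cdot)$ and changing variables along the characteristics, the last condition $y_3(2,\cdot)=0$ turns into the linear integral equation
\[
v(t)=G(t)-ab\int_t^1\!\!\int_0^r v(s)\,ds\,dr,\qquad t\in(0,1),
\]
where $G\in H^1(0,1)$ with $G(1)=0$ depends affinely on $(y^0_1,y^0_2)$ and, whenever $b\neq 0$, ranges over \emph{all} such functions as the data vary. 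Hence \eqref{counterex g--}--\eqref{counterex g-- BC} is null controllable in time $2$ if and only if this equation admits a solution $v\in L^2(0,1)$ for every admissible $G$.

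To finish, I would analyse the solvability of the above equation. Its kernel is $k(t,s)=1-\max\{t,s\}$, which is symmetric, so the equation reads $(\Id+ab\,\mathcal{K})v=G$ with $\mathcal{K}$ compact self-adjoint on $L^2(0,1)$. Differentiating the homogeneous equation twice and reading off the boundary values shows that $v\mapsto (v + ab\,\mathcal{K}v)''$ annihilates $v$ precisely when $v$ solves the Sturm--Liouville problem $v''-ab\,v=0$ on $(0,1)$ with $v'(0)=0$ and $v(1)=0$ (and one checks the converse, so the dimensions match); such solutions are the multiples of $\cos(\sqrt{-ab}\,t)$ and exist precisely when $-ab=(\pi/2+k\pi)^2$ for some $k$, i.e.\ when $ab\in\critset$. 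Thus, if $ab\notin\critset$ then $\Id+ab\,\mathcal{K}$ is invertible and the equation is uniquely solvable for every $G$, giving null controllability in time $2$; while if $ab\in\critset$ then $ab<0$, hence $b\neq 0$, so one may choose $y^0_1,y^0_2$ making $G$ non-orthogonal to $\cos(\sqrt{-ab}\,t)$---e.g.\ $G(t)=1-t$, for which $\int_0^1 (1-t)\cos(\sqrt{-ab}\,t)\,dt = (-ab)^{-1}\neq 0$---and the Fredholm alternative shows the equation has no solution, so the system fails to be null controllable in time $2$.

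The routine but delicate part is the bookkeeping with the characteristics: one must verify that the sub-intervals of $(0,2)$ on which $u_1$ and the various traces are prescribed are pairwise disjoint, so that the three final-time conditions are genuinely independent and do not over-determine the controls, and that the affine map $(y^0_1,y^0_2)\mapsto G$ is onto $\{G\in H^1(0,1):G(1)=0\}$ when $b\neq 0$, which is exactly the regime needed for the ``only if'' direction. The translation between the integral equation and the boundary-value problem---in particular obtaining the boundary conditions $v'(0)=0$, $v(1)=0$ with the correct signs---is the single place where an error is easy to commit and must be handled carefully.
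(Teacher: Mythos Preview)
Your proposal is correct and follows the same overall strategy as the paper: integrate along characteristics, observe that $y_1(2,\cdot)=0$ fixes $u_1$ on $(1,2)$ and $y_2(2,\cdot)=0$ determines $u_2$, then reduce $y_3(2,\cdot)=0$ to a scalar Fredholm equation of the second kind for $u_1|_{(0,1)}$, whose solvability is governed by the Sturm--Liouville problem $v''-ab\,v=0$, $v'(0)=0$, $v(1)=0$.

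The one genuine difference is in how the Fredholm analysis is organized. The paper rewrites the integral equation as a $2\times 2$ Volterra-type system for the pair $(u_1,\int_0^{\cdot}u_1)$, computes the adjoint of that system, and identifies its kernel with the Sturm--Liouville problem. You instead observe directly that the iterated integral operator has the symmetric kernel $k(t,s)=1-\max\{t,s\}$, so the equation is $(\Id+ab\,\mathcal{K})v=G$ with $\mathcal{K}$ compact and self-adjoint on $L^2(0,1)$; the kernel of the operator and of its adjoint then coincide, and two differentiations (plus the boundary readings $v'(0)=0$, $v(1)=0$) give the same eigenvalue problem. This is a bit more economical than the paper's $2\times 2$ detour, and the self-adjointness makes the orthogonality condition in the Fredholm alternative immediate. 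Your explicit choice $G(t)=1-t$ for the failure case, with the computation $\int_0^1(1-t)\cos(\sqrt{-ab}\,t)\,dt=(-ab)^{-1}$, is cleaner than the paper's existence argument via $C^\infty_c$ density. Note that for the ``only if'' direction you do not actually need the full surjectivity of $(y^0_1,y^0_2)\mapsto G$ onto $\{G\in H^1:G(1)=0\}$ that you announce; it suffices that $G(t)=1-t$ is realized, which follows by taking $y^0_1=0$ and $y^0_2(x)=-1/b$ on $(1/2,1)$.
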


\begin{remark}
It follows from this result and Proposition \ref{basic prop equiv systs} that
$$
(\Lambda,-,Q,G_{ab})
\quad \text{ is not equivalent to } \quad
(\Lambda,-,Q,G_{cd}), \quad \text{ if } ab \in \critset, \, cd \not\in \critset.
$$
In particular, it is not possible to transform the system $(\Lambda,-,Q,G_{ab})$ into $(\Lambda,-,Q,G_{0b})$ or $(\Lambda,-,Q,G_{a0})$ when $ab \in \critset$ (in other words, we cannot remove $(G_{ab})_{--}$ nor $(G_{ab})_{+-}$ in this case).
\end{remark}

\begin{proof}[Proof of Proposition \ref{prop counterex}]
The solution to the system \eqref{counterex g--}-\eqref{counterex g-- BC} is explicit (see Section \ref{sect sol char}):
\begin{gather*}
y_1(t,x)=
\begin{dcases}
u_1(t-1+x) & \text{ if } t-1+x>0, \\
y_1^0(t+x) & \text{ if } t-1+x<0,
\end{dcases}
\\
y_2(t,x)=
\begin{dcases}
u_2(t-2(1-x))
+a \int_{t-2(1-x)}^t y_1(s,0) \, ds
& \text{ if } t-2(1-x)>0, \\
y_2^0\left(\frac{t}{2}+x\right)
+a \int_0^t y_1(s,0) \, ds
& \text{ if } t-2(1-x)<0,
\end{dcases}
\\
y_3(t,x)=
\begin{dcases}
y_1(t-x,0)
+b\int_{t-x}^t y_2(s,0) \, ds
& \text{ if } t-x>0, \\
y_3^0(-t+x)
+ b\int_0^t y_2(s,0) \, ds
& \text{ if } t-x<0.
\end{dcases}
\end{gather*}
Clearly, the null controllability condition $y_1(2,\cdot)=0$ is satisfied if, and only if,
$$u_1=0 \quad \text{ in } (1,2).$$
Similarly, the null controllability condition $y_2(2,\cdot)=0$ holds if, and only if,
$$u_2(t)=-a\int_t^2 y_1(s,0) \, ds, \quad t \in (0,2).$$
Thus, the control $u_2$ is uniquely determined once the values of the control $u_1$ in $(0,1)$ are known.
The remaining condition $y_3(2,x)=0$ is equivalent to
$$y_1(2-x,0)+b\int_{2-x}^2 y_2(s,0) \, ds=0,$$
and thus to
$$
u_1(1-x)
+b\int_{2-x}^2 y_2^0\left(\frac{s}{2}\right) \, ds
+ab x \int_0^1 y_1^0(\theta) \, d\theta
+ab \int_{2-x}^2 \int_1^s u_1(\theta-1) \, d\theta ds
=0.
$$
Using the change of variables $t=1-x$ and $\sigma=\theta-1$, this is also equivalent to
\begin{equation}\label{int equ for uone}
u_1(t)+ab \int_t^1 \int_0^s u_1(\sigma) \, d\sigma ds=f(t), \quad t \in (0,1),
\end{equation}
where we introduced the following function depending only on the initial data:
\begin{equation}\label{def f counterex}
f(t)=
-b \int_{1+t}^2  y_2^0\left(\frac{s}{2}\right) \, ds
-ab (1-t) \int_0^1 y_1^0(\theta) \, d\theta.
\end{equation}
This identity can be rewritten as
\begin{equation}\label{compact form}
(\Id-K) \begin{pmatrix} u_1 \\ \int_0^{\cdot} u_1(\sigma) \, d\sigma \end{pmatrix}
=\begin{pmatrix} f \\ 0 \end{pmatrix},
\end{equation}
where $K:L^2(0,1)^2 \longrightarrow L^2(0,1)^2$ is the operator defined by
$$
\left(K\begin{pmatrix} \alpha \\ \beta \end{pmatrix}\right)(t)
=\begin{pmatrix} -ab \int_t^1 \beta(s) \, ds \\ \int_0^t \alpha(s) \, ds \end{pmatrix}.
$$
Since $K$ is compact, the Fredholm alternative says that \eqref{compact form} has a solution if, and only if,
\begin{equation}\label{cns exist sol}
\begin{pmatrix} f \\ 0 \end{pmatrix} \in (\ker(\Id-K^*))^\perp.
\end{equation}
A simple computation shows that
$$
\left(K^*\begin{pmatrix} \tilde{\alpha} \\ \tilde{\beta} \end{pmatrix}\right)(s)
=\begin{pmatrix} \int_s^1 \tilde{\beta}(t) \, dt \\ -ab \int_0^s \tilde{\alpha}(t) \, dt \end{pmatrix}.
$$
It follows that $\begin{pmatrix} \tilde{\alpha} \\ \tilde{\beta} \end{pmatrix}
\in \ker(\Id-K^*)$ if, and only if, $\tilde{\beta}(s)=-ab \int_0^s \tilde{\alpha}(t) \, dt$ and $\tilde{\alpha}$ solves the following second order linear ODE:
$$
\begin{dcases}
\tilde{\alpha}''(s)-ab \tilde{\alpha}(s)=0, \\
\tilde{\alpha}'(0)=0, \\
\tilde{\alpha}(1)=0.
\end{dcases}
$$
We can check that this ODE has a nonzero solution if, and only if,
$$ab \in \critset,$$
where $\critset$ is the set introduced in \eqref{cns a}.
It follows that we have two possibilities:
\begin{itemize}
\item
If $ab \not\in \critset$, then $\ker(\Id-K^*)=\ens{0}$ and \eqref{compact form} has a (unique) solution $u_1$.
This shows that the system \eqref{counterex g--}-\eqref{counterex g-- BC} is null controllable in time $T=2$.

\item
If $ab \in \critset$, then there exists a nonzero $\begin{pmatrix} \tilde{\alpha} \\ \tilde{\beta} \end{pmatrix}
\in \ker(\Id-K^*)$.
Necessarily, $\tilde{\alpha} \neq 0$ and thus
$$\exists f \in C^{\infty}_c(0,1), \quad \ps{\tilde{\alpha}}{f}{L^2(0,1)} \neq 0.$$
It is clear that we can construct $y^0_2$ and $y^0_1$ that satisfy \eqref{def f counterex} for this $f$ (take for instance $y_1^0=0$ and $y_2^0(x)=f'(2x-1)/b$ for $x \in [1/2,1]$ and $y_2^0(x)=0$ otherwise, note that $b \neq 0$ in the case considered).
For such a $f$, the condition \eqref{cns exist sol} fails and thus there is no corresponding solution $u_1$ to \eqref{int equ for uone}, meaning that the system \eqref{counterex g--}-\eqref{counterex g-- BC} is not null controllable in time $T=2$.
\end{itemize}
\end{proof}

\begin{remark}
We have seen during the proof that, when $ab \not\in \critset$, the control that brings the solution to zero in the critical time $T=2$ is unique (it can also be written explicitly).
\end{remark}

\section{Proof of the abstract compactness-uniqueness result}\label{sect compactness for NC}

The goal of this appendix is to give a proof of Theorem \ref{thm compactness}.
It is inspired from the proofs of \cite[Theorem 2]{CN21} and \cite[Lemma 2.6]{DO18} (see also the references therein).


Here and in what follows, it will be more convenient to work with the expression $S(t)^*z^1$ rather than $z(t)=S(T-t)^*z^1$.
The corresponding assumptions \eqref{obs ineq plus cpct 1}, \eqref{obs ineq plus cpct 2} and \eqref{obs ineq plus cpct 3} become:
\begin{gather}
\norm{S(T)^*z^1}_H^2
 \leq C\left(\int_0^T \norm{B^*S(t)^*z^1}_U^2 \, dt+\norm{P L z^1}_{E_2}^2\right), \label{obs ineq plus cpct 1 bis}
\\
\norm{Lz^1}_{E_1}^2 \leq C\left(\norm{S(T)^*z^1}_H^2+\int_0^T \norm{B^*S(t)^*z^1}_U^2 \, dt\right), \label{obs ineq plus cpct 2 bis}
\\
\norm{S(T-t_2)^*z^1}_H^2\leq C\left(\norm{S(T-t_1)^*z^1}_H^2+\int_{T-t_2}^{T-t_1} \norm{B^*S(t)^*z^1}_U^2 \, dt\right).
\label{obs ineq plus cpct 3 bis}
\end{gather}

\begin{enumerate}[1)]
\item\label{step 1}
Let $T>T_0$ be fixed.
By duality (see Theorem \ref{thm duality}), we have to prove that there exists $C>0$ such that, for every $z^1 \in \dom{A^*}$,
\begin{equation}\label{obs ineq abst bis}
\norm{S(T)^*z^1}_H^2 \leq C\int_0^T \norm{B^*S(t)^*z^1}_U^2 \, dt.
\end{equation}
We argue by contradiction and assume that the observability inequality \eqref{obs ineq abst bis} does not hold.
Then, there exists a sequence $(z^1_n)_{n \geq 1} \subset \dom{A^*}$ such that, for every $n \geq 1$,
$$\norm{S(T)^*z^1_n}_H^2>n \int_0^T \norm{B^*S(t)^*z^1_n}_U^2 \, dt.$$
In particular $S(T)^*z^1_n \neq 0$ and we can normalize $z^1_n$, still denoted by the same, in such a way that
$$
\norm{S(T)^*z^1_n}_H=1,
\quad
\int_0^T \norm{B^*S(t)^*z^1_n}_U^2 \, dt \xrightarrow[n \to +\infty]{} 0.
$$
Using the estimate \eqref{obs ineq plus cpct 2 bis} we obtain that
$$(Lz^1_n)_{n \geq 1} \text{ is bounded in } E_1.$$
Since $P$ is compact, we can extract a subsequence, still denoted by $(z^1_n)_{n \geq 1}$, such that
$$(PLz^1_n)_{n \geq 1} \text{ converges in } E_2.$$
Using now the estimate \eqref{obs ineq plus cpct 1 bis}, we obtain that $(S(T)^*z^1_n)_{n \geq 1}$ is a Cauchy sequence in $H$, and thus converges: there exists $f \in H$ such that
$$S(T)^*z^1_n \xrightarrow[n \to +\infty]{} f \text{ in } H.$$
Besides, $f \neq 0$ since $\norm{f}_H=1$.
In other words, we have shown that
\begin{equation}\label{NT not zero}
N_T \neq \ens{0},
\end{equation}
where $N_{\tau}$ is the subspace defined for every $\tau>0$ by
$$N_{\tau}=\ens{f \in H \st \exists (z^1_n)_{n \geq 1} \subset \dom{A^*}, \quad
\begin{array}{l}
S(\tau)^*z^1_n \xrightarrow[n \to +\infty]{} f \text{ in } H,
\\
B^*S(\cdot)^*z^1_n \xrightarrow[n \to +\infty]{} 0 \text{ in } L^2(0,\tau;U)
\end{array}
}.$$
Let us now study the properties of these subspaces.

\item
First of all, it forms a non-increasing sequence of subspaces:
\begin{equation}\label{NT decreases}
N_{\tau_2} \subset N_{\tau_1}, \quad \forall \tau_2 \geq \tau_1>0.
\end{equation}
Indeed, if $f \in N_{\tau_2}$ and $(z^1_n)_{n \geq 1} \subset \dom{A^*}$ denotes an associated sequence, then we easily check that $f \in N_{\tau_1}$ by considering the sequence $(S(\tau_2-\tau_1)^*z^1_n)_{n \geq 1}$.

\item
Let us now show that
\begin{equation}\label{NT finite dim}
\dim N_{\tau}<+\infty, \quad \forall \tau>T_0.
\end{equation}
By Riesz theorem, it is equivalent to show that the closed unit ball of $N_{\tau}$ is compact.
Let then $(f^k)_{k \geq 1} \subset N_{\tau}$ be such that $\norm{f^k}_H \leq 1$ for every $k \geq 1$.
Let $(z^{1,k}_n)_{n \geq 1} \subset \dom{A^*}$ be an associated sequence.
In particular, for every $k \geq 1$, there exists $n_k \geq 1$ such that, denoting by $w^{1,k}=z^{1,k}_{n_k}$, we have
$$\norm{S(\tau)^*w^{1,k}-f^k}_H \leq \frac{1}{k}, \quad \norm{B^*S(\cdot)^*w^{1,k}}_{L^2(0,\tau;U)} \leq \frac{1}{k}, \quad \forall k \geq 1.$$
Since $(f^k)_{k \geq 1}$ is bounded, so is $(S(\tau)^*w^{1,k})_{k \geq 1}$.
Using the same reasoning as in Step \ref{step 1}, we deduce from the estimates \eqref{obs ineq plus cpct 2 bis} and \eqref{obs ineq plus cpct 1 bis} that $(S(\tau)^*w^{1,k})_{k \geq 1}$ is a Cauchy sequence.
It follows that $(f^k)_{k \geq 1}$ is a Cauchy sequence as well, and thus converges.

\item
The next step is to establish that
\begin{equation}\label{NT is almost stable by A}
N_{\tau} \subset \dom{A^*}, \quad A^*(N_{\tau}) \subset N_{\tau-\epsilon}, \quad \forall \tau \in (T_0,T), \, \forall \epsilon \in (0,\tau-T_0).
\end{equation}
Let then $f \in N_{\tau}$.
By definition, there exists a sequence $(z^1_n)_{n \geq 1} \subset \dom{A^*}$ such that
\begin{gather}
S(\tau)^*z^1_n \xrightarrow[n \to +\infty]{} f \text{ in } H, \label{CV to f}
\\
B^*S(\cdot)^*z^1_n \xrightarrow[n \to +\infty]{} 0 \text{ in } L^2(0,\tau;U). \label{CV to zero}
\end{gather}
Using the estimate \eqref{obs ineq plus cpct 3 bis} with $t_1=T-\tau$ and $t_2=T-(\tau-\epsilon)$, we see that
$$(S(\tau-\epsilon)^*z^1_n)_{n \geq 1} \text{ is bounded in } H.$$
As before, it follows from the estimates \eqref{obs ineq plus cpct 2 bis} and \eqref{obs ineq plus cpct 1 bis} that there exists $g \in H$ such that
\begin{equation}\label{CV to g}
S(\tau-\epsilon)^*z^1_n \xrightarrow[n \to +\infty]{} g \text{ in } H.
\end{equation}
Noting \eqref{CV to f}, by uniqueness of the limit, we have
$$f=S(\epsilon)^*g.$$
Let us now prove that $g \in \dom{A^*}$.
By definition of the domain of the generator of a semigroup, we have to show that, for any sequence $t_n>0$ with $t_n \to 0$ as $n \to +\infty$, the sequence
$$u_n=\frac{S(t_n)^*g-g}{t_n}$$
converges in $H$ as $n \to +\infty$ and that its limit does not depend on the sequence $(t_n)_n$.
Let $n_0 \geq 1$ be large enough so that $t_n \leq \epsilon$ for every $n \geq n_0$.
From \eqref{CV to g} and \eqref{CV to zero} we easily see that
\begin{equation}\label{stab semi g}
S(t)^*g \in N_{\tau-\epsilon}, \quad \forall t \in [0,\epsilon].
\end{equation}
Thus,
$$u_n \in N_{\tau-\epsilon}, \quad \forall n \geq n_0.$$
Let now $\mu \in \rho(A^*) \neq \emptyset$ be fixed and let us introduce the following norm on $N_{\tau-\epsilon}$:
$$\norm{z}_{-1}=\norm{(\mu-A^*)^{-1}z}_H.$$
Since $(\mu-A^*)^{-1}g \in \dom{A^*}$, we have
\begin{equation}\label{un CV in diff norm}
(\mu-A^*)^{-1}u_n
=\frac{S(t_n)^*-\Id}{t_n}(\mu-A^*)^{-1}g \xrightarrow[n \to +\infty]{}
A^*(\mu-A^*)^{-1}g \quad \mbox{ in } H.
\end{equation}
Therefore, $(u_n)_{n \geq n_0}$ is a Cauchy sequence in $N_{\tau-\epsilon}$ for the norm $\norm{\cdot}_{-1}$.
Since $N_{\tau-\epsilon}$ is finite dimensional (reall \eqref{NT finite dim}), all the norms are equivalent on $N_{\tau-\epsilon}$.
Thus, $(u_n)_{n \geq n_0}$ is a Cauchy sequence for the usual norm $\norm{\cdot}_H$ as well and, as a result, converges for this norm.
It is clear from \eqref{un CV in diff norm} that its limit does not depend on the sequence $(t_n)_n$.
This shows that $g \in \dom{A^*}$ and thus $f=S(\epsilon)^*g \in \dom{A^*}$.
In addition, we have
$$A^*f=A^*S(\epsilon)^*g=\lim_{h \to 0^+} \frac{S(\epsilon)^*g-S(\epsilon-h)^*g}{h} \in N_{\tau-\epsilon} \quad (\text{by \eqref{stab semi g}}).$$
This shows that $A^*(N_\tau) \subset N_{\tau-\epsilon}$.

\item
Let us now prove that
$$N_{\tau} \subset \ker B^*, \quad \forall \tau \in (T_0,T).$$
Let $\epsilon \in (0,\tau-T_0)$ be arbitrary.
We use the same notations as in the previous step.
Since, by assumption, $H$ is an admissible subspace for $(A,B)$ (see Definition \ref{def admi}), the map $z^1 \in \dom{A^*} \mapsto B^*S(\epsilon-\cdot)^*z^1 \in L^2(0,\epsilon;U)$ can be extended to a bounded linear operator $\Psi \in \lin{H,L^2(0,\epsilon;U)}$.
From \eqref{CV to g} and continuity of $\Psi$, we have
$$\Psi S(\tau-\epsilon)^*z^1_n \xrightarrow[n \to +\infty]{} \Psi g, \quad \text{ in } L^2(0,\epsilon;U).$$

Since $z^1_n \in \dom{A^*}$, we have $(\Psi S(\tau-\epsilon)^*z^1_n)(t)=B^*S(\tau-t)z^1_n$ for $t \in (0,\epsilon)$.
From \eqref{CV to zero} and uniqueness of the limit, we deduce that
$$\Psi g=0.$$
Since $g \in \dom{A^*}$, we have $(\Psi g)(t)=B^*S(\epsilon-t)^*g$ and the map $t \in [0,\epsilon] \mapsto B^*S(\epsilon-t)^*g$ is continuous.
It follows that
$$B^*f=B^*S(\epsilon)^*g=(\Psi g)(0)=0.$$

\item
Next, we observe that there exist $\tau \in (T_0,T)$ and $\epsilon \in (0,\tau-T_0)$ such that
\begin{equation}\label{NT inv}
N_{\tau}=N_{\tau-\epsilon}.
\end{equation}
Indeed, from \eqref{NT finite dim} and \eqref{NT decreases}, the sequence of integers $(\dim N_{T-(T-T_0)/k})_{k \geq 2}$ is non-increasing and thus stationary: there exists $k_0 \geq 2$ such that
$$\dim N_{T-(T-T_0)/k}=\dim N_{T-(T-T_0)/k_0}, \quad \forall k \geq k_0.$$
Denoting by $\delta=(T-T_0)/k_0 \in (0,T-T_0)$ and using \eqref{NT decreases}, we then have
$$N_{\tau}=N_{T-\delta}, \quad \forall \tau \in [T-\delta,T).$$
The desired claim easily follows.

\item
Consequently, for $\tau \in (T_0,T)$ and $\epsilon \in (0,\tau-T_0)$ fixed such that \eqref{NT inv} holds, the restriction of $A^*$ to $N_{\tau}$ is a linear operator from the finite dimensional space $N_{\tau}$ into itself (recall \eqref{NT is almost stable by A}).
Besides, $N_{\tau} \neq \ens{0}$ since it contains $N_T$ by \eqref{NT decreases} and we have \eqref{NT not zero}.
Therefore, this restriction has at least one eigenvalue (recall that $H$ is a complex Hilbert space), i.e. there exist $\lambda \in \C$ and a nonzero $\phi \in N_{\tau}$ such that
$$A^* \phi=\lambda \phi.$$
Since $N_{\tau} \subset \ker B^*$, we also have $\phi \in \ker B^*$ and this is a contradiction with the Fattorini-Hautus test \eqref{FH test}.

\end{enumerate}

This concludes the proof of Theorem \ref{thm compactness}.

\begin{remark}
Let us stress that the end of our proof differs from the one in \cite[Section 2.2]{CN21}.
Indeed, in this reference, the conclusion of the proof relied on the fact that the semigroup is nilpotent, that is
$$\exists T>0, \quad S(T)^*z^1=0, \quad \forall z^1 \in H.$$
This readily implies that the operator $A^*$ has no eigenvalues and this is how the authors conclude that $N_T=\ens{0}$.
On the other hand, in our proof above, we only made use of the Fattorini-Hautus test \eqref{FH test} (which is trivially checked if the operator $A^*$ has no eigenvalues).
Besides, this is optimal, in the sense that this test is always a necessary condition for the system $(A,B)$ to be null controllable in some time.

Finally, let us add that for the example of the hyperbolic system $(\Lambda,-,Q,G)$ the corresponding adjoint semigroup is not always nilpotent.
Notably, the strictly lower triangular structure of $G_{--}$ was used at the end of \cite[Section 2.2]{CN21} to prove such a property.
\end{remark}

\bibliographystyle{amsalpha}
\bibliography{biblio}

\providecommand{\bysame}{\leavevmode\hbox to3em{\hrulefill}\thinspace}
\providecommand{\MR}{\relax\ifhmode\unskip\space\fi MR }
\providecommand{\MRhref}[2]{%
  \href{http://www.ams.org/mathscinet-getitem?mr=#1}{#2}
}
\providecommand{\href}[2]{#2}
\begin{thebibliography}{HVDMK19}

\bibitem[AKM21]{AKM21-pre}
Farid Ammar-Khodja and Yacine Mokhtari, \emph{Boundary controllability of two
  coupled wave equations with space-time first-order coupling in 1-d},
  preprint: \url{https://arxiv.org/abs/2104.08960} (2021).

\bibitem[BC16]{BC16}
Georges Bastin and Jean-Michel Coron, \emph{Stability and boundary
  stabilization of 1-{D} hyperbolic systems}, Progress in Nonlinear
  Differential Equations and their Applications, vol.~88,
  Birkh\"auser/Springer, [Cham], 2016, Subseries in Control. \MR{3561145}

\bibitem[Bru70]{Bru70}
Pavol Brunovsk\'y, \emph{A classification of linear controllable systems},
  Kybernetika (Prague) \textbf{6} (1970), 173--188. \MR{0284247}

\bibitem[CHOS21]{CHOS21}
Jean-Michel Coron, Long Hu, Guillaume Olive, and Peipei Shang, \emph{Boundary
  stabilization in finite time of one-dimensional linear hyperbolic balance
  laws with coefficients depending on time and space}, J. Differential
  Equations \textbf{271} (2021), 1109--1170.

\bibitem[CN19]{CN19}
Jean-Michel Coron and Hoai-Minh Nguyen, \emph{Optimal time for the
  controllability of linear hyperbolic systems in one-dimensional space}, SIAM
  J. Control Optim. \textbf{57} (2019), no.~2, 1127--1156. \MR{3932617}

\bibitem[CN20a]{CN20-pre1}
\bysame, \emph{Finite-time stabilization in optimal time of homogeneous
  quasilinear hyperbolic systems in one dimensional space}, preprint:
  \url{https://arxiv.org/abs/2005.13269} (2020).

\bibitem[CN20b]{CN20-pre2}
\bysame, \emph{Lyapunov functions and finite time stabilization in optimal time
  for homogeneous linear and quasilinear hyperbolic systems}, preprint:
  \url{https://arxiv.org/abs/2007.04104} (2020).

\bibitem[CN21a]{CN21}
\bysame, \emph{Null-controllability of linear hyperbolic systems in one
  dimensional space}, Systems Control Lett. \textbf{148} (2021), 104851.
  \MR{4198308}

\bibitem[CN21b]{CN21-pre}
\bysame, \emph{On the optimal controllability time for linear hyperbolic
  systems with time-dependent coefficients}, preprint:
  \url{https://arxiv.org/abs/2103.02653} (2021).

\bibitem[Cor07]{Cor07}
Jean-Michel Coron, \emph{Control and nonlinearity}, Mathematical Surveys and
  Monographs, vol. 136, American Mathematical Society, Providence, RI, 2007.
  \MR{2302744}

\bibitem[CVKB13]{CVKB13}
Jean-Michel Coron, Rafael Vazquez, Miroslav Krstic, and Georges Bastin,
  \emph{Local exponential {$H^2$} stabilization of a {$2\times 2$} quasilinear
  hyperbolic system using backstepping}, SIAM J. Control Optim. \textbf{51}
  (2013), no.~3, 2005--2035. \MR{3049647}

\bibitem[DJM06]{DJM06}
Froil\'{a}n~M. Dopico, Charles~R. Johnson, and Juan~M. Molera, \emph{Multiple
  {LU} factorizations of a singular matrix}, Linear Algebra Appl. \textbf{419}
  (2006), no.~1, 24--36. \MR{2263107}

\bibitem[DO18]{DO18}
Michel Duprez and Guillaume Olive, \emph{Compact perturbations of controlled
  systems}, Math. Control Relat. Fields \textbf{8} (2018), 397--410.

\bibitem[Har02]{Har02}
Philip Hartman, \emph{Ordinary differential equations}, Classics in Applied
  Mathematics, vol.~38, Society for Industrial and Applied Mathematics (SIAM),
  Philadelphia, PA, 2002, Corrected reprint of the second (1982) edition
  [Birkh\"{a}user, Boston, MA; MR0658490 (83e:34002)], With a foreword by Peter
  Bates. \MR{1929104}

\bibitem[HDM15]{HDM15}
Long Hu and Florent Di~Meglio, \emph{Finite-time backstepping boundary
  stabilization of $3 \times 3$ hyperbolic systems}, Proceedings of the
  European Control Conference (ECC), July 2015, pp.~67--72.

\bibitem[HDMVK16]{HDMVK16}
Long Hu, Florent Di~Meglio, Rafael Vazquez, and Miroslav Krstic, \emph{Control
  of homodirectional and general heterodirectional linear coupled hyperbolic
  {PDE}s}, IEEE Trans. Automat. Control \textbf{61} (2016), no.~11, 3301--3314.
  \MR{3571452}

\bibitem[HO20]{HO20-pre}
Long Hu and Guillaume Olive, \emph{Null controllability and finite-time
  stabilization in minimal time of one-dimensional first-order $2 \times 2$
  linear hyperbolic systems}, preprint: \url{https://arxiv.org/abs/2010.15664}
  (2020).

\bibitem[HO21]{HO21}
\bysame, \emph{Minimal time for the exact controllability of one-dimensional
  first-order linear hyperbolic systems by one-sided boundary controls}, J.
  Math. Pures Appl. (9) \textbf{148} (2021), 24--74. \MR{4223348}

\bibitem[Hoc73]{Hoc73}
Harry Hochstadt, \emph{Integral equations}, John Wiley \& Sons, New
  York-London-Sydney, 1973, Pure and Applied Mathematics. \MR{0390680}

\bibitem[Hu15]{Hu15}
Long Hu, \emph{Sharp time estimates for exact boundary controllability of
  quasilinear hyperbolic systems}, SIAM J. Control Optim. \textbf{53} (2015),
  no.~6, 3383--3410. \MR{3425369}

\bibitem[HVDMK19]{HVDMK19}
Long Hu, Rafael Vazquez, Florent Di~Meglio, and Miroslav Krstic, \emph{Boundary
  exponential stabilization of 1-dimensional inhomogeneous quasi-linear
  hyperbolic systems}, SIAM J. Control Optim. \textbf{57} (2019), no.~2,
  963--998. \MR{3924605}

\bibitem[Li10]{Li10}
Tatsien Li, \emph{Controllability and observability for quasilinear hyperbolic
  systems}, AIMS Series on Applied Mathematics, vol.~3, American Institute of
  Mathematical Sciences (AIMS), Springfield, MO; Higher Education Press,
  Beijing, 2010. \MR{2655971}

\bibitem[LR10]{LR10}
Tatsien Li and Bopeng Rao, \emph{Strong (weak) exact controllability and strong
  (weak) exact observability for quasilinear hyperbolic systems}, Chin. Ann.
  Math. Ser. B \textbf{31} (2010), no.~5, 723--742. \MR{2726064}

\bibitem[Rus67]{Rus67}
David~L. Russell, \emph{.{O}n boundary-value controllability of linear
  symmetric hyperbolic systems}, Mathematical {T}heory of {C}ontrol ({P}roc.
  {C}onf., {L}os {A}ngeles, {C}alif., 1967), Academic Press, New York, 1967,
  pp.~312--321. \MR{0258500}

\bibitem[Rus78]{Rus78}
\bysame, \emph{Controllability and stabilizability theory for linear partial
  differential equations: recent progress and open questions}, SIAM Rev.
  \textbf{20} (1978), no.~4, 639--739. \MR{508380}

\bibitem[Wec82]{Wec82}
N.~Weck, \emph{A remark on controllability for symmetric hyperbolic systems in
  one space dimension}, SIAM J. Control Optim. \textbf{20} (1982), no.~1, 1--8.
  \MR{642174}

\end{thebibliography}

\end{document}